\setlist[enumerate]{format=\normalfont}
\newtheorem{theorem}{Theorem}[section]
\newtheorem{prop}[theorem]{Proposition}
\newtheorem{lemma}[theorem]{Lemma}
\newtheorem{definition}[theorem]{Definition}
\newtheorem{cor}[theorem]{Corollary}
\theoremstyle{definition}
\newtheorem{example}[theorem]{Example}
\newtheorem{remark}[theorem]{Remark}
\newtheorem{notation}[theorem]{Notation}
\numberwithin{equation}{section}
\newcommand{\bmc}{\begin{multicols}}
	\newcommand{\emc}{\end{multicols}}
\tikzset{
cvertex/.style={circle,draw=blue,inner sep=1pt,outer sep=3pt},
cvertex1/.style={circle,draw=green,inner sep=1pt,outer sep=3pt},
vertex/.style={circle,fill=black,inner sep=1pt,outer sep=3pt},
DBs/.style={circle,draw=black,circle,fill=black,inner sep=0pt, minimum size=3pt},
DB/.style={circle,draw=black,circle,fill=black,inner sep=0pt, minimum size=4pt},
DWs/.style={circle,draw=black,circle,fill=white,inner sep=0pt, minimum size=3pt},
DWds/.style={circle,draw=black,densely dotted,circle,fill=white,inner sep=0pt, minimum size=3pt},
DWds1/.style={circle,draw=white,fill=pink,inner sep=0pt, minimum size=3pt},
DWds2/.style={circle,draw=white,fill=purple,inner sep=0pt, minimum size=3pt},
DWds3/.style={circle,draw=white,fill=blue,inner sep=0pt, minimum size=3pt},
DWds4/.style={circle,draw=white,fill=red,inner sep=0pt, minimum size=3pt},
DWds5/.style={circle,draw=white,fill=green,inner sep=0pt, minimum size=3pt},
DW/.style={circle,draw=black,inner sep=0pt, minimum size=4pt},
tvertex/.style={inner sep=1pt,font=\scriptsize},
gap/.style={inner sep=0.5pt,fill=white},
Bgap/.style={circle,draw=black,circle,inner sep=-1pt,fill=black},
mid/.style={inner sep=0.5pt},
Ggap/.style={inner sep=0.5pt,fill=green!40!black!20}}
\newcommand{\Afour}[4]{%
\begin{tikzpicture}[baseline=(O.base),scale=0.175]
\node (O) at (0,-0.5) {};
\node at (0,0) [#1] {};
\node at (1,0) [#2] {};
\node[black!60!white] at (2.1,0)  {$..$};
\node at (3.2,0) [#3] {};
\node at (4.2,0) [#4] {};
\end{tikzpicture}	
}
\tikzset{
W/.style={circle,draw=black!60!white,circle,fill=white,inner sep=0pt, minimum size=4pt},
B/.style={circle,draw=black!60!white,circle,fill=black!60!white,inner sep=0pt, minimum size=4pt},
}
\renewcommand{\*}{%
\discretionary {\thinspace\the\textfont2\char1}{}{}%
}
\newcommand{\PB}{\mathrm{PBr}}
\newcommand{\scrA}{\EuScript{A}}
\newcommand{\scrB}{\EuScript{B}}
\newcommand{\scrC}{\EuScript{C}}
\newcommand{\scrH}{\EuScript{H}}
\newcommand{\scrJ}{\EuScript{J}}
\newcommand{\scrK}{\EuScript{K}}
\newcommand{\gen}{\mathbb{I}}
\DeclareMathOperator{\Spec}{Spec}
\begin{document}
	
\title{Pure Braid Group Presentations via Longest Elements}
\author{Caroline Namanya}
\address{Caroline Namanya, Makerere University, Kampala, Uganda \& University of Glasgow,  Glasgow, Scotland 
}
\email{caronamanya97@gmail.com}
\begin{abstract}
This paper gives  a new, simplified presentation of the classical pure braid group. The generators are given by  the squares of the longest elements over connected subgraphs, and we prove that the only relations are either commutators or certain palindromic length 5 box relations. This presentation is motivated by twist functors in algebraic geometry, but the proof is entirely Coxeter-theoretic. We also  prove  that the analogous  set does not  generate for all  Coxeter arrangements, which in particular answers a question of Donovan and Wemyss.
\end{abstract}
\maketitle
\parindent 20pt
\parskip 0pt

\section{Introduction}

The classical pure braid group, equivalently the pure braid group of type \(\mathrm{A}\), is a fundamental object in algebra, geometry and topology.  The purpose of this paper is to give a new, and simplified, presentation of this group using only squares of  longest elements over connected subgraphs, and  to then use this to answer questions motivated from algebraic geometry.   In the process we  place this presentation in the context of other fundamental groups.  Our methods are algebraic, and are  independent of   the geometric motivation. 

\subsection{The new presentation}
As recalled in Definition \ref{01 of 17 may 2022}, the classical pure braid group \( \mathrm{\PB}_{A_{n}}\) is the kernel of the natural surjection  \[  \mathrm{Br}_{n+1} \twoheadrightarrow  \mathfrak{S}_{n+1}, \]  and can also be viewed as \(\uppi_{1}\) of the type \(\mathrm{A}\) (complexified) hyperplane arrangement. Generators typically involve  a choice of looping around hyperplanes. Unfortunately, these choices often lead to non-symmetric and often unpleasant presentations. 

Both the need to give a nice presentation, and our geometric purposes, require a different generating set. Consider the  \(A_{n}\) Dynkin graph numbered
\[ \underset{\mathclap{1}}{ \bullet } - \underset{\mathclap{2}}{\bullet} - \dotsb - \underset{\mathclap{{n-1}}}{\bullet} - \underset{\mathclap{n}}{\bullet}\]   Then  for any  a connected  subgraph \( \scrA \subseteq  A_{n}\), consider \(  \ell_{ \scrA}^{2} \), where  \(\ell_{ \scrA}\) is the longest element over \( \scrA\). The following  is  our first result.
\begin{prop}[\ref{04 of 15 may 2022}]
The set \( \{ \ell_{ \scrA}^{2} \mid  \scrA \subseteq A_{n}, \,\scrA~ \text{connected}~ \}  \) generates \( \mathrm{\PB}_{A_{n}}.\)
\end{prop}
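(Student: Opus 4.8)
The plan is to show that each standard generator of $\PB_{A_n}$ lies in the subgroup $H$ generated by $\{\ell_{\scrA}^2 \mid \scrA \subseteq A_n \text{ connected}\}$, and since these standard generators are known to generate $\PB_{A_n}$, this will suffice. Recall that $\PB_{A_n}$ is generated by the elements $A_{ij}$ for $1 \le i < j \le n+1$, where $A_{ij}$ is the pure braid in which strand $j$ loops around strand $i$ (equivalently, in Coxeter terms, $A_{ij} = (\sigma_{j-1}\sigma_{j-2}\cdots\sigma_{i+1})\,\sigma_i^2\,(\sigma_{i+1}^{-1}\cdots\sigma_{j-1}^{-1})$ for the Artin generators $\sigma_k$). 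The base case is immediate: for a single-vertex subgraph $\scrA = \{k\}$ we have $\ell_{\scrA} = \sigma_k$, so $\ell_{\scrA}^2 = \sigma_k^2 = A_{k,k+1} \in H$, giving all the ``adjacent'' generators.

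Next I would identify what $\ell_{\scrA}^2$ is for an interval $\scrA = \{i, i+1, \ldots, j-1\}$ (a connected subgraph of type $A$ is always an interval). The longest element $\ell_{\scrA}$ of the symmetric group on $\{i, i+1, \ldots, j\}$ is the order-reversing permutation, and a direct computation (or an appeal to the standard fact that the full twist on $m$ strands is the square of the half-twist) shows that $\ell_{\scrA}^2$ is the \emph{full twist} on the strands $i, i+1, \ldots, j$ — that is, the central element of the parabolic braid subgroup $\mathrm{Br}_{\{i,\ldots,j\}}$, which can be written as $\prod_{i \le s < t \le j} A_{st}$ in an appropriate (nested) order. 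The key algebraic input I want is the classical identity expressing the full twist on an interval in terms of full twists on subintervals and the $A_{ij}$; concretely, writing $\delta_{[i,j]} = \ell_{\{i,\ldots,j-1\}}^2$ for the full twist on strands $i$ through $j$, one has a relation of the shape $\delta_{[i,j]} = \delta_{[i,j-1]} \cdot (A_{i,j} A_{i+1,j} \cdots A_{j-1,j})$ up to conjugation/reordering, because adjoining the last strand contributes exactly the generators $A_{sj}$ for $s < j$.

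From such an identity I can extract each $A_{ij}$ by induction on $j - i$. If all $A_{st}$ with $t - s < j - i$ already lie in $H$ (induction hypothesis), and all $\delta_{[*,*]}$ lie in $H$ (being squares of longest elements over intervals, hence in our generating set), then the displayed relation, after moving the known factors to the other side, expresses the product $A_{i,j}A_{i+1,j}\cdots A_{j-1,j}$ — or a single $A_{ij}$ once the others in that product are peeled off by a secondary induction — as an element of $H$. One has to be a little careful about the exact ordering and the conjugating elements in the full-twist identity, so I would set up the induction to pull off the generators one strand-pair at a time, matching $\delta_{[i,j]}\delta_{[i,j-1]}^{-1}$ against the ``last column'' product and then using already-known $\delta_{[i',j]}$ for $i' > i$ to isolate $A_{ij}$ itself.

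The main obstacle is purely bookkeeping: getting the combinatorial identity relating $\ell_{[i,j]}^2$ to the $\ell^2$ of subintervals and the standard generators exactly right, including conjugating elements, so that the induction genuinely closes. This is essentially the statement that the center of a type-$A$ parabolic braid group is generated (inside the ambient pure braid group) by the obvious products of $A_{st}$, combined with the fact that these central elements are precisely our $\ell_{\scrA}^2$. There is no deep difficulty — the full-twist-as-square-of-half-twist fact and the cabling/strand-insertion relations for pure braids do all the work — but care is needed to present it cleanly; alternatively one can avoid explicit formulas entirely by arguing at the level of the hyperplane arrangement / $\uppi_1$, noting that $\ell_{\scrA}^2$ is a loop encircling the subarrangement spanned by $\scrA$ and that such loops over all intervals generate $\uppi_1$ of the complement.
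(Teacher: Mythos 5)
Your proposal is correct, but it proves generation by a genuinely different route from the paper. The paper establishes the closed-form identity $A_{i,j}=\ell_{i,j-2}^{-2}\,\ell_{i,j-1}^{2}\,\ell_{i+1,j-2}^{2}\,\ell_{i+1,j-1}^{-2}$ (Proposition \ref{02 of 15 may 2022}) by a direct braid-word manipulation built on Lemma \ref{09 july 2021}, and generation is then a one-line consequence; crucially, that same four-term formula is reused later to define the homomorphism $\upphi$ in Lemma \ref{lem:well defined} and Theorem \ref{Feb 2022}, so the explicit shape of the expression matters beyond mere generation. You instead induct on $j-i$ using the classical full-twist factorization: with $\updelta$ denoting full twists, $\ell_{[i,j-1]}^{2}=\updelta_{[i,j]}=\updelta_{[i,j-1]}\cdot\bigl(A_{i,j}A_{i+1,j}\cdots A_{j-1,j}\bigr)$, which lets you peel off $A_{i,j}$ since the $A_{s,j}$ with $s>i$ are already in the subgroup by induction. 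This works: the factorization you invoke holds \emph{exactly}, with the last column appearing on the right (e.g. $(\upsigma_1\upsigma_2)^3=A_{12}A_{13}A_{23}$, and inductively $\Updelta_m^2=(A_{12})(A_{13}A_{23})\cdots(A_{1m}\cdots A_{m-1,m})$), so your ``up to conjugation/reordering'' hedge can be removed --- and it must be removed, since a conjugation by an element not known to lie in the subgroup would break the induction. What your route buys is a shorter, more classical argument resting on a standard identity for the full twist; what it gives up is the uniform four-term expression for each $A_{i,j}$, which is the ingredient the rest of the paper actually needs. (Your closing suggestion to argue via $\uppi_1$ of the arrangement complement is only a sketch and would need the same kind of bookkeeping to be made precise.)
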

By  slight abuse of notation, write  $\scrA\colonequals \ell_{\scrA}^2$. Leading to our main result, consider connected subgraphs \(\scrA\) and \(\scrC\) of \(A_{n}\), then by the distance $d(\scrA,\scrC)$ we mean the number of edges between $\scrA$ and $\scrC$. The case \(d(\scrA,\scrC)=2\) corresponds to when  $\scrA$ and $\scrC$  are  precisely one node apart, namely
\[
\begin{tikzpicture}
\draw (0.75,0.25) -- (2.25,0.25)--(2.25,-0.25)--(0.75,-0.25)--cycle;
\draw (2.75,0.25) -- (5.25,0.25)--(5.25,-0.25)--(2.75,-0.25)--cycle;
\node at (1.5,-0.5) {$\scrA$};
\node at (4,-0.5) {$\scrC$};
\filldraw (0,0) circle (2pt);
\filldraw (0.5,0) circle (2pt);
\filldraw (1,0) circle (2pt);
\filldraw (1.5,0) circle (2pt);
\filldraw (2,0) circle (2pt);
\filldraw (2.5,0) circle (2pt);
\filldraw (3,0) circle (2pt);
\filldraw (3.5,0) circle (2pt);
\filldraw (4,0) circle (2pt);
\filldraw (4.5,0) circle (2pt);
\filldraw (5,0) circle (2pt);
\filldraw (5.5,0) circle (2pt);
\end{tikzpicture}		
\]
Given such a pair, we say that  a subgraph  $\scrB$ is compatible  with \( (\scrA, \scrC)\)  if   $\scrB$ is a connected subgraph of the following dotted area,  containing the red node.
\[
\begin{tikzpicture}
\draw[white!60!black] (0.75,0.25) -- (2.25,0.25)--(2.25,-0.25)--(0.75,-0.25)--cycle;
\draw[white!60!black] (2.75,0.25) -- (5.25,0.25)--(5.25,-0.25)--(2.75,-0.25)--cycle;
\draw[densely dotted,line width=0.75pt] (1.25,0.4) -- (4.75,0.4)--(4.75,-0.4)--(1.25,-0.4)--cycle;
\filldraw (0,0) circle (2pt);
\filldraw (0.5,0) circle (2pt);
\filldraw (1,0) circle (2pt);
\filldraw (1.5,0) circle (2pt);
\filldraw (2,0) circle (2pt);
\filldraw[red] (2.5,0) circle (2pt);
\filldraw (3,0) circle (2pt);
\filldraw (3.5,0) circle (2pt);
\filldraw (4,0) circle (2pt);
\filldraw (4.5,0) circle (2pt);
\filldraw (5,0) circle (2pt);
\filldraw (5.5,0) circle (2pt);		
	
\end{tikzpicture}
\]
The following is our main result.
\begin{theorem} [\ref{Feb 2022}]\label{02 02 2022}
The pure braid group \( \mathrm{\PB}_{A_{n}}\)  has a presentation with generators given by connected subgraphs $\scrA \subseteq  A_n$, subject to the relations
\begin{enumerate}
\item  \( \scrA\cdot\scrB = \scrB\cdot \scrA\) if \(d(\scrA, \scrB)\geq 2\), or \(\scrA \subseteq \scrB\), or \(\scrB\subseteq\scrA\).
\item For all $\scrA$ and all $\scrC$ such that $d(\scrA,\scrC)=2$, then
\[
(\scrA\cup\scrB)^{-1}\cdot(\scrA\cdot\scrB\cdot\scrC)\cdot(\scrB\cup\scrC)^{-1}	=
(\scrC\cup\scrB)^{-1}\cdot(\scrC\cdot\scrB\cdot\scrA)\cdot(\scrB\cup\scrA)^{-1}
\]
for all  $\scrB$ with  compatible \((\scrA,\scrC)\).
\end{enumerate}		
\end{theorem}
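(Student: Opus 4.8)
Write $G_n$ for the group abstractly presented as in the statement, with generators $\scrA$ ranging over connected subgraphs of $A_n$. There are two separate tasks: (i) check that relations (1) and (2) actually hold between the elements $\ell_\scrA^2\in\mathrm{PB}_{A_n}$; and (ii) show that they suffice. By Proposition~\ref{04 of 15 may 2022} together with (i), the assignment $\scrA\mapsto\ell_\scrA^2$ extends to a surjection $\phi_n\colon G_n\twoheadrightarrow\mathrm{PB}_{A_n}$, and (ii) is precisely the assertion that $\phi_n$ is injective. I would prove (i) by a finite computation inside parabolic braid subgroups, and (ii) by induction on $n$ via the strand-forgetting fibration.

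\textbf{Verifying the relations.} Relation (1) is essentially Coxeter bookkeeping. If $d(\scrA,\scrB)\ge 2$ then every simple reflection supporting $\ell_\scrA$ commutes with every one supporting $\ell_\scrB$, so $\ell_\scrA$ and $\ell_\scrB$ already commute in $\mathrm{Br}_{n+1}$, hence so do their squares. If $\scrA\subseteq\scrB$ then $\ell_\scrA^2$ lies in the parabolic braid subgroup $\mathrm{Br}_\scrB$, while $\ell_\scrB^2$ is the full twist of $\mathrm{Br}_\scrB$ and is therefore central in it. For relation (2), observe that when $\scrB$ is compatible with $(\scrA,\scrC)$ and $m$ denotes the bridging node, all of $\scrA$, $\scrB$, $\scrC$, $\scrA\cup\scrB$, $\scrB\cup\scrC$ are connected subgraphs inside the interval $\scrA\cup\{m\}\cup\scrC$; so every term of (2) lies in the single parabolic braid group $\mathrm{Br}_{\scrA\cup\{m\}\cup\scrC}$, acting on a block of consecutive strands. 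Expanding each full twist $\ell_{[\cdot]}^2$ there as an ordered product of the standard Artin generators $A_{ij}$ ($1\le i<j\le n+1$) and simplifying by centrality of the full twists, one checks that the two sides of (2) evaluate to the same element. It suffices to do this once in the minimal configuration, where $\scrA\cup\{m\}\cup\scrC$ is as small as the combinatorics permits; the general case is the same computation carried out inside the relevant parabolic.

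\textbf{Injectivity by induction.} The bases $n=1$ (where $G_1\cong\Z\cong\mathrm{PB}_{A_1}$) and $n=2$ (where no box relation occurs, and a direct check gives $G_2\cong F_2\times\Z\cong\mathrm{PB}_{A_2}$) are immediate. For the inductive step, forgetting the last strand gives a split short exact sequence
\[
1\longrightarrow F_n\longrightarrow\mathrm{PB}_{A_n}\xrightarrow{\ p\ }\mathrm{PB}_{A_{n-1}}\longrightarrow 1,\qquad F_n\ \text{free of rank}\ n,
\]
under which $\ell_{[i,n]}^2\mapsto\ell_{[i,n-1]}^2$ and $\ell_\scrA^2\mapsto\ell_\scrA^2$ for $\scrA\subseteq A_{n-1}$. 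The same recipe on generators should define a retraction $\bar p_n\colon G_n\to G_{n-1}$; checking it is well defined is a bounded case analysis, organised by how the subgraphs in a given relation of $G_n$ meet the node $n$ (in the degenerate cases a box relation collapses to a commutator of type (1)). It is split by the inclusion $s\colon G_{n-1}\hookrightarrow G_n$ of the subgraphs of $A_{n-1}$, whose relations form a sublist of (1) and (2) for $A_n$. Granting this, $\phi_n$ sits in a commutative ladder between $1\to\ker\bar p_n\to G_n\to G_{n-1}\to 1$ and the fibration sequence; by induction the right vertical map is an isomorphism, so $\phi_n$ restricts to a surjection $\ker\bar p_n\twoheadrightarrow F_n$, and the images $\phi_n(g_i)=A_{i,n+1}A_{i+1,n+1}\cdots A_{n,n+1}$ of the elements $g_i\colonequals\ell_{[i,n]}^2(\ell_{[i,n-1]}^2)^{-1}\in\ker\bar p_n$ ($1\le i\le n$, with $\ell_{[n,n-1]}^2\colonequals 1$) form a free basis of $F_n$. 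By the five lemma the theorem reduces to showing $\ker\bar p_n$ is \emph{free of rank $n$}.

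\textbf{The main obstacle.} This last freeness statement is where the real work lies, and I would attack it by a Reidemeister--Schreier analysis of $\ker\bar p_n\le G_n$ relative to the coset action on $G_{n-1}$: one must rewrite exactly those defining relations of $G_n$ that are not already relations of $G_{n-1}$ --- the commutators in (1) with a single subgraph meeting $n$, and the box relations (2) in which $\scrA$, $\scrB$ or $\scrC$ meets $n$ --- and verify that, after substituting $\ell_{[i,n]}^2=g_i\,\ell_{[i,n-1]}^2$, they impose no relation among the $g_i$ beyond what freeness already dictates. The compatibility restriction on $\scrB$ is crucial here: it confines $\scrB$ to a bounded window around the bridging node, which is what keeps this list of relations finite and uniform rather than unbounded. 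I expect the box relations touching $n$ to be the delicate part, since unwinding them turns the identities into statements that must be absorbed into conjugation relations already present in $G_{n-1}$, and organising this uniformly in $i$ and $n$ --- rather than as a growing stack of ad hoc cancellations --- is the crux of the argument. (An alternative to the whole induction is a Tietze comparison with the Artin presentation of $\mathrm{PB}_{A_n}$: adjoin the $\ell_\scrA^2$ through their full-twist formulas, eliminate the $A_{ij}$ via the triangular inversion implicit above, and check that the rewritten Artin relations follow from (1) and (2); this, however, merely relocates the same combinatorial core.)
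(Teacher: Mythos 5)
There are two genuine gaps, one in each half of your plan. First, your verification of the box relation (2) is not actually an argument: the identity depends on several independent size parameters (in the paper's coordinates, $1\leq i<a\leq j+1\leq k<p$, i.e.\ the lengths $x=k-j$ and $y=j+2-a$ as well as the lengths of $\scrA$ and $\scrC$), and there is no homomorphism of braid groups carrying a ``minimal configuration'' to a general one --- cabling-type maps do not send full twists to full twists, and a parabolic inclusion does not change the internal shape of the configuration --- so ``do this once in the minimal case; the general case is the same computation'' does not reduce the general statement to a finite check. This is precisely why the paper proves the relation uniformly in the parameters, by factoring $\ell_{i,k}^{-2}$ and $\ell_{a,p}^{-2}$ via Corollary~\ref{2nd 07 july 2021} and then establishing the parametrized word identities of Lemma~\ref{01 of 22 june 2022} and Corollary~\ref{02 of 22 june 2022} (the $\mathfrak{c},\mathfrak{d}$ bookkeeping) before assembling them in Proposition~\ref{3 of 29 january 2022}. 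Some uniform argument of this kind is required and is absent from your sketch.

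Second, and more seriously, your proof of sufficiency is openly incomplete: after setting up the retraction $\bar p_n\colon G_n\to G_{n-1}$ (whose well-definedness on the box relations meeting the last node is itself only asserted as ``a bounded case analysis''), everything is reduced to the claim that $\ker\bar p_n$ is free of rank $n$, which you yourself identify as ``where the real work lies'' and for which you only describe a plan (a Reidemeister--Schreier rewriting over an infinite-index subgroup whose relations must be shown to impose nothing). As it stands this is the entire content of injectivity of $G_n\twoheadrightarrow\PB_{A_n}$, so the proposal is not a proof. It is worth noting that the paper sidesteps any kernel/freeness analysis: it defines a homomorphism $\upphi\colon G\to H$ from Artin's presentation to the new presentation by $A_{i,j}\mapsto \gen_{i,j-2}^{-1}\gen_{i,j-1}^{\phantom{2}}\gen_{i+1,j-2}^{\phantom{2}}\gen_{i+1,j-1}^{-1}$ (the formula forced by Proposition~\ref{02 of 15 may 2022}) and checks that all five families of Artin relations become consequences of the commutator and box relations (Lemma~\ref{lem:well defined} --- this is where the long computation lives); injectivity of $H\twoheadrightarrow\PB_{A_n}$ then follows formally from the commuting triangle $G\to H\to\PB_{A_n}$, with surjectivity of $\upphi$ by a short induction on $j-i$. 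If you keep your fibration route, establishing the freeness of $\ker\bar p_n$ will require work of at least comparable difficulty to that lemma, so the reduction does not by itself gain anything.
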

We also give a presentation in terms of double-indices in Corollary \ref{07 of 15 may 2022}, which then makes it easier to compare to the other presentations in the literature. We remark that the above presentation is symmetric,  and furthermore there are precisely \(\binom{n+2}{5}\)  non-commutator relations, each of which is palindromic and has  degree 5. The above presentation is different to the presentations in \cite{BB,A,FV,DG}, and the geometric presentations in \cite{MM}. Using the exact sequence \cite[(4), p150]{GW}, Theorem~\ref{02 02 2022} also independently recovers the presentation of the pure mapping class group of the punctured 2-sphere discovered in the recent work of Hirose--Omori \cite[3.1]{HO}. 

To  prove that the set \(\{\ell_{\scrA}^{2}\} \) generates, it  suffices to show that the standard generators in \cite{A,FV} can be written as a product of the elements in the set \(\{\ell_{\scrA}^{2}\}. \)  The \(\{\ell_{\scrA}^{2}\}\) are symmetric  whereas the standard  relations  for \( \mathrm{\PB}_{A_{n}}\) (\cite{A,FV}, see e.g  \cite{BB}) come from the existence of a split  short exact sequence
\[1 \to \mathrm{F}_{n-1} \to \mathrm{\PB}_{A_{n}} \to \mathrm{\PB}_{A_{n-1}} \to 1, \] where \(\mathrm{F_{n-1}}\) is a  free group,  using an  inductive argument. To prove results on the relations in the new presentation, we track the standard relations in \cite{A,FV} under a homomorphism \(\upphi \) that is expressed in Proposition \ref{02 of 15 may 2022}, and show that  the standard relations  are mapped to the identity in the new presentation.  This part of the argument is much harder, since the splitting involves choice, whereas the new relations are symmetric.

\medskip 

We finally show in Corollary~\ref{cor:not gen} that the pure braid groups of other Coxeter arrangements are not in general generated by squares of longest elements, and so the above is largely a type \(A\) phenomena. We further explain  how this relates to monodromy  around high codimension walls in the corresponding hyperplane arrangement, and thus answer a question of Donovan-Wemyss \cite{DW3}.
\subsection*{Acknowledgements}This work forms part of the author's  PhD, and was partially funded by a GRAID scholarship, an IMU  Breakout Graduate Fellowship, and by the ERC Consolidator Grant 101001227 (MMiMMa). The author would like to thank her supervisors Michael Wemyss and David Ssevviiri for their helpful guidance, Genki Omori for explaining the connection to \cite{HO}, and the referee for helpful comments. 

\section{Preliminaries}
\subsection{Classical Presentation}\label{classical presentation}
The classical Artin braid group is defined to be 
\[
\mathrm{Br}_{n}\colonequals \left\langle s_{1}, \hdots ,s_{n-1} \left|
\begin{array}{ll}
s_{i}s_{j}=s_{j}s_{i}\mbox{ if }|i-j|\geq 2,\\
s_{i}s_{i+1}s_{i}=s_{i+1}s_{i}s_{i+1}\mbox{ for all }i=1,2, \hdots, n-2
\end{array}
\right.\right\rangle.
\]
\begin{definition}\label{01 of 17 may 2022}
The kernel of \(\mathrm{Br}_{n+1} \to \mathfrak{S}_{n+1} \) sending \(s_{i} \) to the permutation \((i,i+1)\)    is defined to be the \emph{pure braid group}, and will be written  \(\mathrm{\PB}_{A_{n}}.\)
\end{definition}
For any $i, j$ satisfying $1\leq  i<j \leq n+1$, set
\[
\upsigma_{i,j}=\upsigma_{j,i}=(s_{j-1}\hdots s_{i+1})s_{i}(s^{-1}_{i+1} \hdots s^{-1}_{j-1})\in\mathrm{Br}_{n+1}.
\]
According to \cite{A,FV},  a presentation of the pure braid group can be given as follows.  As generators, $ \mathrm{\PB}_{A_{n}}=\langle A_{i,j}=A_{j,i}=\upsigma^{2}_{i,j}\rangle$, subject to the relations
\[
\begin{split}
&A^{-1}_{r,s}A_{i,j}A_{r,s} \\
&=\left\{\begin{array}{ll}
A_{i,j} & \text{if}~ r < s < i <j ~\text{or}~ i < r < s <j,\\
A_{r,j}A_{i,j}A^{-1}_{r,j} & \text{if}~ r < s = i <j,\\
(A_{i,j}A_{s,j})(A_{i,j})(A_{i,j}A_{s,j})^{-1} & \text{if}~ r = i <s <j,\\
(A_{r,j}A_{s,j}A^{-1}_{r,j}A^{-1}_{s,j})A_{i,j}(A_{r,j}A_{s,j}A^{-1}_{r,j}A^{-1}_{s,j})^{-1}&\text{if}~ r < i <s <j.
\end{array} \right.
\end{split}
\]
\subsection{Generation  via longest elements squared}\label{section 2.2}
A connected subgraph of $A_n$ is determined by its leftmost vertex \(i\), and its rightmost vertex \(j\),  where \(i\leq j\). To such a subgraph is an associated longest element in the corresponding parabolic subgroup of the symmetric group generated by the subgraph. The standard lift of this element to  \(\mathrm{Br}_{A_{n}}=\mathrm{Br}_{n+1} \), will be  written \(\ell_{i,j}\) (see e.g  \cite[p4]{BT}, \cite[Lemma 9.1.10]{ECHLPT} and \cite[p2]{G}).

Reversing  words in the Artin generators, that is to say  reading words backwards gives an antiautomorphism \(\mathrm{Br}_{n+1}\to \mathrm{Br}_{n+1}\) which we will write as  \(g \mapsto \bar{g}\) (see e.g \cite{FDSM,G}).

\begin{lemma}\label{09 july 2021}
$\ell_{i,i}= s_{i}$  and further if \(i < j\) then  
{\scriptsize\begin{align*}
\ell_{i,j} &= (s_i)(s_{i+1}s_i)(s_{i+2}s_{i+1}s_i)\hdots(s_{j-1}\hdots s_i)(s_j\hdots s_i)= (s_i\hdots s_j)\ell_{i,j-1}=\ell_{i,j-1}(s_{j}\cdots s_{i})\\
&= (s_{j})(s_{j-1}s_{j})(s_{j-2}s_{j-1}s_{j})\hdots(s_{i+1}\hdots s_{j})(s_{i}\hdots s_{j}) = (s_{j}\hdots s_{i})\ell_{i+1,j}=\ell_{i+1,j}(s_{i}\hdots s_{j}).
\end{align*}}
\end{lemma}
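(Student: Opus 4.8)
The statement to prove is Lemma~\ref{09 july 2021}, which gives explicit reduced-word expressions for the standard lift $\ell_{i,j}$ of the longest element of the parabolic subgroup on the connected subgraph $[i,j]$.

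The plan is to proceed by induction on $j-i$, the size of the subgraph. The base case $j=i$ is the statement $\ell_{i,i}=s_i$, which is immediate since the longest element of the rank-one Coxeter group $\langle s_i\rangle$ is $s_i$ itself and its standard lift is $s_i$. For the inductive step, I would recall the standard fact (see the cited references \cite{BT,ECHLPT,G}) that in type $\mathrm{A}$ the longest element $w_{i,j}$ of the parabolic on $\{s_i,\dots,s_j\}$ is the permutation reversing the block $\{i,i+1,\dots,j+1\}$, and that it admits the reduced word factorisation $w_{i,j} = (s_i s_{i+1}\cdots s_j)\, w_{i,j-1}$ as well as $w_{i,j} = w_{i,j-1}\,(s_j s_{j-1}\cdots s_i)$ — one peels off the coset representative for $\mathfrak{S}_{j+1}/\mathfrak{S}_j$ (resp.\ acting on the other side), which has length $j-i+1$, so lengths add and the lift of a reduced word is unambiguous. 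Symmetrically, $w_{i,j} = (s_j s_{j-1}\cdots s_i)\, w_{i+1,j} = w_{i+1,j}\,(s_i s_{i+1}\cdots s_j)$ by peeling off the $\mathfrak{S}$-coset at the other end. Iterating the first of these from $j$ down to $i$ gives the triangular product $(s_i)(s_{i+1}s_i)(s_{i+2}s_{i+1}s_i)\cdots(s_j\cdots s_i)$, and iterating the "$s_j$ first" version gives the mirror-image triangular product; the four one-step identities are exactly the displayed equalities in the lemma.

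Concretely, the key steps in order are: (1) identify $\ell_{i,j}$ with the standard lift of the block-reversal permutation $w_{i,j}\in\mathfrak{S}_{j+1}$, using that all reduced expressions of a single Coxeter element have the same lift in the braid group (Matsumoto/Tits); (2) establish the four coset-peeling identities $\ell_{i,j}=(s_i\cdots s_j)\ell_{i,j-1}=\ell_{i,j-1}(s_j\cdots s_i)=(s_j\cdots s_i)\ell_{i+1,j}=\ell_{i+1,j}(s_i\cdots s_j)$ by a length count showing no cancellation occurs, so that these are genuine equalities of standard lifts; (3) unwind the recursion in both directions to obtain the two closed-form triangular products. A clean way to see the equivalence of the two triangular forms directly, if one prefers to avoid repeatedly invoking the recursion, is to apply the reversal antiautomorphism $g\mapsto\bar g$ introduced just before the lemma: reversing the word $(s_i)(s_{i+1}s_i)\cdots(s_j\cdots s_i)$ yields $(s_i\cdots s_j)\cdots(s_i s_{i+1})(s_i)$, and since $\ell_{i,j}$ represents an involution one checks $\overline{\ell_{i,j}}=\ell_{i,j}$, which interchanges the two presentations after also using that $\overline{s_i\cdots s_j} = s_j\cdots s_i$.

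The main obstacle is not any single hard computation but making precise the passage between the symmetric group and the braid group: one must be careful that each claimed factorisation of $w_{i,j}$ is length-additive, so that concatenating reduced words gives a reduced word and hence the lift is well defined and multiplicative on that factorisation. Once the length bookkeeping (using $\ell(w_{i,j}) = \binom{j-i+1}{2}+(j-i)=\binom{j-i+2}{2}$, and that the coset representatives $s_i\cdots s_j$ and $s_j\cdots s_i$ have length $j-i+1$) is in place, everything else is a routine unwinding of the recursion, and the antiautomorphism remark makes the symmetry between the "$s_i$-first" and "$s_j$-first" forms transparent.
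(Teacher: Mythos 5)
Your argument is correct, but it runs in a different direction from the paper's. The paper takes the triangular word $(s_i)(s_{i+1}s_i)\cdots(s_j\cdots s_i)$ as the standard (cited) expression for $\ell_{i,j}$ and then derives the factorisations $(s_i\cdots s_j)\ell_{i,j-1}$, etc., purely by regrouping letters inside the braid group using the far commutation relations $s_as_b=s_bs_a$ for $|a-b|\geq 2$, finishing with the reversal antiautomorphism $g\mapsto\bar g$ (which fixes $\ell_{i,j-1}$) to get the right-handed versions. You instead establish the four coset-peeling identities at the level of the symmetric group, check length-additivity so that the standard lifts multiply (Matsumoto/Tits), and then unwind the recursion to recover the triangular words; the antiautomorphism appears only as an optional shortcut. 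Both are legitimate: your route is more conceptual and makes the well-definedness of the lift explicit, while the paper's is a self-contained braid-word computation that never needs to leave the braid group once the standard reduced word is quoted. Two small points to fix in your write-up: the length bookkeeping should read $\ell(w_{i,j})=\binom{j-i+1}{2}+(j-i+1)=\binom{j-i+2}{2}$ (your $+(j-i)$ is off by one, though the conclusion that the factorisation is length-additive is unaffected), and ``Coxeter element'' should be ``element of the Coxeter group'' when invoking Matsumoto's theorem.
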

\begin{proof}
The first equality is standard (see e.g \cite{D2}). The second equality holds by regrouping using  \(s_{i}s_{j}=s_{j}s_{i} \) whenever \(|i-j|\geq 2,\) to bring forward certain elements as follows
\[
\begin{tikzpicture}[>=stealth]
\node at (0,0) {$(s_{i}s_{i+1})\,\,\,(s_{i})(s_{i+2}s_{i+1}s_{i})(s_{i+3}s_{i+2}s_{i+1}s_{i})\hdots(s_j\hdots s_i)$};
\draw (-1.70,-0.25) -- (-1.70,-0.4) -- (-2.95,-0.4);
\draw[->] (-2.94,-0.4) -- (-2.94,-0.25);
\draw (0.15,-0.25) -- (0.15,-0.5) -- (-2.83,-0.5);
\draw[->] (-2.82,-0.5) -- (-2.82,-0.25);
\draw (2.9,-0.25) -- (2.9,-0.6) -- (-2.75,-0.6);
\draw[->] (-2.74,-0.6) -- (-2.74,-0.25);				
\end{tikzpicture}. 
\]
The  third  equality  follows, since \(\ell_{i,j-1} =(s_i)(s_{i+1}s_i)(s_{i+2}s_{i+1}s_i)\hdots(s_{j-1}\hdots s_i).\) Applying the antiautomorphism  \(g\to \bar{g},\) which fixes  \(\ell_{i,j-1},\) gives the third equality. The second line is simillar.
\end{proof}

\begin{cor}\label{2nd 07 july 2021} If \(i < j\) then  
\begin{align*}
\ell_{i,j}^{2}&= (s_{j} \ldots s_{i})(s_{i} \ldots s_{j})\ell_{i,j-1}^{2}
= (s_{i} \ldots s_{j})(s_{j} \ldots s_{i})\ell_{i+1,j}^{2}\\ 		
\ell_{i,j}^{2}&= \ell_{i,j-1}^{2}(s_{j} \ldots s_{i})(s_{i} \ldots s_{j})
= \ell_{i+1,j}^{2}(s_{i} \ldots s_{j})(s_{j} \ldots s_{i}).			 
\end{align*}
\end{cor}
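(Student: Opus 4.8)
The plan is to deduce all four identities from Lemma~\ref{09 july 2021} together with two \emph{exchange identities} that the lemma already contains. Reading the first line of Lemma~\ref{09 july 2021} as the chain $\ell_{i,j}=(s_i\cdots s_j)\ell_{i,j-1}=\ell_{i,j-1}(s_j\cdots s_i)$ and the second line as the chain $\ell_{i,j}=(s_j\cdots s_i)\ell_{i+1,j}=\ell_{i+1,j}(s_i\cdots s_j)$, and equating expressions across the two chains (both of which equal $\ell_{i,j}$), one reads off, for all $i<j$,
\[
\ell_{i+1,j}\,(s_i\cdots s_j)=(s_i\cdots s_j)\,\ell_{i,j-1},\qquad
(s_j\cdots s_i)\,\ell_{i+1,j}=\ell_{i,j-1}\,(s_j\cdots s_i).
\]
(The second is obtained by equating the other pair of factorisations, or equivalently by applying the reversal antiautomorphism $g\mapsto\bar g$ to the first, using $\overline{\ell_{a,b}}=\ell_{a,b}$ as in the proof of Lemma~\ref{09 july 2021}.) These are the only inputs beyond the lemma.

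Next I would expand $\ell_{i,j}^2=\ell_{i,j}\cdot\ell_{i,j}$, choosing for the two factors the two one-sided factorisations whose inner ends are exactly the block occurring in one of the exchange identities, so that a single application of an exchange identity makes the two surviving half-elements adjacent. For the first line: write $\ell_{i,j}^2=\big((s_j\cdots s_i)\ell_{i+1,j}\big)\big((s_i\cdots s_j)\ell_{i,j-1}\big)$ and apply the first exchange identity to the middle block $\ell_{i+1,j}(s_i\cdots s_j)$ to get $(s_j\cdots s_i)(s_i\cdots s_j)\ell_{i,j-1}^2$; write instead $\ell_{i,j}^2=\big((s_i\cdots s_j)\ell_{i,j-1}\big)\big((s_j\cdots s_i)\ell_{i+1,j}\big)$ and apply the second exchange identity to the middle block $\ell_{i,j-1}(s_j\cdots s_i)$ to get $(s_i\cdots s_j)(s_j\cdots s_i)\ell_{i+1,j}^2$. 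The second line is entirely analogous: from $\ell_{i,j}^2=\big(\ell_{i,j-1}(s_j\cdots s_i)\big)\big(\ell_{i+1,j}(s_i\cdots s_j)\big)$ the second exchange identity yields $\ell_{i,j-1}^2(s_j\cdots s_i)(s_i\cdots s_j)$, and from $\ell_{i,j}^2=\big(\ell_{i+1,j}(s_i\cdots s_j)\big)\big(\ell_{i,j-1}(s_j\cdots s_i)\big)$ the first exchange identity yields $\ell_{i+1,j}^2(s_i\cdots s_j)(s_j\cdots s_i)$.

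There is no real obstacle: the entire content is already in Lemma~\ref{09 july 2021}. The only point requiring care is the bookkeeping --- for each of the four identities one must pair up the correct two of the four one-sided factorisations of $\ell_{i,j}$ so that precisely the block appearing in an exchange identity is created in the middle --- and to observe that the argument is uniform in $i<j$, including the boundary case $j=i+1$, where $\ell_{i,j-1}=s_i$ and $\ell_{i+1,j}=s_j$ and the exchange identities degenerate to the braid relation $s_is_{i+1}s_i=s_{i+1}s_is_{i+1}$.
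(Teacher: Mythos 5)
Your proposal is correct and is essentially the paper's own argument: the paper likewise expands \(\ell_{i,j}^{2}\) as a product of two of the one-sided factorisations from Lemma~\ref{09 july 2021} and then rewrites the middle block (e.g.\ \(\ell_{i+1,j}(s_i\cdots s_j)=\ell_{i,j}=(s_i\cdots s_j)\ell_{i,j-1}\)), which is exactly your ``exchange identity'' step. The only cosmetic difference is that you isolate these exchange identities explicitly before using them, whereas the paper performs the same substitution in two lines inside each computation.
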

\begin{proof}
This follows by repeated application of Lemma \ref{09 july 2021}. Indeed,
\begin{align*}
\ell_{i,j}^{2}&=(s_{j}\hdots s_{i})\ell_{i+1,j}(s_{i}\hdots s_{j})\ell_{i,j-1}\\
&=(s_{j}\hdots s_{i})\ell_{i,j}\ell_{i,j-1} \\
&=(s_{j}\hdots s_{i})(s_i\hdots s_j)\ell_{i,j-1}^{2} 
\end{align*}
and
\begin{align*}
\ell_{i,j}^{2}&=(s_i\hdots s_j)\ell_{i,j-1}(s_{j}\hdots s_{i})\ell_{i+1,j}\\
&=(s_i\hdots s_j)\ell_{i,j}\ell_{i+1,j}  \\
&=(s_i\hdots s_j)(s_{j}\hdots s_{i})\ell_{i+1,j}^{2}. 
\end{align*}
Similarly, \(\ell_{i,j}^{2}= \ell_{i,j-1}^{2}(s_{j} \ldots s_{i})(s_{i} \ldots s_{j})= \ell_{i+1,j}^{2}(s_{i} \ldots s_{j})(s_{j} \ldots s_{i}).\) 
\end{proof}

The above allows us to  exhibit a new  generating set for \(\mathrm{\PB}_{A_n}\). In what follows, to obtain a unified statement we adopt the convention that \(\ell_{i,j}= 1\)\ whenever \(j < i\).  As calibration in the statement below, this means that \(A_{i,i+1}=\ell_{i,i}^{2}.\)

\begin{prop}\label{02 of 15 may 2022} For all \(i < j, \)
\(A_{i,j}=\ell_{i,j-2}^{-2}\cdot \ell_{i,j-1}^{2} \cdot   \ell_{i+1,j-2}^{2} \cdot \ell_{i+1,j-1}^{-2}. \)
\end{prop}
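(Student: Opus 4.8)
The plan is to prove the identity $A_{i,j}=\ell_{i,j-2}^{-2}\cdot \ell_{i,j-1}^{2}\cdot\ell_{i+1,j-2}^{2}\cdot\ell_{i+1,j-1}^{-2}$ by induction on $j-i$, using the recursions for $\ell_{i,j}^2$ established in Corollary~\ref{2nd 07 july 2021}. First I would treat the base cases: when $j=i+1$ the claimed formula reads $A_{i,i+1}=\ell_{i,i-1}^{-2}\cdot\ell_{i,i}^{2}\cdot\ell_{i+1,i-1}^{2}\cdot\ell_{i+1,i}^{-2}$, and with the convention $\ell_{a,b}=1$ for $b<a$ this collapses to $A_{i,i+1}=\ell_{i,i}^2=s_i^2=\upsigma_{i,i+1}^2$, which is exactly the calibration noted before the statement. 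I would also check $j=i+2$ directly, where two of the four factors are trivial and one needs $\upsigma_{i,i+2}^2=\ell_{i,i+1}^2\cdot\ell_{i+1,i+1}^{-2}$, i.e. $\upsigma_{i,i+2}^2 = \ell_{i,i+1}^2 s_{i+1}^{-2}$; this follows from $\ell_{i,i+1}=s_i s_{i+1}s_i = s_{i+1}s_is_{i+1}$ and the definition $\upsigma_{i,i+2}=s_{i+1}s_i s_{i+1}^{-1}$, so $\upsigma_{i,i+2}^2 = s_{i+1}s_i^2 s_{i+1}^{-1} = s_{i+1}s_i s_{i+1}\cdot s_{i+1}^{-1}s_i s_{i+1}^{-1}\cdot\ldots$ — more cleanly, $\ell_{i,i+1}^2 s_{i+1}^{-2} = (s_is_{i+1}s_i)(s_is_{i+1}s_i)s_{i+1}^{-2}$, and one rewrites this using the braid relation to match $\upsigma_{i,i+2}^2$.

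The inductive step is the heart of the argument. I would use the standard recursion for the generators $\upsigma_{i,j}$, namely $\upsigma_{i,j}=s_{j-1}\upsigma_{i,j-1}s_{j-1}^{-1}$ (directly from the definition $\upsigma_{i,j}=(s_{j-1}\cdots s_{i+1})s_i(s_{i+1}^{-1}\cdots s_{j-1}^{-1})$), hence $A_{i,j}=\upsigma_{i,j}^2 = s_{j-1}\upsigma_{i,j-1}^2 s_{j-1}^{-1} = s_{j-1}A_{i,j-1}s_{j-1}^{-1}$. Substituting the inductive hypothesis for $A_{i,j-1}$ gives
\[
A_{i,j}= s_{j-1}\cdot\ell_{i,j-3}^{-2}\cdot\ell_{i,j-2}^{2}\cdot\ell_{i+1,j-3}^{2}\cdot\ell_{i+1,j-2}^{-2}\cdot s_{j-1}^{-1}.
\]
Now the task is purely computational: using Corollary~\ref{2nd 07 july 2021} I would express each of $\ell_{i,j-1}^2$, $\ell_{i+1,j-1}^2$ in terms of $\ell_{i,j-2}^2$, $\ell_{i+1,j-2}^2$ and the prefactors $(s_{j-1}\cdots s_i)(s_i\cdots s_{j-1})$ etc., and likewise rewrite $\ell_{i,j-2}^2$ in terms of $\ell_{i,j-3}^2$ and $\ell_{i+1,j-2}^2$ in terms of $\ell_{i+1,j-3}^2$. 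The goal is to show that after conjugating by $s_{j-1}$ and cancelling, the right-hand side of the claimed formula equals the displayed conjugated expression. Concretely one has to verify that $\ell_{i,j-2}^{-2}\ell_{i,j-1}^2\ell_{i+1,j-2}^2\ell_{i+1,j-1}^{-2}$ reduces, via the two "left" recursions and the two "right" recursions for the $\ell^2$'s, to $s_{j-1}\,\ell_{i,j-3}^{-2}\ell_{i,j-2}^2\ell_{i+1,j-3}^2\ell_{i+1,j-2}^{-2}\,s_{j-1}^{-1}$; the strings of $s$'s of the form $(s_{j-1}\cdots s_a)(s_a\cdots s_{j-1})$ should telescope against each other, leaving only a conjugation by $s_{j-1}$.

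The main obstacle I anticipate is bookkeeping in this cancellation: Corollary~\ref{2nd 07 july 2021} gives four different recursions (left/right $\times$ removing left/right node), and choosing the right one for each of the four $\ell^2$ factors so that the intermediate $(s\cdots s)(s\cdots s)$ blocks actually cancel — rather than pile up — is delicate, and there may be a braid-relation rewriting step needed to align a block of the form $(s_i\cdots s_{j-1})(s_{j-1}\cdots s_i)$ with a neighbouring $s_{j-1}^{\pm1}$. A useful sanity check throughout is the image in $\mathfrak{S}_{n+1}$: both sides must map to the identity permutation (they lie in the pure braid group), and tracking the underlying permutations of the partial products is a cheap way to catch sign or index errors. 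An alternative to the brute-force cancellation, which I would fall back on if the direct rewriting gets unwieldy, is to instead verify the equivalent identity $\ell_{i,j-1}^2 = \ell_{i,j-2}^2\cdot A_{i,j}\cdot\ell_{i+1,j-1}^2\cdot\ell_{i+1,j-2}^{-2}$ (a rearrangement of the claim, valid since all factors commute appropriately once one knows $\ell_{i,j-2}^2$ and $\ell_{i+1,j-2}^2$ lie in suitable parabolic pure braid subgroups), which may match the geometric meaning of $\ell_{i,j-1}^2$ as the full twist on nodes $i,\dots,j-1$ more transparently.
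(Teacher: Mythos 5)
Your overall strategy (induct on $j-i$ via $A_{i,j}=s_{j-1}A_{i,j-1}s_{j-1}^{-1}$ and the recursions of Corollary~\ref{2nd 07 july 2021}) can be made to work, but as written it has two problems. First, your base case $j=i+2$ is wrong: only one of the four factors is trivial, namely $\ell_{i+1,j-2}^{2}=\ell_{i+1,i}^{2}=1$, whereas $\ell_{i,j-2}^{-2}=\ell_{i,i}^{-2}=s_i^{-2}$ survives. The identity you set out to verify, $\upsigma_{i,i+2}^{2}=\ell_{i,i+1}^{2}\,s_{i+1}^{-2}$, is false (compare exponent sums in $\mathrm{Br}_{n+1}$: $2$ versus $4$), so the braid-relation rewriting you propose there cannot succeed; the correct collapse is $\upsigma_{i,i+2}^{2}=s_i^{-2}\,\ell_{i,i+1}^{2}\,s_{i+1}^{-2}$, which does hold. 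Note also that your sanity check (image in $\mathfrak{S}_{n+1}$) would not catch this, since both sides are pure. Second, the heart of your argument, the telescoping in the inductive step, is only asserted, not carried out.

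When one does carry it out, the induction evaporates. Applying Corollary~\ref{2nd 07 july 2021} once to each of the inner factors, $\ell_{i,j-1}^{2}=\ell_{i,j-2}^{2}(s_{j-1}\cdots s_{i})(s_{i}\cdots s_{j-1})$ and $\ell_{i+1,j-1}^{2}=(s_{j-1}\cdots s_{i+1})(s_{i+1}\cdots s_{j-1})\,\ell_{i+1,j-2}^{2}$, gives
\begin{align*}
\ell_{i,j-2}^{-2}\,\ell_{i,j-1}^{2}\,\ell_{i+1,j-2}^{2}\,\ell_{i+1,j-1}^{-2}
&=(s_{j-1}\cdots s_{i})(s_{i}\cdots s_{j-1})(s_{i+1}\cdots s_{j-1})^{-1}(s_{j-1}\cdots s_{i+1})^{-1}\\
&=(s_{j-1}\cdots s_{i})\,s_{i}\,(s_{i+1}^{-1}\cdots s_{j-1}^{-1})
=(s_{j-1}\cdots s_{i+1})\,s_{i}^{2}\,(s_{i+1}^{-1}\cdots s_{j-1}^{-1})
=\upsigma_{i,j}^{2},
\end{align*}
valid for every $j\geq i+2$ (the case $j=i+2$ uses the convention $\ell_{i+1,i}=1$, and $j=i+1$ is the stated calibration). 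So the computation you relegate to the inductive step already proves the proposition outright, and the conjugation recursion $A_{i,j}=s_{j-1}A_{i,j-1}s_{j-1}^{-1}$ is unnecessary scaffolding. The paper takes a related but different direct route: it rewrites the claim as $\ell_{i,j-2}^{2}\,A_{i,j}\,\ell_{i+1,j-1}^{2}=\ell_{i,j-1}^{2}\,\ell_{i+1,j-2}^{2}$ and verifies this by a chain of applications of Lemma~\ref{09 july 2021}; your cancellation via Corollary~\ref{2nd 07 july 2021} is, once executed, arguably shorter, but in either form no induction is required.
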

\begin{proof}
Consider \( \ell_{i,j-2}^{2} \cdot A_{i,j} \cdot\ell_{i+1,j-1}^{2}\). By definition, this equals 
\begin{align*}
&=\ell_{i,j-2}^{2} \cdot \upsigma_{i,j}^{2} \cdot\ell_{i+1,j-1}^{2}\\
&=\ell_{i,j-2}^{2}\cdot (s_{j-1}\hdots s_{i+1})\cdot s_{i}^{2}\cdot(s^{-1}_{i+1} \hdots s^{-1}_{j-1})\cdot \ell_{i+1,j-1}^{2}\tag{by definition}\\
&=\ell_{i,j-2}^{2}\cdot (s_{j-1}\hdots s_{i+1})\cdot s_{i}^{2}\cdot (s^{-1}_{i+1} \hdots s^{-1}_{j-1})\cdot (s_{j-1}\hdots s_{i+1}) \cdot\ell_{i+2,j-1} \cdot\ell_{i+1,j-1}\tag{Lemma \ref{09 july 2021}}\\
&=\ell_{i,j-2}^{2}\cdot (s_{j-1}\hdots s_{i+1})\cdot s_{i}^{2} \cdot \ell_{i+2,j-1} \cdot \ell_{i+1,j-1}\\
&=\ell_{i,j-2} \cdot \ell_{i,j-2}\cdot (s_{j-1}\hdots  s_{i}) \cdot  s_{i} \cdot \ell_{i+2,j-1} \cdot (s_{i+1}\hdots  s_{j-1}) \ell_{i+1,j-2}\\
&=\ell_{i,j-2} \cdot \ell_{i,j-1} \cdot  s_{i} \cdot \ell_{i+1,j-1} \cdot  \ell_{i+1,j-2}\tag{Lemma \ref{09 july 2021}}\\
&=\ell_{i,j-2} \cdot (s_{j-1}\hdots  s_{i})  \cdot \ell_{i+1,j-1} \cdot  s_{i} \cdot (s_{i+1}\hdots  s_{j-1}) \cdot \ell_{i+1,j-2} \cdot  \ell_{i+1,j-2}\\
&=\ell_{i,j-1}  \cdot \ell_{i+1,j-1} \cdot  (s_{i}\hdots  s_{j-1}) \cdot \ell_{i+1,j-2} \cdot  \ell_{i+1,j-2}\\
&=\ell_{i,j-1}  \cdot \ell_{i,j-1} \cdot \ell_{i+1,j-2} \cdot  \ell_{i+1,j-2}\\
&=\ell_{i,j-1}^{2} \cdot \ell_{i+1,j-2}^{2}.	\qedhere					
\end{align*} 
\end{proof}	
\begin{cor}\label{04 of 15 may 2022}
 The set \( \{ \ell_{\scrA}^{2} \mid \scrA \subseteq A_{n}, \, \scrA~ \text{connected}~ \} \leq \mathrm{Br}_{A_{n}}\) generates the pure braid group \(\PB_{A_{n}}.\)
\end{cor}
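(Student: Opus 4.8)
The plan is to deduce Corollary \ref{04 of 15 may 2022} directly from Proposition \ref{02 of 15 may 2022} together with the classical presentation of $\PB_{A_n}$ recalled in Section \ref{classical presentation}. Since $\PB_{A_n}$ is generated by the elements $A_{i,j}=\upsigma_{i,j}^2$ for $1\leq i<j\leq n+1$, it suffices to show that each $A_{i,j}$ lies in the subgroup $H\leq \mathrm{Br}_{A_n}$ generated by $\{\ell_{\scrA}^2\mid \scrA\subseteq A_n\text{ connected}\}$. Proposition \ref{02 of 15 may 2022} gives the explicit identity
\[
A_{i,j}=\ell_{i,j-2}^{-2}\cdot \ell_{i,j-1}^{2}\cdot \ell_{i+1,j-2}^{2}\cdot \ell_{i+1,j-1}^{-2},
\]
so every $A_{i,j}$ is a word in the $\ell_{\scrA}^{\pm 2}$; hence $A_{i,j}\in H$ and therefore $\PB_{A_n}\leq H$.

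For the reverse inclusion, I would observe that each $\ell_{\scrA}$ is the standard lift of the longest element of a parabolic subgroup of $\mathfrak{S}_{n+1}$, so its image under $\mathrm{Br}_{n+1}\twoheadrightarrow\mathfrak{S}_{n+1}$ is that longest element, which is an involution; consequently $\ell_{\scrA}^2$ maps to the identity, i.e.\ $\ell_{\scrA}^2\in\PB_{A_n}$. Thus $H\leq \PB_{A_n}$, and combining the two inclusions gives $H=\PB_{A_n}$, which is exactly the statement. One small bookkeeping point: the convention $\ell_{i,j}=1$ for $j<i$ must be invoked so that the boundary cases $j=i+1$ and $j=i+2$ of the formula in Proposition \ref{02 of 15 may 2022} make sense (as noted in the excerpt, for $j=i+1$ the formula degenerates to $A_{i,i+1}=\ell_{i,i}^2$), but these degenerate instances still express $A_{i,j}$ as a product of elements of the generating set, so they cause no difficulty.

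There is essentially no obstacle here: the Corollary is a formal consequence of the Proposition, whose proof (the genuine computation, carried out above via repeated application of Lemma \ref{09 july 2021}) is already complete. The only thing to be careful about is that the classical generators $A_{i,j}$ from \cite{A,FV} really do generate $\PB_{A_n}$ — which is part of the cited presentation — and that the indices appearing in the formula, namely $\ell_{i,j-2}$, $\ell_{i,j-1}$, $\ell_{i+1,j-2}$, $\ell_{i+1,j-1}$, all correspond to connected subgraphs of $A_n$ (or the empty/trivial cases covered by the convention). Both are immediate, so the proof is just the two-line inclusion argument above.
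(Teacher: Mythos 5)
Your proposal is correct and follows exactly the paper's own argument: Proposition \ref{02 of 15 may 2022} writes each classical generator $A_{i,j}$ as a word in the $\ell_{\scrA}^{\pm 2}$, and since each $\ell_{\scrA}^2$ lies in $\PB_{A_n}$ (the longest element being an involution in $\mathfrak{S}_{n+1}$), the two inclusions give generation. The only difference is that you spell out the reverse inclusion and the degenerate-index convention slightly more explicitly than the paper does, which is harmless.
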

\begin{proof}
Proposition  \ref{02 of 15 may 2022} shows that each element \(A_{i,j}\) of the generating  set of the pure braid group in \cite{A,FV}  can be written as the product of elements in the set \( \{ \ell_{\scrA}^{2} \mid \scrA \subseteq A_{n}, \, \scrA ~ \text{connected}\}. \)  Since each element \(\ell_{\scrA}^{2} \in \PB_{A_{n}} \), the result follows.
\end{proof}

\section{The new relations}
In this section, we first show in \textsection  \ref{sectiona}  that certain commutator and box relations hold, then in \textsection \ref{sectionb} we  prove that these suffice to give a full presentation of \(\mathrm{\PB}_{A_{n}}\).
\subsection{Box and commutator relations}\label{sectiona}
As notation, set \(A_{n}:= \Afour{B}{B}{B}{B}\). By \( \Afour{B}{B}{B}{W}\) we mean the connected subgraph starting at \(1\) and ending at \(n-1\).
\begin{lemma}\label{11 february 2022}
If \(\scrK \subseteq A_{n}  \) is  connected, then the following statements hold
\begin{enumerate}
\item [(1)] \((s_{n}\hdots s_{1})(s_{1}\hdots s_{n})\) commutes with \(\ell_\scrK\) for all   \(\scrK \subseteq \Afour{B}{B}{B}{W}\) 
\item[(2)] \((s_{1}\hdots s_{n})(s_{n}\hdots s_{1})\) commutes with \(\ell_\scrK\) for all  \(\scrK \subseteq \Afour{W}{B}{B}{B}\)			 
\end{enumerate}	
\end{lemma}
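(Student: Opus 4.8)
The plan is to reduce both statements to a short conjugation computation in $\mathrm{Br}_{n+1}$. Put $c\colonequals s_1s_2\cdots s_n$, so that its reverse is $\bar c = s_n\cdots s_1$ and
\[
(s_n\cdots s_1)(s_1\cdots s_n)=\bar c\,c,\qquad (s_1\cdots s_n)(s_n\cdots s_1)=c\,\bar c.
\]
By Lemma~\ref{09 july 2021}, if $\scrK$ is a connected subgraph of $\Afour{B}{B}{B}{W}$ then $\ell_\scrK$ is a word in $s_1,\dots,s_{n-1}$ only, and if $\scrK\subseteq\Afour{W}{B}{B}{B}$ then $\ell_\scrK$ is a word in $s_2,\dots,s_n$ only. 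Hence for (1) it suffices to show that $\bar c\,c$ commutes with each of $s_1,\dots,s_{n-1}$, and for (2) that $c\,\bar c$ commutes with each of $s_2,\dots,s_n$: once an element commutes with each of a set of generators it commutes with every word in them, in particular with $\ell_\scrK$.

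The computational heart is the identity $c\,s_i\,c^{-1}=s_{i+1}$ for $1\le i\le n-1$. To see it, append $s_i$ to $c=s_1\cdots s_n$ and move it leftwards: it commutes past $s_n,s_{n-1},\dots,s_{i+2}$, then the braid relation converts $s_is_{i+1}s_i$ into $s_{i+1}s_is_{i+1}$, and finally the exposed leading $s_{i+1}$ commutes past $s_{i-1},\dots,s_1$, giving $c\,s_i=s_{i+1}\,c$. Applying the reversing antiautomorphism $g\mapsto\bar g$ to this equation turns it into $s_i\,\bar c=\bar c\,s_{i+1}$, i.e.
\[
\bar c\,s_j\,\bar c^{-1}=s_{j-1}\qquad\text{for }2\le j\le n.
\]

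Combining the two identities finishes the proof. For (1) and $1\le i\le n-1$,
\[
(\bar c\,c)\,s_i\,(\bar c\,c)^{-1}=\bar c\bigl(c\,s_i\,c^{-1}\bigr)\bar c^{-1}=\bar c\,s_{i+1}\,\bar c^{-1}=s_i,
\]
the last step being valid precisely because $2\le i+1\le n$; so $\bar c\,c$ centralises $s_1,\dots,s_{n-1}$ and hence commutes with every such $\ell_\scrK$. Symmetrically, for (2) and $2\le i\le n$,
\[
(c\,\bar c)\,s_i\,(c\,\bar c)^{-1}=c\bigl(\bar c\,s_i\,\bar c^{-1}\bigr)c^{-1}=c\,s_{i-1}\,c^{-1}=s_i,
\]
using $1\le i-1\le n-1$, so $c\,\bar c$ centralises $s_2,\dots,s_n$.

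There is no real obstacle here: the whole argument rests on a single braid relation together with far-commutation, and the only point needing care is the bookkeeping of indices — this is exactly where the constraints $\scrK\subseteq\Afour{B}{B}{B}{W}$ and $\scrK\subseteq\Afour{W}{B}{B}{B}$ are used, since they guarantee that the shifted index $i+1$ (resp. $i-1$) stays in the range where the conjugation formula applies. (Alternatively, Corollary~\ref{2nd 07 july 2021} gives $\bar c\,c=\ell_{1,n}^{2}\ell_{1,n-1}^{-2}$, and one could conclude from the centrality of the full twists $\ell_{1,n}^{2}$ and $\ell_{1,n-1}^{2}$ in $\mathrm{Br}_{n+1}$ and in $\langle s_1,\dots,s_{n-1}\rangle$ respectively; the direct route above avoids appealing to these facts.)
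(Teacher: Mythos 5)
Your proof is correct, and it takes a genuinely different route from the paper. The paper proves the commutation by threading the whole block $\ell_{\scrK}=\ell_{i,j}$ through $(s_n\cdots s_1)(s_1\cdots s_n)$, using the longest-element factorization identities of Lemma~\ref{09 july 2021} (in the form $\ell_{i,j}\,s_{j+1}\cdots s_i=s_{j+1}\cdots s_i\,\ell_{i+1,j+1}$, both sides being $\ell_{i,j+1}$) together with far-commutativity of the remaining generators. You instead reduce to generators: the single computation $c\,s_i\,c^{-1}=s_{i+1}$ for $1\le i\le n-1$ (a braid relation plus far-commutation), its image $\bar c\,s_j\,\bar c^{-1}=s_{j-1}$ under the reversal antiautomorphism, and the composition of the two show that $\bar c\,c$ fixes each of $s_1,\dots,s_{n-1}$ under conjugation (and $c\,\bar c$ fixes $s_2,\dots,s_n$), and the index bookkeeping is exactly where the hypotheses on $\scrK$ enter. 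This actually proves something stronger than the lemma — that $(s_n\cdots s_1)(s_1\cdots s_n)$ centralizes the entire parabolic subgroup $\langle s_1,\dots,s_{n-1}\rangle$, not merely the elements $\ell_{\scrK}$ — and it needs Lemma~\ref{09 july 2021} only to note (what is anyway true by definition of the standard lift) that $\ell_{\scrK}$ is a word in the generators indexed by $\scrK$. The paper's argument, by contrast, stays entirely within the longest-element identities it has already established and reuses them; your alternative closing remark via $\bar c\,c=\ell_{1,n}^{2}\ell_{1,n-1}^{-2}$ and centrality of full twists is also valid and is essentially the inclusion-commutativity viewpoint used later in Proposition~\ref{3 of 29 january 2022}(2).
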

\begin{proof}	
Say  \(\scrK\) starts  at vertex \(i\) and ends at vertex \(j\), with   \(i\leq j < n\). Then
\begin{align*}
\ell_\scrK(s_{n}\hdots s_{1})(s_{1}\hdots s_{n})
&= \ell_{i,j}(s_{n}\hdots s_{1})(s_{1}\hdots s_{n})\\
&= s_{n}\hdots s_{j+2}(\ell_{i,j}\,s_{j+1}\hdots s_{i})s_{i-1} \hdots s_{1}s_{1}\hdots s_{n}\tag{commutativity of braids }\\
&= s_{n}\hdots s_{j+2}(s_{j+1}\hdots s_{i} \,\ell_{i+1,j+1})s_{i-1} \hdots s_{1}s_{1}\hdots s_{n} \tag{Lemma \ref{09 july 2021}, both brackets equal to \(\ell_{i,j+1}\)}\\
&= s_{n}\hdots s_{i} \,\ell_{i+1,j+1}(s_{i-1} \hdots s_{1}s_{1} \hdots s_{i-1}) s_{i} \hdots s_{n} \\
&= s_{n}\hdots s_{i}(s_{i-1} \hdots s_{1}s_{1} \hdots s_{i-1})\ell_{i+1,j+1}\, s_{i} \hdots s_{n} \tag{commutativity of braids }\\
&= s_{n} \hdots s_{1}s_{1} \hdots s_{i-1}(\ell_{i+1,j+1}\, s_{i} \hdots s_{j+1})s_{j+2} \hdots s_{n} \\
&= s_{n} \hdots s_{1}s_{1} \hdots s_{i-1}( s_{i} \hdots s_{j+1}\,\ell_{i,j})s_{j+2} \hdots s_{n} \tag{Lemma \ref{09 july 2021}, both brackets equal to \(\ell_{i,j+1}\)}\\
&=(s_{n}\hdots s_{1})(s_{1}\hdots s_{n})\ell_{i,j}\tag{commutativity of braids }\\
&=(s_{n}\hdots s_{1})(s_{1}\hdots s_{n})\ell_\scrK.			
\end{align*}			
The other statement is similar.	
\end{proof}
\begin{lemma}\label{03 march 2022}
If  \(i < j\) then  \( \ell_{i,j}^{2}=(s_{j} \hdots s_{i})^{(j-i) +2}=(s_{i} \hdots s_{j})^{(j-i)+2}.\)
\end{lemma}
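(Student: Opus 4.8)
The plan is to induct on $j-i$, using Corollary~\ref{2nd 07 july 2021} to peel $\ell_{i,j}$ down to $\ell_{i,j-1}$, and then to close with a braid-word identity which is a form of the classical ``full twist'' relation. Before starting, note that the two claimed equalities are equivalent: the reversal antiautomorphism $g\mapsto\bar g$ sends $(s_i\cdots s_j)^{(j-i)+2}$ to $(s_j\cdots s_i)^{(j-i)+2}$, and it fixes $\ell_{i,j}^{2}$ because Lemma~\ref{09 july 2021} gives $\bar\ell_{i,j}=\ell_{i,j}$. So it suffices to prove $\ell_{i,j}^{2}=(s_i\cdots s_j)^{(j-i)+2}$. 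The base case $j=i$ is immediate, and $j=i+1$ is the single braid relation $(s_is_{i+1}s_i)^{2}=(s_is_{i+1})^{3}$.

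For the inductive step, Corollary~\ref{2nd 07 july 2021} gives $\ell_{i,j}^{2}=\ell_{i,j-1}^{2}\,(s_j\cdots s_i)(s_i\cdots s_j)$, and by the inductive hypothesis $\ell_{i,j-1}^{2}=(s_i\cdots s_{j-1})^{(j-i)+1}$. Cancelling one factor $(s_i\cdots s_j)$ on the right, the desired equality $\ell_{i,j}^{2}=(s_i\cdots s_j)^{(j-i)+2}$ is reduced to the identity
\[
(s_i\cdots s_{j-1})^{(j-i)+1}(s_j\cdots s_i)=(s_i\cdots s_j)^{(j-i)+1},
\]
which I will refer to as $(\ast)$.

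To prove $(\ast)$ the key tool is the ``Coxeter shift'': writing $c=s_is_{i+1}\cdots s_j$, the braid relations give $c\,s_k\,c^{-1}=s_{k+1}$ for all $i\le k\le j-1$ (slide $s_k$ leftwards past the commuting generators $s_j,\dots,s_{k+2}$, apply one braid move, and slide the emerging $s_{k+1}$ back out). Hence $c$ conjugates the Coxeter element $s_i\cdots s_{j-1}$ of the parabolic on $\{i,\dots,j-1\}$ to the Coxeter element $s_{i+1}\cdots s_j$ of the parabolic on $\{i+1,\dots,j\}$, and so conjugates $\ell_{i,j-1}^{2}$ onto $\ell_{i+1,j}^{2}$. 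Substituting this and repeatedly rewriting powers of $c$ against these conjugated full twists reduces $(\ast)$, after some bookkeeping of the boundary factors $s_i$ and $s_j$ that fall outside the range of the shift, to an identity one checks directly. Equivalently, $(\ast)$ together with the inductive hypothesis is precisely the classical statement that the full twist of $\mathrm{Br}_{j-i+2}$ equals $(s_i\cdots s_j)^{j-i+2}$, so one may instead simply cite this (e.g.\ \cite{ECHLPT,G}). I expect $(\ast)$ to be the only real obstacle: once the inductive hypothesis has been substituted one is carrying out a genuine braid-word computation rather than a formal cancellation, and the delicate point is tracking the boundary generators $s_i$ (and $s_j$) on which the Coxeter shift fails; the reduction via Corollary~\ref{2nd 07 july 2021}, the base cases, and the symmetry argument for the second equality are all routine.
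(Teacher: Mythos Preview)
Your reduction to the identity $(\ast)$ is clean, and the symmetry argument via $g\mapsto\bar g$ is fine. The gap is that you do not actually prove $(\ast)$. The ``Coxeter shift'' $c\,s_k\,c^{-1}=s_{k+1}$ is correct and is a reasonable tool, but the sentence ``after some bookkeeping of the boundary factors \ldots\ to an identity one checks directly'' is not a proof; it is a description of where the work would go. And your fallback---``one may instead simply cite this''---is circular: the statement $\ell_{i,j}^{2}=(s_i\cdots s_j)^{j-i+2}$ \emph{is} the classical full-twist identity, so appealing to it to establish $(\ast)$ is just invoking the lemma to prove the lemma. If you want to rescue the inductive route, a clean way to prove $(\ast)$ is to show by a short induction on $m$ that $(s_i\cdots s_j)^m=(s_i\cdots s_{j-1})^m(s_j s_{j-1}\cdots s_{j-m+1})$ for $1\le m\le j-i+1$; the step uses one commutation and one braid move, and $m=j-i+1$ gives $(\ast)$.

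By contrast, the paper avoids $(\ast)$ entirely and gives a direct, non-inductive argument: it multiplies the two staircase expressions for $\ell_{1,n}$ from Lemma~\ref{09 july 2021},
\[
\ell_{1,n}=(s_n\cdots s_1)(s_n\cdots s_2)\cdots(s_n)\quad\text{and}\quad \ell_{1,n}=(s_1)(s_2 s_1)\cdots(s_n\cdots s_1),
\]
and then repeatedly commutes the leading letter of each right-hand bracket through the remaining left-hand bracket, collapsing everything to $(s_n\cdots s_1)^{n+1}$. This is shorter and entirely self-contained; your approach would become a genuine alternative once $(\ast)$ is written out properly.
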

\begin{proof}
We will  prove  the case when \(i=1\) and \(j=n\) since the notation for this is clearer.
Recall  by Lemma \ref{09 july 2021} that \(\ell_{1,n} = (s_{n}\hdots s_{1})(s_{n} \hdots s_{2})(s_{n}\hdots s_{3})\hdots (s_{n}s_{n-1}) s_{n},
\) and also that \(\ell_{1,n} =(s_1)(s_2s_1)(s_3s_2s_1)\hdots(s_{n-1}\hdots s_1)(s_n\hdots s_1)
.\) Given this, 
\begin{align*}
&\ell_{1,n}^{2}\\
&=[(s_{n}\hdots s_{1})(s_{n} \hdots s_{2})(s_{n}\hdots s_{3})\hdots (s_{n}s_{n-1}) s_{n} \cdot (s_1)(s_2s_1)\hdots(s_{n-1}\hdots s_1)(s_n\hdots s_1)\\
&=[(s_{n}\hdots s_{1})(s_{n} \hdots s_{1})(s_{n}\hdots s_{3})\hdots (s_{n}s_{n-1})  \cdot(s_2s_1)\hdots(s_{n}\hdots s_1)(s_n\hdots s_1) \tag{\(s_{1}~\text{and} ~s_{n} \) commute through}\\
&=(s_{n}\hdots s_{1})(s_{n} \hdots s_{2}s_{1})\hdots (s_{n} \hdots s_{2}s_{1})(s_{n} \hdots s_{2}s_{1})\tag{repeat the above step}\\
&=(s_{n} \hdots s_{2}s_{1})^{n+1}.
\end{align*}
The general case is similar.
\end{proof}
The following technical Lemma will be required later.

\begin{lemma}\label{2nd of 03 march 2022}
For all \(i\geq 1\), and $j\geq i+2$, the following statements hold.
\begin{enumerate}
\item \( \begin{aligned}[t]
 \scriptstyle{ (s_{j} \hdots s^{2}_{i}\hdots s_{j})(s_{j-1} \hdots s^{2}_{i}\hdots s_{j-1})=}&  \scriptstyle{ (s_{j}\hdots s_{i+1})^{2}s_{i}s_{i+1}(s_{i+1}s_{i })(s_{i+2}s_{i+1})\hdots  (s_{j-1}s_{j-2})(s_{j}s_{j-1}).}\\
\end{aligned}\)
\item\(\begin{aligned}[t]
\scriptstyle{ (s_{i} \hdots s^{2}_{j}\hdots s_{i})(s_{i+1} \hdots s^{2}_{j}\hdots s_{i+1})=}&  \scriptstyle{(s_{i}\hdots s_{j-1})^{2}s_{j}s_{j-1}(s_{j-1}s_{j })(s_{j-2}s_{j-1})\hdots (s_{i+1}s_{i+2})(s_{i}s_{i+1}).}
\end{aligned}\)
\item\(\begin{aligned}
\scriptstyle{(s_{j-1} \hdots s^{2}_{i}\hdots s_{j-1})(s_{j} \hdots s^{2}_{i}\hdots s_{j})=} & \scriptstyle{ (s_{j-1}s_{j})(s_{j-2}s_{j-1 })\hdots (s_{i+1}s_{i+2})(s_{i}s_{i+1})s_{i+1}s_{i}(s_{i+1}\hdots s_{j})^{2}.}
\end{aligned}\)
\item\(\begin{aligned}
 \scriptstyle { (s_{i+1} \hdots s^{2}_{j}\hdots s_{i+1})(s_{i} \hdots s^{2}_{j}\hdots s_{i})=} & \scriptstyle{(s_{i+1}s_{i})(s_{i+2}s_{i+1 }) \hdots (s_{j-1}s_{j-2})(s_{j}s_{j-1}) s_{j-1}s_{j}(s_{j-1}\hdots s_{i})^{2}.}	
\end{aligned}\)
\end{enumerate}
\end{lemma}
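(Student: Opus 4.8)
The plan is to prove statement (1) directly by pushing a single factor of $s_i$ through a product of descending runs, then obtain (2), (3), (4) as formal consequences. First I would record the two basic identities behind everything: for $j\geq i+2$ we have $s_j\cdots s_i^2\cdots s_j = (s_j\cdots s_{i+1})\,s_i^2\,(s_{i+1}\cdots s_j)$, and more usefully the ``shift'' identity $s_i(s_{i+1}\cdots s_j) = (s_{i+1}\cdots s_j)s_i$ is false, but $(s_j\cdots s_{i+1})s_i(s_{i+1}\cdots s_j)$ is symmetric in a way we can exploit. The cleanest route is to note from Lemma~\ref{09 july 2021} that $\ell_{i,j}=(s_i\cdots s_j)\ell_{i,j-1}=\ell_{i,j-1}(s_j\cdots s_i)$, so $s_j\cdots s_i^2\cdots s_j$ is exactly the ``second-half'' conjugation term appearing in Corollary~\ref{2nd 07 july 2021}; concretely, Corollary~\ref{2nd 07 july 2021} gives $\ell_{i,j}^2 = (s_i\cdots s_j)(s_j\cdots s_i)\,\ell_{i+1,j}^2$ and iterating down from $j$ to $i+1$ turns the left-hand side of (1) into a telescoping product of such ``double-run'' blocks.

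The key computational step is a braid-move lemma: I would show that inside $(s_j\cdots s_{i+1})^2 \cdot [s_i s_{i+1}] \cdot (s_{i+1}s_i)(s_{i+2}s_{i+1})\cdots(s_j s_{j-1})$ one can repeatedly apply $s_k s_{k+1} s_k = s_{k+1} s_k s_{k+1}$ together with the far commutations $s_a s_b = s_b s_a$ for $|a-b|\geq 2$ to reassemble the right-hand side into $(s_j\cdots s_i^2\cdots s_j)(s_{j-1}\cdots s_i^2\cdots s_{j-1})$. In practice it is easier to start from the left-hand side: expand $s_j\cdots s_i^2\cdots s_j = (s_j\cdots s_{i+1})(s_i s_i)(s_{i+1}\cdots s_j)$, likewise for the $(j-1)$-block, then slide the $(s_{i+1}\cdots s_j)$ tail of the first block leftward past the head $(s_{j-1}\cdots s_{i+1})$ of the second block using commutations (only $s_j$ fails to commute, and it meets $s_{j-1}$, producing the braid move), collecting the two leading descending runs into $(s_j\cdots s_{i+1})(s_j\cdots s_{i+1})$ exactly as in the proof of Lemma~\ref{03 march 2022}. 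What remains after extracting $(s_j\cdots s_{i+1})^2$ is the low-index debris $s_i s_{i+1} s_{i+1} s_i s_{i+2} s_{i+1}\cdots$, which rearranges termwise to the claimed ascending/zig-zag tail.

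Once (1) is established, statements (2)–(4) follow with no new work. Statement (2) is obtained from (1) by applying the automorphism $s_k\mapsto s_{n+?-k}$ that reverses the Dynkin diagram — more precisely, the index-reflection sending $s_m\mapsto s_{i+j-m}$, which swaps the roles of $i$ and $j$ and carries ascending runs to descending runs and vice versa; it sends the identity in (1) to the identity in (2). Statement (3) is obtained from (1) by applying the reversal antiautomorphism $g\mapsto\bar g$ of $\mathrm{Br}_{n+1}$ recalled before Lemma~\ref{09 july 2021}: reading both sides of (1) backwards fixes each palindromic block $s_j\cdots s_i^2\cdots s_j$ but reverses their order and reverses the tail, giving precisely (3). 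Finally (4) is (3) with the same index-reflection $s_m\mapsto s_{i+j-m}$ applied, or equivalently (2) read backwards. I expect the main obstacle to be bookkeeping in the braid-move step of (1): one must track exactly which adjacent transpositions collide and verify that after all far-commutations the only non-commuting encounters are consecutive-index pairs where a braid relation legitimately applies, so that the low-index tail assembles in the stated order rather than some permuted variant. This is the same phenomenon as in Lemma~\ref{03 march 2022} but with an extra ``$s_i s_{i+1}$'' insertion to carry along, so I would model the argument closely on that proof and check the insertion commutes past everything of index $\geq i+3$ cleanly.
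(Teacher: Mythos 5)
Your plan for (1) is essentially the paper's own argument: the paper also works directly on the left-hand side, applying the braid relation at the junction where the first block's trailing $s_j$ meets the second block's leading $s_{j-1}$, then far-commuting the freed letters outward, and iterating. One caution on your description: the collection of $(s_j\cdots s_{i+1})^2$ at the front is \emph{not} achievable by far commutations plus a single collision of $s_j$ with $s_{j-1}$. Already for $j=i+2$ one finds the left side equal to $s_j s_{j-1} s_j\, s_i^2 s_{j-1} s_i^2\, s_j s_{j-1}$ after the first braid move, and a further braid relation among the low-index letters ($s_i s_i s_{j-1} s_i s_i = s_{j-1} s_i s_{j-1} s_{j-1} s_i$ when $j-1=i+1$) is needed to reach the stated form; in general each iteration creates a fresh consecutive-index collision, so roughly $j-i$ braid moves occur, exactly as in the paper's displayed computation. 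Since you explicitly flag tracking these collisions as the main bookkeeping task, this is a matter of precision rather than a gap. Where you genuinely depart from the paper is in (2)--(4): the paper merely says the other statements are similar, i.e.\ repeat the computation, whereas you deduce (2) from (1) by the diagram-reversal automorphism $s_m\mapsto s_{i+j-m}$ of the parabolic subgroup on $\{s_i,\dots,s_j\}$, and (3), (4) by the word-reversal antiautomorphism $g\mapsto\bar g$. I checked these reductions: the palindromic blocks are fixed by reversal while their order and the tail reverse, and the index reflection carries each side of (1) to the corresponding side of (2), so all three deductions are valid. This is a clean economy --- one computation instead of four --- and it is consonant with the paper's own use of $g\mapsto\bar g$ in the proof of Lemma~\ref{01 of 22 june 2022}(3),(4); the only new ingredient is the diagram automorphism, which is legitimate since all words involved lie in the parabolic subgroup it preserves.
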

\begin{proof}
For \((1)\), observe that
\begin{align*}
(s_{j} \hdots s^{2}_{i}\hdots s_{j})(s_{j-1} \hdots s^{2}_{i}\hdots s_{j-1})
&=(s_{j} \hdots s_{i+1})s^{2}_{i}\hdots s_{j-1}s_{j}s_{j-1} \hdots s^{2}_{i}\hdots s_{j-1}\\
&=(s_{j} \hdots s_{i+1})s^{2}_{i}\hdots s_{j}s_{j-1}s_{j} \hdots s^{2}_{i}\hdots s_{j-1}\\
&=(s_{j} \hdots s_{i+1})s_{j}s^{2}_{i}\hdots s_{j-2}s_{j-1}s_{j-2} \hdots s^{2}_{i}\hdots (s_{j}s_{j-1}).
\end{align*}
This process is repeated until we achieve the desired expression. The other statements are similar.
\end{proof}
As in the introduction, consider the graph \(A_{n}\), with connected subgraphs \(\scrA, \scrB\), \(\scrC.\)
\begin{definition}
If $\scrA\cap \scrB\neq\emptyset$ then define $d(\scrA,\scrB)=0$. Else, the distance $d(\scrA,\scrB)$ is defined to be the number of edges between $\scrA$ and $\scrB$.
\end{definition}
\begin{notation}\label{03 april 2022}
Fix \(\scrA, \scrC\) with \(d(\scrA, \scrC)=2\) and \(\scrB\) compatible with \((\scrA, \scrC)\) in the sense of the introduction. Equivalently, writing \(\scrA=[i,j], \scrC= [j+2,p]\) and \(\scrB=[a,k]\), we have
\[
\begin{tikzpicture}
\draw[white!60!black] (-0.25,0.25) -- (2.25,0.25)--(2.25,-0.25)--(-0.25,-0.25)--cycle;
\draw[white!60!black] (2.75,0.25) -- (5.75,0.25)--(5.75,-0.25)--(2.75,-0.25)--cycle;
\draw[densely dotted,line width=0.75pt] (1.25,0.6) -- (4.75,0.6)--(4.75,-0.5)--(1.25,-0.5)--cycle;
\filldraw (0,0) circle (2pt);
\filldraw (0.5,0) circle (2pt);
\filldraw (1,0) circle (2pt);
\filldraw (1.5,0) circle (2pt);
\filldraw (2,0) circle (2pt);
\filldraw (2.5,0) circle (2pt);
\filldraw (3,0) circle (2pt);
\filldraw (3.5,0) circle (2pt);
\filldraw (4,0) circle (2pt);
\filldraw (4.5,0) circle (2pt);
\filldraw (5,0) circle (2pt);
\filldraw (5.5,0) circle (2pt);	
\node at (0.0,0.35) {$\scriptstyle i$};
\node at (2,0.35) {$\scriptstyle j$};
\node at (3,0.35) {$\scriptstyle j+2$};	
\node at (5.5,0.35) {$\scriptstyle p$};
\node at (1.5,-0.60) {$\scriptstyle a$};
\node at (4.5,-0.60) {$\scriptstyle k$};			
\end{tikzpicture}
\]
The choice of such \(\scrA, \scrB\)  and \( \scrC \) translates into the condition \(1\leq i<a\leq j+1 \leq k < p \leq n.\) Given   \(\scrA, \scrB\) and \( \scrC \), set \(x\colonequals k-j\) and \(y\colonequals j+2 -a\), which visually are 
\[
\begin{tikzpicture}
\draw[white!60!black] (-0.25,0.25) -- (2.25,0.25)--(2.25,-0.25)--(-0.25,-0.25)--cycle;
\draw[white!60!black] (2.75,0.25) -- (5.75,0.25)--(5.75,-0.25)--(2.75,-0.25)--cycle;
\draw[densely dotted,line width=0.75pt] (1.25,0.60) -- (4.75,0.60)--(4.75,-0.60)--(1.25,-0.60)--cycle;
\filldraw (0,0) circle (2pt);
\filldraw (0.5,0) circle (2pt);
\filldraw (1,0) circle (2pt);
\filldraw (1.5,0) circle (2pt);
\filldraw (2,0) circle (2pt);
\filldraw (2.5,0) circle (2pt);
\filldraw (3,0) circle (2pt);
\filldraw (3.5,0) circle (2pt);
\filldraw (4,0) circle (2pt);
\filldraw (4.5,0) circle (2pt);
\filldraw (5,0) circle (2pt);
\filldraw (5.5,0) circle (2pt);		
\draw [
thick,
decoration={
	brace,
	raise=0
},
decorate
] (2.5,0.30) -- (4.5,0.30) 
node [pos=0.5,anchor=north,yshift=10] {$\scriptstyle x$};		
\draw [
thick,
decoration={
	brace,
	mirror,
	raise=0
},
decorate
] (1.5,-0.30) -- (2.5,-0.30) 
node [pos=0.5,anchor=north,yshift=0] {$\scriptstyle y$};	
	
\end{tikzpicture}
\]
Then for \(b \colonequals i+ (k-(j+1))\) and \(h \colonequals a+(p-(j+1))\) define 
\begin{align*}
\mathfrak{c}_{a-1,b}&\colonequals
(s_{a-1} \hdots s_{a+x-2})(s_{a-2} \hdots s_{a+x-3})\hdots (s_{i+1} \hdots s_{b+1})(s_{i} \hdots s_{b})\\
\mathfrak{c}_{j+1,x+a-1}&\colonequals
(s_{j+1} \hdots s_{k})(s_{j} \hdots s_{k-1})\hdots (s_{a+1} \hdots s_{x+a})(s_{a} \hdots s_{x+a-1})\\ 
\mathfrak{d}_{k+1,h}&\colonequals
(s_{k+1} \hdots s_{k-y+2})(s_{k+2} \hdots s_{k-y+3})\hdots (s_{p-1} \hdots s_{h-1})(s_{p} \hdots s_{h})\\
\mathfrak{d}_{j+1,k-y+1}&\colonequals
(s_{j+1} \hdots s_{a})(s_{j+2} \hdots s_{a+1})\hdots (s_{k-1} \hdots s_{k-y})(s_{k} \hdots s_{k-y+1})			
\end{align*}
where  for \(\mathfrak{c}\)  each factor has length \(x\), and for \(\mathfrak{d}\)  each factor has length \(y\).
\end{notation}
Note that \(\mathfrak{c}_{\upalpha, \upbeta}\)  should be understood as starting at \(s_{\upalpha}\) and ending at \(s_{\upbeta}\), where each factor has length \(x\), and  the indices decrease by one at each step. The notation \(\mathfrak{d}_{\upalpha,\upbeta}\) should be understood similarly, but each factor has length \(y\) and each step increases the indices.

The following is a general  version of Lemma \ref{2nd of 03 march 2022},  which covered the cases \(x=2\) and \(y=2\) respectively. Recall from $\S \ref{section 2.2}$ that  \(\bar{\mathfrak{c}}_{a-1,b}\) will be the expression for \(\mathfrak{c}_{a-1,b}\) read backwards, namely  \(\bar{\mathfrak{c}}_{a-1,b}=(s_{b} \hdots s_{i})(s_{b+1} \hdots s_{i+1})\hdots (s_{a+x-3} \hdots s_{a-2})(s_{a+x-2} \hdots s_{a-1}) \) with the obvious variations for \(\bar{\mathfrak{d}}.\) 
\begin{lemma}\label{01 of 22 june 2022}
For all \(1\leq i<a\leq j+1 \leq k < p \leq n\), as in Notation \textnormal{\ref{03 april 2022}} set \(x\colonequals k-j\) and \(y \colonequals  j+2 -a \). Then for \(b \colonequals i+ (k-(j+1))\) and \(h \colonequals a+(p-(j+1))\),  the following hold.
\begin{enumerate}
\item \( \begin{aligned}[t]
\underbrace{(s_{k}\hdots s^{2}_{i}\hdots s_{k})\hdots (s_{j+1}\hdots s^{2}_{i}\hdots s_{j+1})}_{x\,\mbox{\textnormal{\scriptsize terms}}} & = \big(s_{k}\hdots s_{a}\big)^{x}\cdot \mathfrak{c}_{a-1,b} \cdot \bar{\mathfrak{c}}_{a-1,b} \cdot \bar{\mathfrak{c}}_{j+1, x+a-1}
\end{aligned}\)
\item \( \begin{aligned}[t]
\underbrace{(s_{j+1}\hdots s^{2}_{p}\hdots s_{j+1})\hdots (s_{a}\hdots s^{2}_{p}\hdots s_{a})}_{y\,\mbox{\textnormal{\scriptsize terms}}} & = \mathfrak{d}_{j+1,k-y+1} \cdot  \mathfrak{d}_{k+1, h}\cdot  \bar{\mathfrak{d}}_{ k+1,h}\cdot  \big(s_{k}\hdots s_{a}\big)^{y}
\end{aligned}\)
\item \( \begin{aligned}[t]
\underbrace{(s_{a}\hdots s^{2}_{p}\hdots s_{a})\hdots (s_{j+1}\hdots s^{2}_{p}\hdots s_{j+1})}_{y\,\mbox{\textnormal{\scriptsize terms}}}  &=\big(s_{a}\hdots s_{k}\big)^{y} \cdot \mathfrak{d}_{k+1,h} \cdot  \bar{\mathfrak{d}}_{k+1, h}\cdot  \bar{\mathfrak{d}}_{ j+1,k-y+1} 
\end{aligned}\)
\item \( \begin{aligned}[t]
\underbrace{(s_{j+1}\hdots s^{2}_{i}\hdots s_{j+1})\hdots (s_{k}\hdots s^{2}_{i}\hdots s_{k})}_{x\,\mbox{\textnormal{\scriptsize terms}}} & = \mathfrak{c}_{j+1,x+a-1} \cdot  \mathfrak{c}_{a-1, b}\cdot  \bar{\mathfrak{c}}_{a-1, b}\cdot \big(s_{a}\hdots s_{k}\big)^{x}.
\end{aligned}\)
\end{enumerate} 
\end{lemma}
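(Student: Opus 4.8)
The plan is to prove part (1) by a direct induction on $x$, using Lemma~\ref{2nd of 03 march 2022} as the base case $x=2$, and then obtain parts (2)--(4) formally from part (1) rather than repeating the calculation four times. First I would set up the induction: the left-hand side of (1) is a product of $x$ conjugated factors $(s_m\hdots s_i^2\hdots s_m)$ for $m$ running from $k$ down to $j+1$. The key move — exactly as in the proof of Lemma~\ref{2nd of 03 march 2022} — is to peel off the leftmost factor $(s_k\hdots s_i^2\hdots s_k)$, split it as $(s_k\hdots s_{a})(s_{a-1}\hdots s_i^2\hdots s_{a-1})(s_a\hdots s_k)$, and then commute the outermost block $(s_k\hdots s_a)$ as far to the left as it will go past the remaining $x-1$ factors (which only involve $s_i,\dots,s_{k-1}$ heavily, so far-apart generators commute freely), while simultaneously recognising that the trailing $(s_a\hdots s_k)$ merges with the next factor's leading run. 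Repeating this $x$ times produces the prefix $(s_k\hdots s_a)^x$, and what is left over is precisely a product of $x$ conjugated factors over the \emph{smaller} index range $[i,b]$ with $b=i+(k-(j+1))$, i.e. the same shape with $x$ replaced by $x-1$ and the conjugating letters shifted — this is where I would invoke the inductive hypothesis. Tracking which letters pile up on the left versus which get pushed to the right is what builds the words $\mathfrak{c}_{a-1,b}$, $\bar{\mathfrak{c}}_{a-1,b}$ and $\bar{\mathfrak{c}}_{j+1,x+a-1}$; the definitions in Notation~\ref{03 april 2022}, with each $\mathfrak{c}$-factor of length $x$ and indices decreasing by one per step, are engineered exactly so that the bookkeeping closes up.

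Once (1) is established, I would deduce (4) by applying the reversing antiautomorphism $g\mapsto\bar g$ of $\mathrm{Br}_{n+1}$ (recalled before Lemma~\ref{09 july 2021}): each conjugated factor $(s_m\hdots s_i^2\hdots s_m)$ is a palindrome, hence fixed by the bar map, so reversing the product in (1) just reverses the order of the $x$ factors — which is precisely the left-hand side of (4) — while the right-hand side $\big(s_k\hdots s_a\big)^x\cdot\mathfrak{c}_{a-1,b}\cdot\bar{\mathfrak c}_{a-1,b}\cdot\bar{\mathfrak c}_{j+1,x+a-1}$ becomes $\mathfrak{c}_{j+1,x+a-1}\cdot\mathfrak{c}_{a-1,b}\cdot\bar{\mathfrak c}_{a-1,b}\cdot\big(s_a\hdots s_k\big)^x$, matching (4) on the nose. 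Parts (2) and (3) are the same two statements with the roles of the two ``arms'' swapped: replace $i$ by $p$, reverse the direction of the index runs (so decreasing runs become increasing, $x$ becomes $y$, $\mathfrak c$ becomes $\mathfrak d$, and $a-1,b$ are replaced by $k+1,h$), and rerun the identical argument; alternatively one can apply the graph automorphism of $A_n$ that flips the Dynkin diagram, which sends the configuration $(\scrA,\scrB,\scrC)$ to one of the same type with $x\leftrightarrow y$. I would state this symmetry once and say ``the other cases are entirely analogous,'' mirroring the style of Lemmas~\ref{09 july 2021}--\ref{2nd of 03 march 2022}.

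The main obstacle I expect is purely combinatorial: making the inductive step in (1) genuinely rigorous rather than hand-wavy. The commutation ``$(s_k\hdots s_a)$ slides to the left past the remaining factors'' is only literally true after one checks that every generator in $(s_k\hdots s_a)$ either commutes with, or gets absorbed into, each generator it passes — and near the boundary index $a-1$ (equivalently when $\scrB$ abuts $\scrA$, i.e. $a=j+1$ so $x$ is as large as possible, or $a$ small) there are degenerate overlaps where a naive ``free commutation'' claim fails and one must instead use a braid relation $s_ms_{m-1}s_m=s_{m-1}s_ms_{m-1}$. I would handle this by proving a short auxiliary identity — essentially ``$(s_k\hdots s_a)\cdot(s_{m}\hdots s_i^2\hdots s_m)=(s_{m+1}\hdots s_i^2\hdots s_{m+1})\cdot(s_k\hdots s_a)$ for $m<k$'' together with a boundary version — and then the induction on $x$ becomes a clean repeated application of it, exactly parallel to the ``repeat the above step'' line in the proof of Lemma~\ref{03 march 2022}. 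Verifying the index arithmetic ($b=i+(k-j-1)$, factor lengths, the ranges on $\mathfrak{c}_{a-1,b}$ versus $\mathfrak{c}_{j+1,x+a-1}$) against a small explicit case, say $x=3$, is the sanity check I would run before committing to the general formula.
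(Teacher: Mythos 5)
Your proposal follows essentially the same route as the paper: the paper also proves (1) (and (2)) by checking the cases $x=1$ and $x=2$ via Lemma~\ref{2nd of 03 march 2022} and then iterating that same manipulation for general $x$ (resp.\ $y$), and it obtains the remaining parts by applying the reversing antiautomorphism $g\mapsto\bar{g}$, using that each factor $(s_m\hdots s_i^2\hdots s_m)$ is palindromic. The only cosmetic difference is which parts you derive by the bar map versus by rerunning the symmetric argument ((2)--(3) versus (3)--(4)), and your flagged concern about boundary indices in the iteration is precisely the step the paper also leaves at the level of ``repeating the proof of Lemma~\ref{2nd of 03 march 2022}''.
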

\begin{proof}
(1)~Consider  \((s_{k}\hdots s^{2}_{i}\hdots s_{k})\hdots (s_{j+1}\hdots s^{2}_{i}\hdots s_{j+1})\), where there are the \(x\) products. If  \(x=1\) then \(k=j+1\) and \(b=i\), so \begin{align*}\underbrace{(s_{k}\hdots s^{2}_{i}\hdots s_{k})\hdots (s_{j+1}\hdots s^{2}_{i}\hdots s_{j+1})}_{x\,\mbox{\scriptsize terms}}
&=s_{j+1}\hdots s^{2}_{b}\hdots s_{j+1}\\
&=(s_{j+1}\hdots s_{a})(s_{a-1} \hdots  s_{b})(s_{b}\hdots s_{a-1}) (s_{a} \hdots s_{j+1})\\
&=\big(s_{j+1}\hdots s_{a}\big)^{1}\cdot\mathfrak{c}_{a-1,b} \cdot \bar{\mathfrak{c}}_{a-1,b} \cdot \bar{\mathfrak{c}}_{j+1, x+a-1}.
\end{align*}
When \( x=2\) then \(k=j+2\)  and  the result follows by  Lemma \ref{2nd of 03 march 2022} since the expression  \((s_{k}\hdots s^{2}_{i}\hdots s_{k})\hdots (s_{j+1}\hdots s^{2}_{i}\hdots s_{j+1})\) equals  \[\scriptstyle{\big(s_{j+2}\hdots s_{a}\big)^{2}\big((s_{a-1}s_{a}) (s_{a-2}s_{a-1}) \hdots (s_{i}s_{i+1})\big)\big((s_{i+1}s_{i})\hdots (s_{a-1}s_{a-2})(s_{a}s_{a-1})\big)\big((s_{a+1}s_{a}) \hdots  (s_{j+2}s_{j+1})\big), }\] since \(b=i+1\). Now for general \(x\), repeating the proof of Lemma \ref{2nd of 03 march 2022}, in a similar way the product of \(x\) terms \((s_{k}\hdots s^{2}_{i}\hdots s_{k})\hdots (s_{j+1}\hdots s^{2}_{i}\hdots s_{j+1})\) equals
\[\scriptstyle{\big(s_{k}\hdots s_{a}\big)^{x}\big((s_{a-1}\hdots s_{a+x-2}) \hdots ( s_{i}\hdots s_{b})\big)\big((s_{b}\hdots s_{i})\hdots (s_{a+x-2} \hdots s_{a-1})\big)\big((s_{a+x-1} \hdots s_{a}) \hdots  (s_{k} \hdots s_{j+1})\big)}.\]
(2)~ Consider  \((s_{j+1}\hdots s^{2}_{p}\hdots s_{j+1})\hdots (s_{a}\hdots s^{2}_{p}\hdots s_{a})\), where there are  \(y\) products. If \(y=1\), then 
\(a=j+1\) and \(h=p\), so
\begin{align*}
\underbrace{(s_{j+1}\hdots s^{2}_{p}\hdots s_{j+1})\hdots (s_{a}\hdots s^{2}_{p}\hdots s_{a})}_{y\,\mbox{\scriptsize terms}}&=s_{j+1}\hdots s^{2}_{p}\hdots s_{j+1}\\
&=(s_{j+1}\hdots s_{k})(s_{k+1}\hdots s_{p})(s_{p}\hdots s_{k+1})(s_{k}\hdots s_{j+1})\\
&=\mathfrak{d}_{j+1,k-y+1} \cdot  \mathfrak{d}_{k+1, h}\cdot  \bar{\mathfrak{d}}_{ k+1,h}\cdot \big(s_{k}\hdots s_{j+1}\big)^{1}.
\end{align*}
When \( y=2\) then \(a=j\)  and  the result follows since  by  Lemma \ref{2nd of 03 march 2022} the statement  \((s_{j+1}\hdots s^{2}_{p}\hdots s_{j+1})\hdots (s_{a}\hdots s^{2}_{p}\hdots s_{a})\) equals
 \[\scriptstyle{\big((s_{j+1}s_{j})(s_{j+2}s_{j+1})\hdots (s_{k}s_{k-1})\big) \big((s_{k+1}s_{k}) \hdots (s_{p}s_{p-1})\big)\big((s_{p-1}s_{p})\hdots (s_{k+1}s_{k+2})(s_{k}s_{k+1})\big)\big(s_{k}\hdots s_{j}\big)^{2}, }\] since \(h=p-1\).
Now for general  \(y\), repeating the proof of Lemma \ref{2nd of 03 march 2022}, in a similar way the product of \(y\) terms \((s_{j+1}\hdots s^{2}_{p}\hdots s_{j+1})\hdots (s_{a}\hdots s^{2}_{i}\hdots s_{a})\) equals 
\[\scriptstyle{\big((s_{j+1}\hdots s_{a}) \hdots (s_{k}\hdots s_{k-y+1})\big)\big(s_{k+1}\hdots s_{k-y+2}) \hdots (s_{p}\hdots s_{h})\big)\big((s_{h}\hdots s_{p})\hdots (s_{k-y+2} \hdots s_{k+1})\big)\big(s_{k }\hdots s_{a}\big)^{y}}.\]
 The statements (3)  and (4) follow by applying  the antiautomorphism \(g\mapsto \bar{g},\) that is to say by reading (1) and (2) backwards.
\end{proof}
Leading into the next results, observe that \(a+x-1=k-y+1,\) as both equal \(k-j +a-1.\) 
\begin{cor} \label{02 of 22 june 2022}
With notation as in  Notation \textnormal{\ref{03 april 2022}} 
\(\mathfrak{c}_{j+1, a+x-1}=\mathfrak{d}_{j+1,k-y+1}\)	 and furthermore  \(\bar{\mathfrak{c}}_{j+1,a+x-1}= \bar{\mathfrak{d}}_{j+1,k-y+1}.\)
\end{cor}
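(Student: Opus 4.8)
The plan is to show that \(\mathfrak{c}_{j+1,a+x-1}\) and \(\mathfrak{d}_{j+1,k-y+1}\) are, letter for letter, the \emph{same} positive word up to repeatedly commuting distant generators \(s_ps_q=s_qs_p\) with \(|p-q|\ge 2\); the barred identity will then follow at once by applying the antiautomorphism \(g\mapsto\bar g\) to the result. First I would rewrite the two words transparently using \(k=j+x\) and \(a=j+2-y\). By Notation \ref{03 april 2022}, \(\mathfrak{c}_{j+1,a+x-1}\) is the product of the \(y\) length-\(x\) ascending blocks \(s_{j+1-\mu}s_{j+2-\mu}\cdots s_{j+x-\mu}\) for \(\mu=0,\dots,y-1\), while \(\mathfrak{d}_{j+1,k-y+1}\) is the product of the \(x\) length-\(y\) descending blocks \(s_{j+1+\lambda}s_{j+\lambda}\cdots s_{j+2-y+\lambda}\) for \(\lambda=0,\dots,x-1\). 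Both are words of length \(xy\).

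Next I would set up a bookkeeping bijection on letters. Index the letter in position \(t\in\{0,\dots,x-1\}\) of block \(\mu\) of \(\mathfrak{c}_{j+1,a+x-1}\) by the pair \((\mu,t)\); this letter is \(s_{j+1-\mu+t}\). Likewise index the letter in position \(u\in\{0,\dots,y-1\}\) of block \(\lambda\) of \(\mathfrak{d}_{j+1,k-y+1}\) by \((\lambda,u)\); this letter is \(s_{j+1+\lambda-u}\). Then \((\mu,t)\mapsto(\lambda,u)\colonequals(t,\mu)\) is a bijection between the two letter sets which preserves the generator, since \(j+1-\mu+t=j+1+t-\mu\). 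In these coordinates \(\mathfrak{c}_{j+1,a+x-1}\) reads its letters in lexicographic order of \((\mu,t)\), and \(\mathfrak{d}_{j+1,k-y+1}\) reads the corresponding letters in lexicographic order of \((t,\mu)\) — the two ways of scanning a rectangular grid, by rows versus by columns.

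The key step is the observation that makes only far-commutativity necessary: two distinct letters \((\mu,t)\) and \((\mu',t')\) occur in opposite relative order in the two words precisely when \(\mu<\mu'\) and \(t>t'\) (or symmetrically). For such an inverted pair the generators are \(s_{j+1-\mu+t}\) and \(s_{j+1-\mu'+t'}\), whose indices differ by \((\mu'-\mu)+(t-t')\ge 1+1=2\); hence they commute in \(\mathrm{Br}_{n+1}\). Transforming \(\mathfrak{c}_{j+1,a+x-1}\) into \(\mathfrak{d}_{j+1,k-y+1}\) — say by a bubble sort interpolating between the two linear orders — only ever requires swapping letters that are adjacent in the current word and form such an inverted pair, so every swap is a legal commutation. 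This yields \(\mathfrak{c}_{j+1,a+x-1}=\mathfrak{d}_{j+1,k-y+1}\) in \(\mathrm{Br}_{n+1}\), and applying \(g\mapsto\bar g\) (which reverses words, hence carries \(\mathfrak{c}\) to \(\bar{\mathfrak{c}}\) and \(\mathfrak{d}\) to \(\bar{\mathfrak{d}}\)) gives the second identity.

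I expect the only genuine obstacle to be bookkeeping: correctly identifying the generator sitting in position \((\mu,t)\), keeping the ranges of \(\mu,t,\lambda,u\) straight, and confirming that every pair of letters that must be transposed has generator indices differing by at least \(2\) — once the grid picture is in place this last point is the one-line inequality above, but an off-by-one slip anywhere would break it. As sanity checks I would verify the extreme cases \(x=1\) and \(y=1\), where one of the two words degenerates to a single monotone run and equality is immediate, and the small cases underlying Lemma \ref{2nd of 03 march 2022}. An alternative, if the combinatorics got unwieldy, would be to check that both words are reduced expressions for one and the same permutation in \(\mathfrak{S}_{n+1}\) and invoke the Matsumoto--Tits theorem; but the direct commutation argument is cleaner and stays within the paper's toolkit.
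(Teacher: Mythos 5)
Your argument is correct and is in essence the paper's own proof: both establish \(\mathfrak{c}_{j+1,a+x-1}=\mathfrak{d}_{j+1,k-y+1}\) purely by far-away commutations \(s_ps_q=s_qs_p\) for \(|p-q|\ge 2\) (the paper by repeatedly pulling the first letter of each length-\(x\) block to the front so as to peel off the length-\(y\) blocks of \(\mathfrak{d}\), you by the equivalent row-versus-column reading of the letter grid together with the inversion estimate \((\mu'-\mu)+(t-t')\ge 2\)), and both deduce \(\bar{\mathfrak{c}}_{j+1,a+x-1}=\bar{\mathfrak{d}}_{j+1,k-y+1}\) by applying the antiautomorphism \(g\mapsto\bar g\). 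Your grid/bubble-sort bookkeeping is simply a more systematic write-up of the same commutation rearrangement, and it handles the degenerate cases \(x=1\) or \(y=1\) uniformly.
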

\begin{proof}
When \(x=y=1\), or equivalently when  \( a=k=j+1\) , then    \(\mathfrak{c}_{j+1,a+x-1}=\mathfrak{c}_{j+1,j+1} =s_{j+1}=\mathfrak{d}_{j+1,j+1}= \mathfrak{d}_{j+1,k-y+1}.\)

When \(x,y \geq 2, \) by pulling the first element in each factor to the left as follows
\[
\begin{tikzpicture}[>=stealth]
\node at (0,0) {$\mathfrak{c}_{j+1,a+x-1}=(s_{j+1}\,\,\,\,\,  s_{j+2} \hdots s_{k})(s_{j}s_{j+1}\hdots s_{k-1})\hdots (s_{a}\hdots s_{a+x-1}),$};
\draw (-0.06,-0.15) -- (-0.06,-0.4) -- (-2.08,-0.4);
\draw[->] (-2.08,-0.4) -- (-2.08,-0.15);
\draw (2.3,-0.15) -- (2.3,-0.5) -- (-2.0,-0.5);
\draw[->] (-2.0,-0.5) -- (-2.0,-0.15);
\draw (3,-0.15) -- (3,-0.6) -- (-1.92,-0.6);
\draw[->] (-1.92,-0.6) -- (-1.92,-0.15);					
\end{tikzpicture} 
\]
we see that \(\mathfrak{c}_{j+1,a+x-1}=(s_{j+1}s_{j}  \hdots s_{a})(s_{j+2}\hdots s_{k})(s_{j+1}\hdots s_{k-1})\hdots (s_{a-1}\hdots s_{a+x-1}).\) Repeating, pulling again the following to the left 
\[
\begin{tikzpicture}[>=stealth]
\node at (0,0) {$\mathfrak{c}_{j+1,a+x-1}=(s_{j+1}s_{j}  \hdots s_{a})(s_{j+2}\,\,\,\,\hdots s_{k})(s_{j+1}\hdots s_{k-1})\hdots (s_{a-1}\hdots s_{a+x-1}),$};
\draw (0.65,-0.15) -- (0.65,-0.4) -- (-0.7,-0.4);
\draw[->] (-0.7,-0.4) -- (-0.7,-0.15);
\draw (3.4,-0.15) -- (3.4,-0.5) -- (-0.6,-0.5);
\draw[->] (-0.6,-0.5) -- (-0.6,-0.15);					
\end{tikzpicture} 
\] 
we see that \(\scriptstyle{\mathfrak{c}_{j+1,a+x-1}=(s_{j+1}s_{j}  \hdots s_{a})(s_{j+2}s_{j+1}\hdots s_{a-1})(s_{j+3}\hdots s_{k})(s_{j+2}\hdots s_{k-1})\hdots (s_{a-2}\hdots s_{a+x-1})}.\) Repeating,  \(\mathfrak{c}_{j+1,a+x-1}\) can be written as a product, each factor of length \(y\).  By definition, this is \(\mathfrak{d}_{j+1,a+x-1}=\mathfrak{d}_{j+1,k-y+1}.\)  The final statement follows by applying  the antiautomorphism \(g\mapsto \bar{g}.\)
\end{proof}
The following result proves various relations hold between the squares of longest elements over  connected subgraphs. The relations are  either  `far away commutativity' when there exists at least two edges between the subgraphs, `inclusion commutativity' when one of the subgraphs is contained in the other, or length five palindromic relations which we refer to as the box relations (as explained in Remark \ref{01 of 08 july 2022}).
\begin{prop}\label{3 of 29 january 2022}
Let  \(\scrJ,\scrK \subseteq A_{n}\) be connected. Then the following hold
\begin{enumerate}
\item \(\ell_{\scrJ}^{2}\ell_{\scrK}^{2}=\ell_{\scrK}^{2}\ell_{\scrJ}^{2}\) if \(d(\scrJ,\scrK)\geq 2\).
\item \(\ell_{\scrJ}^{2}\ell_{\scrK}^{2}=\ell_{\scrK}^{2}\ell_{\scrJ}^{2}\) if \(\scrJ \subseteq \scrK\) or \(\scrK \subseteq \scrJ.\)
\item There is an equality \[\ell_{i,k}^{-2}\,\ell_{i,j}^{2}\,\ell_{a,k}^{2}\,\ell_{j+2,p}^{2}\,\ell_{a,p}^{-2}=\ell_{a,p}^{-2}\,\ell_{j+2,p}^{2}\,\ell_{a,k}^{2}\,\ell_{i,j}^{2}\,\ell_{i,k}^{-2}\] whenever \(1\leq i<a\leq j+1 \leq k < p \leq n\).
	\end{enumerate}		
\end{prop}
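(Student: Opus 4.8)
The plan is to handle (1) and (2) directly and to recast (3) as a single palindromy statement before attacking the combinatorics.

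\emph{Parts (1) and (2).} If $d(\scrJ,\scrK)\ge 2$, write $\scrJ=[i,j]$ and $\scrK=[k,m]$ with $j<k$, so $k\ge j+2$; then every Artin generator appearing in $\ell_{\scrJ}$ has index in $\{i,\dots,j\}$, every one appearing in $\ell_{\scrK}$ has index in $\{k,\dots,m\}$, and any two such indices differ by at least $2$. Hence $\ell_{\scrJ}\ell_{\scrK}=\ell_{\scrK}\ell_{\scrJ}$, and a fortiori $\ell_{\scrJ}^{2}\ell_{\scrK}^{2}=\ell_{\scrK}^{2}\ell_{\scrJ}^{2}$. If instead $\scrJ\subseteq\scrK=[u,v]$, then $\ell_{\scrJ}^{2}$ lies in the parabolic braid subgroup $\langle s_{u},\dots,s_{v}\rangle$, a braid group of type $\mathrm{A}$, whose full twist is $\ell_{\scrK}^{2}=(s_{u}\cdots s_{v})^{v-u+2}$ by Lemma~\ref{03 march 2022}; writing $\Delta=\ell_{\scrK}$ for the half-twist and using $\Delta s_{r}\Delta^{-1}=s_{u+v-r}$, the square $\Delta^{2}$ centralises every generator of $\langle s_{u},\dots,s_{v}\rangle$, so $\ell_{\scrK}^{2}$ is central there and in particular commutes with $\ell_{\scrJ}^{2}$. (Equivalently, one may quote the classical fact that the full twist generates the centre of a type-$\mathrm{A}$ braid group.)

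\emph{Part (3).} The first move is to note that applying the word-reversal antiautomorphism $g\mapsto\bar g$, which fixes each $\ell_{u,v}$ (see the proof of Lemma~\ref{09 july 2021}), to the left-hand side of (3) produces exactly its right-hand side. Thus (3) is equivalent to the single assertion that
\[
L\colonequals \ell_{i,k}^{-2}\,\ell_{i,j}^{2}\,\ell_{a,k}^{2}\,\ell_{j+2,p}^{2}\,\ell_{a,p}^{-2}
\]
is palindromic, i.e.\ $\bar L=L$, and that is what I would prove. To expose the word structure of $L$ I would iterate Corollary~\ref{2nd 07 july 2021}: peeling the largest index off $[i,k]$ down to $[i,j]$ on the right gives $\ell_{i,k}^{2}=\ell_{i,j}^{2}\,W$ with $W$ the left-hand side of Lemma~\ref{01 of 22 june 2022}(4), and peeling the smallest index off $[a,p]$ down to $[j+2,p]$ on the left gives $\ell_{a,p}^{2}=U\,\ell_{j+2,p}^{2}$ with $U$ the left-hand side of Lemma~\ref{01 of 22 june 2022}(3); the inner squares cancel, so $\ell_{i,k}^{-2}\ell_{i,j}^{2}=W^{-1}$, $\ell_{j+2,p}^{2}\ell_{a,p}^{-2}=U^{-1}$, and $L=W^{-1}\ell_{a,k}^{2}U^{-1}$. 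Now substitute: Lemma~\ref{01 of 22 june 2022} rewrites $W=\mathfrak{c}_{j+1,x+a-1}\,\mathfrak{c}_{a-1,b}\,\bar{\mathfrak{c}}_{a-1,b}\,(s_{a}\cdots s_{k})^{x}$ and $U=(s_{a}\cdots s_{k})^{y}\,\mathfrak{d}_{k+1,h}\,\bar{\mathfrak{d}}_{k+1,h}\,\bar{\mathfrak{d}}_{j+1,k-y+1}$ in the notation of Notation~\ref{03 april 2022}; Lemma~\ref{03 march 2022} gives $\ell_{a,k}^{2}=(s_{a}\cdots s_{k})^{x+y}$ since $x+y=k-a+2$; and Corollary~\ref{02 of 22 june 2022} supplies the bridge $\mathfrak{c}_{j+1,x+a-1}=\mathfrak{d}_{j+1,k-y+1}$ (legitimate because $x+a-1=k-y+1$) together with its reverse. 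Feeding these in turns $L$ into an explicit word in the $s_{m}$ in which the central full twist $(s_{a}\cdots s_{k})^{x+y}=\ell_{a,k}^{2}$ is flanked by $\mathfrak{c}_{j+1,x+a-1}^{-1}$ and $\bar{\mathfrak{d}}_{j+1,k-y+1}^{-1}$, both lying in $\langle s_{a},\dots,s_{k}\rangle$, with the palindromic pairs $\mathfrak{c}_{a-1,b}\bar{\mathfrak{c}}_{a-1,b}$, $\mathfrak{d}_{k+1,h}\bar{\mathfrak{d}}_{k+1,h}$ and the outer powers $(s_{a}\cdots s_{k})^{\pm}$ on the outside.

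The crux, and the step I expect to be hardest, is to verify that this explicit word is genuinely fixed by reversal. The moves available are precisely: the centrality of $\ell_{a,k}^{2}$ inside $\langle s_{a},\dots,s_{k}\rangle$, which lets $\mathfrak{c}_{j+1,x+a-1}^{-1}$ and $\bar{\mathfrak{d}}_{j+1,k-y+1}^{-1}$ slide past the full twist; the bridge identity of Corollary~\ref{02 of 22 june 2022}, which collapses those two middle factors against each other; and the ``far commutativity'' forced by the index ranges (the $\mathfrak{c}$-blocks use indices $\le a+x-2$, the $\mathfrak{d}$-blocks indices $\ge a+x$, a gap of $2$, so such blocks commute past one another). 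Driving these through to conclude $\bar L=L$ is a careful bookkeeping argument; I would carry it out either by pushing the $\mathfrak{c}/\mathfrak{d}$-calculus of Notation~\ref{03 april 2022} and Lemma~\ref{01 of 22 june 2022} to the finish, or, probably more cleanly, by induction on $|\scrB|$, using Corollary~\ref{2nd 07 july 2021} to shrink the pair $(x,y)$ one unit at a time down to the transparent base cases $x=1$ and $y=1$, where $\ell_{a,k}^{2}$ reduces to a single $s_{m}$ (or a small power) that cancels directly against the peeled factors on either side. It is exactly to make this reduction manageable that Notation~\ref{03 april 2022}, Lemma~\ref{01 of 22 june 2022} and Corollary~\ref{02 of 22 june 2022} are set up.
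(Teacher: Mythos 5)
Parts (1) and (2) are correct and coincide with the paper's argument: far commutativity of the underlying Artin generators for (1), and centrality of the full twist $\ell_{\scrK}^{2}$ in the parabolic subgroup for (2). Your reframing of (3) as the single assertion that $L=\ell_{i,k}^{-2}\ell_{i,j}^{2}\ell_{a,k}^{2}\ell_{j+2,p}^{2}\ell_{a,p}^{-2}$ is fixed by the word--reversal antiautomorphism is also sound (reversal fixes each $\ell_{u,v}^{\pm2}$ and sends the left side of (3) to the right side), and it makes explicit a symmetry the paper only uses implicitly, namely that Lemma~\ref{01 of 22 june 2022}(3),(4) are the reversals of (1),(2). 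The factorisations $\ell_{i,k}^{-2}\ell_{i,j}^{2}=W^{-1}$ and $\ell_{j+2,p}^{2}\ell_{a,p}^{-2}=U^{-1}$ obtained by iterating Corollary~\ref{2nd 07 july 2021} are likewise correct.

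The problem is that the step you yourself identify as the crux is not carried out, and it is exactly the computational heart of the paper's proof. With your choice of orientation ($W$, $U$ taken from parts (4) and (3) of Lemma~\ref{01 of 22 june 2022}), the word you reach is
\[
(s_{a}\cdots s_{k})^{-x}\,(\mathfrak{c}_{a-1,b}\bar{\mathfrak{c}}_{a-1,b})^{-1}\,\mathfrak{c}_{j+1,x+a-1}^{-1}\;\ell_{a,k}^{2}\;\bar{\mathfrak{d}}_{j+1,k-y+1}^{-1}\,(\mathfrak{d}_{k+1,h}\bar{\mathfrak{d}}_{k+1,h})^{-1}\,(s_{a}\cdots s_{k})^{-y},
\]
in which the outer powers are separated from the central full twist by blocks involving indices outside $[a,k]$, so they do not commute past and nothing cancels on the nose; by contrast, the paper substitutes via parts (1) and (2) of Lemma~\ref{01 of 22 june 2022}, which places the powers adjacent to $(s_{k}\cdots s_{a})^{x+y}$ where they cancel immediately. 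From your expression, the three moves you list (centrality of $\ell_{a,k}^{2}$, the bridge of Corollary~\ref{02 of 22 june 2022}, far commutativity of the $\mathfrak{c}_{a-1,b}$- and $\mathfrak{d}_{k+1,h}$-blocks) do not visibly close the palindromy check: one still has to perform essentially the entire chain the paper displays --- cancel the powers against the twist, interchange the middle pairs by far commutativity, apply the bridge, regroup, reinsert the powers, apply Lemma~\ref{01 of 22 june 2022} in the opposite orientation and undo the factorisation by Corollary~\ref{2nd 07 july 2021}. Your alternative plan, induction on $|\scrB|$ shrinking $(x,y)$ to the base cases $x=1$ or $y=1$, is also unexecuted and not obviously easier, since shrinking $\scrB$ changes the generators $\ell_{a,k}^{2}$, $\ell_{i,k}^{2}$, $\ell_{a,p}^{2}$ occurring in the relation, and relating the statements for $(x,y)$ and $(x-1,y)$ needs precisely the identities of Lemma~\ref{01 of 22 june 2022}. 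So the set-up is right and matches the paper's toolkit, but the actual verification of (3) is missing.
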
			

\begin{proof}
(1) Since \(\ell_{\scrJ}^{2}\) consists of only \(s_{j}\) with  \(j \in \scrJ\),  and \(\ell_{\scrK}^{2}\) consists of only \(s_{k}\) with  \(k \in \scrK\), the result follows from braid relations, since by  assumption  \(s_{j}\) commutes with \(s_{k}\) whenever \(j\in \scrJ\)  and \(k\in \scrK\).\\
(2) Without loss of generality we can consider the case  \(\scrJ \subsetneq \scrK.\)  The statement follows since \(\ell_{\scrK}^{2}\) is central in the parabolic subgroup of \(\mathrm{Br}_{A_{n}}\) generated by \(\scrK\) (see  e.g \cite[Theorem 7]{G}).
(3) Recall from Lemma~\ref{03 march 2022} that
\begin{align}
\ell_{a,k}^{2}&= (s_{k} \hdots s_{a})^{(k-a)+2}= (s_{a} \hdots s_{k})^{(k-a)+2}\label{longest symm}.
\end{align}
Using Corollary \ref{2nd 07 july 2021} repeatedly, we can factor \(\ell_{i,k}^{-2}\) and \(\ell_{a,p}^{-2}\) to obtain
\begin{align}
\ell_{i,k}^{-2}&= \big(\ell_{i,j}^{2}\underbrace{(s_{k}\hdots s^{2}_{i}\hdots s_{k})\hdots (s_{j+1}\hdots s^{2}_{i}\hdots s_{j+1})}_{k-j\,\mbox{\scriptsize terms}} \big)^{-1}  \label{eqn 1}\\
\ell_{a,p}^{-2} &=\big(\underbrace{(s_{j+1}\hdots s^{2}_{p}\hdots s_{j+1})\hdots (s_{a}\hdots s^{2}_{p}\hdots s_{a}}_{j+2-a\,\mbox{\scriptsize terms}} )\ell_{j+2,p}^{2}\big)^{-1} \label{eqn 2}
\end{align}	
Substituting in \eqref{eqn 1} and \eqref{eqn 2}, then $\ell_{i,k}^{-2}\ell_{i,j}^{2}\ell_{a,k}^{2}\ell_{j+2,p}^{2}\ell_{a,p}^{-2}$ equals
\[
\big((s_{k}\hdots s^{2}_{i}\hdots s_{k})\hdots (s_{j+1}\hdots s^{2}_{i}\hdots s_{j+1}) \big)^{-1}\ell_{a,k}^{2}\big((s_{j+1}\hdots s^{2}_{p}\hdots s_{j+1})\hdots (s_{a}\hdots s^{2}_{p}\hdots s_{a})\big)^{-1}.
		\]
By applying   Lemma \ref{01 of 22 june 2022} to the outer terms, and \eqref{longest symm} to the middle term, the above displayed equation equals 
\[\scriptstyle{\big((s_{k}\hdots s_{a})^{x} \cdot \mathfrak{c}_{a-1,b} \cdot \bar{\mathfrak{c}}_{a-1,b}\cdot \bar{\mathfrak{c}}_{j+1,a+x-1}\big)^{-1} \cdot(s_{k}\hdots s_{a})^{x+y} \cdot \big(\mathfrak{d}_{j+1,k-y+1} \cdot  \mathfrak{d}_{k+1,h} \cdot \bar{\mathfrak{d}}_{k+1,h}\cdot (s_{k}\hdots s_{a})^{y}\big)^{-1}, } \]
where \(b:=i+ (k-(j+1))\), \(h:=a+(p-(j+1))\), with \(x\) and \(y\) as in Notation \ref{03 april 2022}.
		
By the obvious cancellations, it follows that
\begin{align*}&\ell_{i,k}^{-2}\,\ell_{i,j}^{2} \,\ell_{a,k}^{2} \,\ell_{j+2,p}^{2} \,\ell_{a,p}^{-2}\\
&= \big(\mathfrak{c}_{a-1,b} \cdot \bar{\mathfrak{c}}_{a-1,b}\cdot \bar{\mathfrak{c}}_{j+1,x+a-1}\big)^{-1}  \cdot \big(\mathfrak{d}_{j+1,k-y+1} \cdot  \mathfrak{d}_{k+1,h} \cdot \bar{\mathfrak{d}}_{k+1,h}\big)^{-1}\\
&= \big( \bar{\mathfrak{c}}_{j+1,x+a-1}\big)^{-1}\cdot \big(\mathfrak{c}_{a-1,b} \cdot \bar{\mathfrak{c}}_{a-1,b} \big)^{-1}  \cdot \big(  \mathfrak{d}_{k+1,h} \cdot \bar{\mathfrak{d}}_{k+1,h}\big)^{-1}\cdot  \big(\mathfrak{d}_{j+1,k-y+1}\big)^{-1}\\
&= \big( \bar{\mathfrak{c}}_{j+1,x+a-1}\big)^{-1}  \cdot \big(  \mathfrak{d}_{k+1,h} \cdot \bar{\mathfrak{d}}_{k+1,h}\big)^{-1}\cdot \big(\mathfrak{c}_{a-1,b} \cdot \bar{\mathfrak{c}}_{a-1,b} \big)^{-1} \cdot \big(\mathfrak{d}_{j+1,k-y+1}\big)^{-1} \tag{middle terms commute}\\
&=\big(\bar{\mathfrak{d}}_{j+1,k-y+1}\big)^{-1}  \cdot \big( \mathfrak{d}_{k+1,h} \cdot \bar{\mathfrak{d}}_{k+1,h}\big)^{-1} \cdot  \big(\mathfrak{c}_{a-1,b} \cdot \bar{\mathfrak{c}}_{a-1,b} \big)^{-1} \cdot \big(\mathfrak{c}_{j+1,x+a-1}\big)^{-1} \tag{Corollary \ref{02 of 22 june 2022}}\\
&=  \big(   \mathfrak{d}_{k+1,h} \cdot \bar{\mathfrak{d}}_{k+1,h}\cdot \bar{\mathfrak{d}}_{j+1,k-y+1}\big)^{-1} \cdot  \big(\mathfrak{c}_{j+1,x+a-1} \cdot \mathfrak{c}_{a-1,b} \cdot \bar{\mathfrak{c}}_{a-1,b} \big)^{-1} 
\end{align*}
By backward substitution, \(\ell_{i,k}^{-2}\,\ell_{i,j}^{2}\,\ell_{a,k}^{2}\,\ell_{j+2,p}^{2}\,\ell_{a,p}^{-2}\)   is then equal to
		
\begin{equation}\label{eqn 4} \scriptstyle{\big((s_{a}\hdots s_{k})^{y} \cdot  \mathfrak{d}_{k+1,h} \cdot \bar{\mathfrak{d}}_{k+1,h}\cdot \bar{\mathfrak{d}}_{j+1,k-y+1}\big)^{-1} \cdot (s_{a}\hdots s_{k})^{x+y} \cdot  \big( \mathfrak{c}_{j+1,a+x-1} \cdot \mathfrak{c}_{a-1,b} \cdot \bar{\mathfrak{c}}_{a-1,b}\cdot (s_{a}\hdots s_{k})^{x}  \big)^{-1} }
\end{equation}
By  Lemma \ref{01 of 22 june 2022}, \eqref{eqn 4} is equal to 
\[
\big((s_{a}\hdots s^{2}_{p}\hdots s_{a})\hdots (s_{j+1}\hdots s^{2}_{p}\hdots s_{j+1})\big)^{-1}\ell_{a,k}^{2}\big((s_{j+1}\hdots s^{2}_{i}\hdots s_{j+1})\hdots (s_{k}\hdots s^{2}_{i}\hdots s_{k})\big)^{-1},
\]
which by further backward substitution  equals  \(\ell_{a,p}^{-2}\,\ell_{j+2,p}^{2}\,\ell_{a,k}^{2}\,\ell_{i,j}^{2}\,\ell_{i,k}^{-2},\) as required.		
\end{proof}

\subsection{Proof of all relations}\label{sectionb}
Set  \(G \coloneqq \langle A_{i,j} \mid R_1 \rangle\) and \(H \coloneqq \langle \gen_{i,j}^{\phantom 2} \mid R_2 \rangle\), where \(R_1\) are the relations in  \textsection \ref{classical presentation} and \( R_2\) are the commutator and box relations in Proposition \ref{3 of 29 january 2022} (substituting $\gen_{i,j}^{\phantom 2}=\ell_{i,j}^{2}$). The following is our main technical lemma.
\begin{lemma}\label{lem:well defined}
There is a well defined group homomorphism \(\upphi \colon G \to H\) defined by
\[
A_{i,j} \mapsto \gen_{i,j-2}^{-1}\, \gen_{i,j-1}^{\phantom 2} \,   \gen_{i+1,j-2}^{\phantom 2} \, \gen_{i+1,j-1}^{-1},
\]
where as in Proposition~\textnormal{\ref{02 of 15 may 2022}} we adopt the convention that $\gen_{i,j}=1$ if $j<i$.
\end{lemma}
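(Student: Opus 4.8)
The plan is to extend the given assignment to a homomorphism from the free group $F$ on the symbols $A_{i,j}$, and then to check that every relator of $G$ is sent to the identity of $H$. Concretely this means running through the four families of relations of \textsection\ref{classical presentation} — the first of which splits into the subcases $r<s<i<j$ and $i<r<s<j$, so five cases — and, for each, substituting the formula $\upphi(A_{i,j})=\gen_{i,j-2}^{-1}\,\gen_{i,j-1}\,\gen_{i+1,j-2}\,\gen_{i+1,j-1}^{-1}$ into both sides and simplifying in $H$ using only the relations $R_2$, i.e.\ the far-commutativity relation, the containment-commutativity relation and the box relation of Proposition~\ref{3 of 29 january 2022}. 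The realisation $\gen_{i,j}\mapsto\ell_{i,j}^{2}$ in $\mathrm{Br}_{A_n}$ (legitimate by that same Proposition) serves only to predict which simplifications to perform. The book-keeping device that makes the cases transparent is that every factor of $\upphi(A_{i,j})$ is indexed by a connected subgraph contained in $[i,j-1]$ and containing $[i+1,j-2]$, so one reads off distances and containments between the subgraphs of $\upphi(A_{r,s})$ and those of $\upphi(A_{i,j})$ directly from the interleaving of $r,s,i,j$.

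The two commutator families cost nothing. If $r<s<i<j$ then every subgraph occurring in $\upphi(A_{r,s})$ has right endpoint $\le s-1$, while every subgraph occurring in $\upphi(A_{i,j})$ has left endpoint $\ge i\ge s+1$; hence any two of them are at distance $\ge 2$, they commute by Proposition~\ref{3 of 29 january 2022}(1), and so do $\upphi(A_{r,s})$ and $\upphi(A_{i,j})$, killing the relator. If $i<r<s<j$ then every subgraph occurring in $\upphi(A_{r,s})$ is contained in $[r,s-1]\subseteq[i+1,j-2]$, hence in each of the four subgraphs of $\upphi(A_{i,j})$, so the two images commute by Proposition~\ref{3 of 29 january 2022}(2). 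The degenerate instances, where the convention forces some $\gen$ to be $1$, are handled the same way.

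For the families (2) $r<s=i<j$, (3) $r=i<s<j$ and (4) $r<i<s<j$ one substitutes $\upphi$ into $A_{r,s}^{-1}A_{i,j}A_{r,s}=(\cdots)$ and must show the two resulting words in the $\gen$'s agree in $H$. The method is: (i) expand both sides; (ii) use the two commutator relations of Proposition~\ref{3 of 29 january 2022} to slide apart every far or nested pair, so that the only surviving non-trivial interactions sit inside configurations of the shape governed by the box relation (two subgraphs one node apart, together with a third subgraph straddling the gap and the two relevant unions); (iii) apply the box relation of Proposition~\ref{3 of 29 january 2022}(3) repeatedly, matching its five indices to the data extracted from $(r,s,i,j)$, to carry the left-hand word onto the right-hand word; (iv) clean up with a final round of commutations. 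The length of the conjugating word grows through the cases — one letter $A_{r,j}$ in (2), the two letters $A_{i,j}A_{s,j}$ in (3), the four letters $A_{r,j}A_{s,j}A_{r,j}^{-1}A_{s,j}^{-1}$ in (4) — and with it the number of box relations that must be chained. I expect case (4) to be the main obstacle: after expansion the right-hand side is a long word, and the argument succeeds only because the box relations, applied in the correct order, telescope; this is exactly where the rigidity of the presentation $R_2$ (only commutators plus one palindromic length-five family) is essential. The combinatorics here is elementary but is the bulk of the proof.

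As a final remark, the assignment $\gen_{i,j}\mapsto\ell_{i,j}^{2}$ defines a homomorphism $\uppsi\colon H\to\mathrm{Br}_{A_n}$ by Proposition~\ref{3 of 29 january 2022}, while $A_{i,j}\mapsto\upsigma_{i,j}^{2}$ defines the classical realisation of $G$ inside $\mathrm{Br}_{A_n}$; by Proposition~\ref{02 of 15 may 2022} the composite $\uppsi\circ\upphi$ agrees with this realisation on generators, so $\upphi$ automatically sends every relator of $R_1$ into $\ker\uppsi$. This confirms that the vanishing we are after is at least plausible, but it does not establish it, since $\ker\uppsi$ is not yet known to be trivial; the explicit reductions of steps (i)--(iv) are therefore genuinely needed.
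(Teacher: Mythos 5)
Your overall strategy coincides with the paper's: extend the assignment to the free group on the $A_{i,j}$ and verify that each defining relation of $G$ is sent to a consequence of $R_2$, using only far-away commutativity, inclusion commutativity and the box relations. Your treatment of the two commutator families is complete and correct, and matches the paper's cases (1) and (2): for $r<s<i<j$ every subgraph occurring in $\upphi(A_{r,s})$ ends at or before $s-1$ while every subgraph in $\upphi(A_{i,j})$ starts at or after $i\geq s+1$, giving distance at least $2$; for $i<r<s<j$ every subgraph of $\upphi(A_{r,s})$ lies in $[r,s-1]\subseteq[i+1,j-2]$ and hence inside each subgraph of $\upphi(A_{i,j})$. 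Your closing remark is also accurate: compatibility with the evaluation map $H\to\PB_{A_{n}}$ via Proposition \ref{02 of 15 may 2022} only shows the images of the relators lie in its kernel, which at this stage is not known to be trivial, so explicit reductions inside $H$ are unavoidable.

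However, for the three conjugation families ($r<s=i<j$, $r=i<s<j$, $r<i<s<j$) your proposal stops at a description of the method. You assert that after expanding, sliding factors apart by commutativity, and applying the box relation ``in the correct order'' the two words telescope onto each other, but you never exhibit a single such chain, nor specify which five-tuples of indices the box relation is applied to. This is precisely where the substance of the lemma lies: the paper's proof of these cases occupies several pages of explicit manipulation, in which both sides are multiplied or conjugated by carefully chosen elements (for instance $\gen_{r,j-2}^{\phantom 2}\,\gen_{r+1,s-1}^{-1}$ in case (3)), trivial words such as $\gen_{r,s-1}^{\phantom 2}\gen_{r,s-1}^{-1}$ are inserted to create configurations to which a box relation applies, and the palindromic relation is invoked repeatedly with specific index matches before a final round of cancellations. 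Nothing in your outline guarantees that such a sequence exists --- that the box relations suffice is exactly what must be proved, and it is not a consequence of any general principle you cite. So there is a genuine gap: the plan is the right one, but the heart of the verification (the paper's cases (3)--(5)) is asserted rather than carried out.
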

\begin{proof}
We show that \(\upphi\) is well defined by tracking the relations \(R_1\) to \(H\).
 
\medskip
\noindent
(1) \(A_{i,j}A_{r,s}= A_{r,s}A_{i,j} ~ \text{if}~ r < s < i <j\). The left hand side is sent to 
\begin{equation}\label{01 of 05 may 2022}	
(\gen_{i,j-2}^{-1}\,\gen_{i,j-1}^{\phantom 2} \,  \gen_{i+1,j-2}^{\phantom 2} \,\gen_{i+1,j-1}^{-1}) (\gen_{r,s-2}^{-1}\,\gen_{r,s-1}^{\phantom 2} \,  \gen_{r+1,s-2}^{\phantom 2} \,\gen_{r+1,s-1}^{-1})		
\end{equation}
and the right hand side is sent to 
\begin{equation}\label{02 of 05 may 2022}
(\gen_{r,s-2}^{-1}\,\gen_{r,s-1}^{\phantom 2} \,  \gen_{r+1,s-2}^{\phantom 2} \,\gen_{r+1,s-1}^{-1}) (\gen_{i,j-2}^{-1}\,\gen_{i,j-1}^{\phantom 2} \,  \gen_{i+1,j-2}^{\phantom 2} \,\gen_{i+1,j-1}^{-1}).
\end{equation}
Each term in the left bracket of  \eqref{01 of 05 may 2022} commutes  with each term  in the right bracket,  by far-away commutativity. Hence  \eqref{01 of 05 may 2022} equals \eqref{02 of 05 may 2022}.
	
\medskip
\noindent
(2)  \(A_{i,j}A_{r,s}= A_{r,s}A_{i,j} ~ \text{if}~ i < r < s <j\). The left hand side is still sent to \eqref{01 of 05 may 2022}, and the right hand side to \eqref{02 of 05 may 2022}. Now each term in the left bracket commutes  with each term in the right bracket by inclusion commutativity, so  \eqref{01 of 05 may 2022} equals \eqref{02 of 05 may 2022}.
	
\medskip
\noindent
(3) \(A^{-1}_{r,s}A_{i,j}A_{r,s}= A_{r,j}A_{i,j}A_{r,j}^{-1} ~ \text{if}~ r <  s =i <j\). The left hand side is sent to 
\[
\scriptstyle (\gen_{r+1,s-1}^{\phantom 2}\,\gen_{r+1,s-2}^{-1} \,  \gen_{r,s-1}^{-1} \,\gen_{r,s-2}^{\phantom 2}) (\gen_{s,j-2}^{-1}\,\gen_{s,j-1}^{\phantom 2} \,  \gen_{s+1,j-2}^{\phantom 2} \,\gen_{s+1,j-1}^{-1})\\ (\gen_{r,s-2}^{-1}\,\gen_{r,s-1}^{\phantom 2} \,  \gen_{r+1,s-2}^{\phantom 2} \,\gen_{r+1,s-1}^{-1}).  
\]
By inclusion commutativity \( [\gen_{r+1,s-2}^{\phantom 2}, \gen_{r,s-1}^{\phantom 2}]=1  \), and by far-away commutativity both \( \gen_{r+1,s-2}^{\phantom 2}\) and
\(\gen_{r,s-2}^{\phantom 2}\) commute with each term in the middle bracket, so the above simplifies to 
\begin{equation}\label{05 of 05 may 2022}
( \gen_{r+1,s-1}  \,\gen_{r,s-1}^{-1}  ) (\gen_{s,j-2}^{-1}\,\gen_{s,j-1} \,  \gen_{s+1,j-2} \,\gen_{s+1,j-1}^{-1})  (    \gen_{r,s-1}\, \gen_{r+1,s-1}^{-1}).
\end{equation}
In a similar way, using \( [\gen_{r+1,j-2}, \gen_{r,j-1}]=1  \)  and  \( \gen_{r,j-1}^{\phantom 2}, \gen_{r+1,j-1}^{\phantom 2}\) commute with each term in the appropriate middle bracket, the right hand side of the relation gets sent to
\begin{equation}\label{06 of 05 may 2022}
(\gen_{r,j-2}^{-1}\,   \gen_{r+1,j-2}^{\phantom 2}) (\gen_{s,j-2}^{-1}\,\gen_{s,j-1}^{\phantom 2} \,  \gen_{s+1,j-2}^{\phantom 2} \,\gen_{s+1,j-1}^{-1})  (  \gen_{r+1,j-2}^{-1}\, \gen_{r,j-2}^{\phantom 2}).  
\end{equation}
By assumption \(r <  s =i <j\), so inclusion commutativity gives \( [\gen_{r,j-2}^{\phantom 2} , \gen_{r+1,s-1}^{-1}]=1 \).  Thus left multiplying both \eqref{05 of 05 may 2022} and \eqref{06 of 05 may 2022} by \(\gen_{r,j-2}^{\phantom 2} \,\gen_{r+1,s-1}^{-1}\), and using other inclusion commutativity, it suffices to prove that
\begin{align*} 
&\phantom{=}(\gen_{r,j-2}^{\phantom 2} \,\gen_{r,s-1}^{-1}  \,\gen_{s,j-2}^{-1} \,\gen_{s+1,j-1}^{-1} \,\gen_{s,j-1}^{\phantom 2} )\,\gen_{s+1,j-2}^{\phantom 2} \,   \gen_{r,s-1}^{\phantom 2}\, \gen_{r+1,s-1}^{-1} \\
&=(\gen_{r+1,j-2}^{\phantom 2} \,\gen_{r+1,s-1}^{-1} \,\gen_{s,j-2}^{-1} \,\gen_{s+1,j-1}^{-1} \,\gen_{s,j-1}^{\phantom 2} ) \,  \gen_{s+1,j-2}^{\phantom 2}\,  \gen_{r+1,j-2}^{-1}\, \gen_{r,j-2}^{\phantom 2}. 
\end{align*}
By applying the box relations to each, this is equivalent to asking that
\begin{align*}
&\phantom{=} ( \gen_{s,j-1}^{\phantom 2} \,\gen_{s+1,j-1}^{-1} \,\gen_{s,j-2}^{-1} \, \gen_{r,s-1}^{-1} \,\gen_{r,j-2}^{\phantom 2}    )\, \gen_{s+1,j-2}^{\phantom 2} \,   \gen_{r,s-1}^{\phantom 2}\, \gen_{r+1,s-1}^{-1}\\
&=(\gen_{s,j-1}^{\phantom 2}  \,   \gen_{s+1,j-1}^{-1} \, \gen_{s,j-2}^{-1} \,\gen_{r+1,s-1}^{-1} \,\gen_{r+1,j-2}^{\phantom 2}   )\,   \gen_{s+1,j-2}^{\phantom 2}\,  \gen_{r+1,j-2}^{-1}\, \gen_{r,j-2}^{\phantom 2}.
\end{align*}
Using \(r <  s =i <j\) and commutativity, these expressions are indeed equal.
	
\medskip
\noindent
(4) \(A^{-1}_{r,s}A_{i,j}A_{r,s}=(A_{i,j}A_{s,j}) A_{i,j} (A_{i,j}A_{s,j})^{-1} ~ \text{if}~ r=i < s <j\). The LHS is sent to 
\begin{equation}\label{11 of 05 may 2022}
{\scriptstyle (\gen_{r+1,s-1}^{\phantom 2} \gen_{r+1,s-2}^{-1} \,  \gen_{r,s-1}^{-1} \,\gen_{r,s-2}^{\phantom 2}) (\gen_{r,j-2}^{-1}\,\gen_{r,j-1}^{\phantom 2} \,  \gen_{r+1,j-2}^{\phantom 2} \,\gen_{r+1,j-1}^{-1}) (\gen_{r,s-2}^{-1}\,\gen_{r,s-1}^{\phantom 2} \,  \gen_{r+1,s-2}^{\phantom 2} \,\gen_{r+1,s-1}^{-1})   }
\end{equation}
while the right hand side is sent to 
\begin{equation}\label{12 of 05 may 2022} 
\begin{aligned}
(\gen_{r,j-2}^{-1}\,\gen_{r,j-1}^{\phantom 2} \,  \gen_{r+1,j-2}^{\phantom 2} \,\gen_{r+1,j-1}^{-1}) (\gen_{s,j-2}^{-1}\,\gen_{s,j-1}^{\phantom 2} \,  \gen_{s+1,j-2}^{\phantom 2} \,\gen_{s+1,j-1}^{-1}) \,(\gen_{r,j-2}^{-1}\,\gen_{r,j-1}^{\phantom 2} \,   \gen_{r+1,j-2}^{\phantom 2} \, &\\ \gen_{r+1,j-1}^{-1})   (\gen_{s+1,j-1}^{\phantom 2}\,\gen_{s+1,j-2}^{-1} \,  \gen_{s,j-1}^{-1} \,\gen_{s,j-2}^{\phantom 2}) (\gen_{r+1,j-1}^{\phantom 2}\,\gen_{r+1,j-2}^{-1} \,  \gen_{r,j-1}^{-1} \,\gen_{r,j-2}^{\phantom 2}).
\end{aligned}
\end{equation}
Since \( r=i < s <j\), we have that \(\gen_{r,j-1}^{\phantom 2}\)  commutes all through and so \(\gen_{r,j-1}^{\phantom 2}\) and \(\gen_{r,j-1}^{-1}\)  in \eqref{12 of 05 may 2022} cancel. Further, left multiply both \eqref{11 of 05 may 2022} and \eqref{12 of 05 may 2022} by \(\gen_{r,j-1}^{-1}\), we can remove all \(\gen_{r,j-1}^{\phantom 2}\) from \eqref{11 of 05 may 2022} and \eqref{12 of 05 may 2022}.
	
\medskip
\noindent
By inclusion commutativity \( \gen_{r+1,s-2}^{\phantom 2}\) commutes with  \( \gen_{r,s-1}^{\phantom 2} , \gen_{r,s-2}^{-1} \) and each term in the middle bracket of \eqref{11 of 05 may 2022} and so \( \gen_{r+1,s-2}^{\phantom 2}\) and \( \gen_{r+1,s-2}^{-1}\) cancel in \eqref{11 of 05 may 2022} . Moreover \(\gen_{r,j-2}^{\phantom 2}\) commutes with each term in the left bracket of \eqref{11 of 05 may 2022}, so \(\gen_{r,j-2}^{-1}\) can be brought to the front of \eqref{11 of 05 may 2022}. Similarly  \(\gen_{r+1,j-1}^{\phantom 2}\) commutes with each term in the second last bracket of \eqref{12 of 05 may 2022} so cancels with  the \(\gen_{r+1,j-1}^{-1}\). Similarly \(\gen_{s+1,j-2}^{-1}\) and \(\gen_{s+1,j-2}^{\phantom 2}\)  cancel in \eqref{12 of 05 may 2022}.  Thus  left multiplying   \eqref{11 of 05 may 2022} and  \eqref{12 of 05 may 2022}  by \(\gen_{r,j-2}^{\phantom 2}\), it suffices to show that 
\begin{align*}\label{13 of 05 may 2022}  
&\phantom{=}( \gen_{r+1,s-1}^{\phantom 2}\,\gen_{r,s-1}^{-1}  \,\gen_{r,s-2}^{\phantom 2}  ) (  \gen_{r+1,j-2}^{\phantom 2} \,\gen_{r+1,j-1}^{-1}) \,(\gen_{r,s-2}^{-1}\,\gen_{r,s-1}^{\phantom 2} \,\gen_{r+1,s-1}^{-1}) \\
&= {\scriptstyle(  \gen_{r+1,j-2}^{\phantom 2} \,\gen_{r+1,j-1}^{-1}) (\gen_{s,j-2}^{-1}\,\gen_{s,j-1}^{\phantom 2} \,  \gen_{s+1,j-1}^{-1} ) \,(\gen_{r,j-2}^{-1}\,  \gen_{r+1,j-2}^{\phantom 2} )   (\gen_{s+1,j-1}^{\phantom 2}\,   \gen_{s,j-1}^{-1} \, \gen_{s,j-2}^{\phantom 2}) ( \gen_{r+1,j-2}^{-1}  \,\gen_{r,j-2}^{\phantom 2}).}	
\end{align*}
By inclusion commutativity, \(\gen_{s,j-2}^{-1}\) commutes with \(\gen_{r+1,j-2}^{\phantom 2}\), \(\gen_{r+1,j-1}^{-1}\), \(\gen_{r+1,j-2}^{-1} , \gen_{r,j-2}^{\phantom 2}\) and further \([\gen_{r,s-1}^{\phantom 2},\gen_{r,s-2}^{-1}]=1\).  By conjugating the above by \(\gen_{r+1,j-2}^{-1}\,\gen_{s,j-2}^{\phantom 2}\), the claim becomes that
\begin{align*}
&( \gen_{r+1,j-2}^{-1}\,\gen_{s,j-2}^{\phantom 2}\,\gen_{r+1,s-1}^{\phantom 2}  \,\gen_{r,s-2}^{\phantom 2} \,\gen_{r,s-1}^{-1})  \gen_{r+1,j-2}^{\phantom 2} \,\gen_{r+1,j-1}^{-1}  (\gen_{r,s-1}^{\phantom 2} \,   \gen_{r,s-2}^{-1}\,\gen_{r+1,s-1}^{-1}\,  \gen_{s,j-2}^{-1} \,\gen_{r+1,j-2}^{\phantom 2})\\
&=\gen_{r+1,j-1}^{-1} \,\gen_{s,j-1}^{\phantom 2} \,  \gen_{s+1,j-1}^{-1} \,\gen_{r,j-2}^{-1}\,  \gen_{r+1,j-2}^{\phantom 2} \, \gen_{s+1,j-1}^{\phantom 2} \, \gen_{s,j-1}^{-1} \,\gen_{r,j-2}^{\phantom 2}
\end{align*}	
By use of the box relations, the top line becomes 
\[
( \gen_{r,s-1}^{-1} \,\gen_{r,s-2}^{\phantom 2} \,\gen_{r+1,s-1}^{\phantom 2}\,\gen_{s,j-2}^{\phantom 2}\, \gen_{r+1,j-2}^{-1}   )  \gen_{r+1,j-2}^{\phantom 2} \,\gen_{r+1,j-1}^{-1} (\gen_{r+1,j-2}^{\phantom 2} \,  \gen_{s,j-2}^{-1} \, \gen_{r+1,s-1}^{-1} \, \gen_{r,s-2}^{-1}\,\gen_{r,s-1}^{\phantom 2})
\]
which by inclusion commutativity  and obvious cancellations simplifies to
\[
\gen_{r,s-1}^{-1} \,\gen_{r,s-2}^{\phantom 2}  \,  \gen_{r+1,j-1}^{-1} \,\gen_{r+1,j-2}^{\phantom 2}  \, \gen_{r,s-2}^{-1}\,\gen_{r,s-1}^{\phantom 2}.      
\]
By right multiplying  by \( \gen_{r,s-2}^{\phantom 2}\) and left multiplying by \( \gen_{r,s-1}^{\phantom 2}\), the claim becomes that
\begin{align*}
&\phantom{=} \gen_{r,s-2}^{\phantom 2}   \,\gen_{r+1,j-1}^{-1} \, \gen_{r+1,j-2}^{\phantom 2}  \,  \gen_{r,s-1}^{\phantom 2} \\
&=\gen_{r,s-1}^{\phantom 2}\,\gen_{s+1,j-1}^{-1}( \gen_{r+1,j-1}^{-1} \, \gen_{s,j-1}^{\phantom 2} \,  \gen_{r+1,j-2}^{\phantom 2}\, \gen_{r,s-2}^{\phantom 2} \,\gen_{r,j-2}^{-1})\gen_{s+1,j-1}^{\phantom 2} \, \gen_{s,j-1}^{-1}\, \gen_{r,j-2}^{\phantom 2}.
\end{align*}	
By the box relations, the bottom line equals 
\[ 
\gen_{r,s-1}^{\phantom 2}\,\gen_{s+1,j-1}^{-1}(\gen_{r,j-2}^{-1} \, \gen_{r,s-2}^{\phantom 2} \,\gen_{r+1,j-2}^{\phantom 2} \, \gen_{s,j-1}^{\phantom 2} \,\gen_{r+1,j-1}^{-1}  )\gen_{s+1,j-1}^{\phantom 2} \, \gen_{s,j-1}^{-1} \, \gen_{r,j-2}^{\phantom 2}
\]
which simplifies to
\[
\gen_{r,s-2}^{\phantom2}\,\gen_{s+1,j-1}^{-1}(\gen_{r,j-2}^{-1}\,\gen_{r,s-1}^{\phantom 2} \,\gen_{r+1,j-2}^{\phantom 2} \,\gen_{s+1,j-1}^{\phantom 2} \, \gen_{r+1,j-1}^{-1}    )    \gen_{r,j-2}^{\phantom 2}.
\] 
One final application of the box relations, and commutativity, proves the claim.

\medskip
\noindent	
(5)  \(A^{-1}_{r,s}A_{i,j}A_{r,s}=(A_{r,j}A_{s,j}A_{r,j}^{-1}A_{s,j}^{-1}) A_{i,j} (A_{r,j}A_{s,j}A_{r,j}^{-1}A_{s,j}^{-1})^{-1} ~ \text{if}~ r<i < s <j\).
	
The factor  \((A_{r,j}A_{s,j}A_{r,j}^{-1}A_{s,j}^{-1})\)  gets sent  to 
\begin{align*}
(\gen_{r,j-2}^{-1}\,\gen_{r,j-1}^{\phantom 2} \,  \gen_{r+1,j-2}^{\phantom 2} \,\gen_{r+1,j-1}^{-1}) (\gen_{s,j-2}^{-1}\,\gen_{s,j-1}^{\phantom 2} \,  \gen_{s+1,j-2}^{\phantom 2} \,\gen_{s+1,j-1}^{-1}) (\gen_{r+1,j-1}^{\phantom 2}\,&\\ \gen_{r+1,j-2}^{-1} \,  \gen_{r,j-1}^{-1} \,\gen_{r,j-2}^{\phantom 2})   (\gen_{s+1,j-1}^{\phantom 2}\,\gen_{s+1,j-2}^{-1} \,  \gen_{s,j-1}^{-1} \,\gen_{s,j-2}^{\phantom 2}),
\end{align*} 
which by  inclusion commutativity and cancellation equals 
\begin{equation}\label{24 of 05 may 2022}
(\gen_{r,j-2}^{-1}\,  \gen_{r+1,j-2}^{\phantom 2} ) (\gen_{s,j-2}^{-1}\,\gen_{s,j-1}^{\phantom 2}  \,\gen_{s+1,j-1}^{-1}) ( \gen_{r+1,j-2}^{-1}  \,\gen_{r,j-2}^{\phantom 2})   (\gen_{s+1,j-1}^{\phantom 2}  \, \gen_{s,j-1}^{-1} \,\gen_{s,j-2}^{\phantom 2}).
\end{equation}
Now the left hand side of the relation is sent to 
\[
{\scriptstyle (\gen_{r+1,s-1}^{\phantom 2}\,\gen_{r+1,s-2}^{-1} \,  \gen_{r,s-1}^{-1} \,\gen_{r,s-2}^{\phantom 2}) (\gen_{i,j-2}^{-1}\,\gen_{i,j-1}^{\phantom 2} \,  \gen_{i+1,j-2}^{\phantom 2} \,\gen_{i+1,j-1}^{-1})  (\gen_{r,s-2}^{-1}\,\gen_{r,s-1}^{\phantom 2} \,  \gen_{r+1,s-2}^{\phantom 2} \,\gen_{r+1,s-1}^{-1})} 
\]
whilst using \eqref{24 of 05 may 2022} the right hand side of the relation is sent to
\[
\begin{aligned}
(\gen_{r,j-2}^{-1}\,  \gen_{r+1,j-2}^{\phantom 2} ) (\gen_{s,j-2}^{-1}\,\gen_{s,j-1}^{\phantom 2}  \,\gen_{s+1,j-1}^{-1}) ( \gen_{r+1,j-2}^{-1}  \,\gen_{r,j-2}^{\phantom 2})   (\gen_{s+1,j-1}^{\phantom 2}  \,   \gen_{s,j-1}^{-1} \,\gen_{s,j-2}^{\phantom 2})  &\\(\gen_{i,j-2}^{-1}\,\gen_{i,j-1}^{\phantom 2} \,  \gen_{i+1,j-2}^{\phantom 2} \,\gen_{i+1,j-1}^{-1})( \gen_{s,j-2}^{-1} \,   \gen_{s,j-1}^{\phantom 2} \,\gen_{s+1,j-1}^{-1} )(  \gen_{r,j-2}^{-1} \,\gen_{r+1,j-2}^{\phantom 2}) &\\(\gen_{s+1,j-1}^{\phantom 2} \,\gen_{s,j-1}^{-1}  \,\gen_{s,j-2}^{\phantom 2} ) ( \gen_{r+1,j-2}^{-1} \,\gen_{r,j-2}^{\phantom 2} ).
\end{aligned}
\]
Multiplying both to the left and right of the last two equations by \(\gen_{r+1,s-1}^{-1}\) and  using  inclusion commutativity to commute \(\gen_{s,j-2}^{\phantom 2} \) through the middle bracket of the right hand side of the relation  , it suffices to prove that
\begin{align}\label{32 of 05 may 2022}
& \gen_{r+1,s-2}^{-1} \,\gen_{r,s-1}^{-1} \,\gen_{r,s-2}^{\phantom 2} \, \gen_{i,j-2}^{-1}\,\gen_{i,j-1}^{\phantom 2} \,  \gen_{i+1,j-2}^{\phantom 2} \,\gen_{i+1,j-1}^{-1} \,\gen_{r,s-2}^{-1}\,\gen_{r,s-1}^{\phantom 2} \,  \gen_{r+1,s-2}^{\phantom 2}\nonumber\\
&=\gen_{r+1,s-2}^{-1}\,\gen_{r,s-1}^{-1} \, \gen_{r,s-2}^{\phantom 2} ( \gen_{i,j-1}^{\phantom 2} \, \gen_{i,j-2}^{-1} \,   \gen_{i+1,j-2}^{\phantom 2} \, \gen_{i+1,j-1}^{-1})  \gen_{r,s-2}^{-1} \,\gen_{r,s-1}^{\phantom 2} \, \gen_{r+1,s-2}^{\phantom 2} 
\end{align}
equals 
\[
\begin{aligned}
\gen_{r,j-2}^{-1} (\gen_{r+1,j-2}^{\phantom 2} \,\gen_{r+1,s-1}^{-1}  \,    \gen_{s,j-2}^{-1} \,\gen_{s+1,j-1}^{-1} \,\gen_{s,j-1}^{\phantom 2}  ) \gen_{r+1,j-2}^{-1}  \,\gen_{r,j-2}^{\phantom 2}\,\gen_{s+1,j-1}^{\phantom 2}  \,   \gen_{s,j-1}^{-1} \, \gen_{i,j-2}^{-1}\,&\\ \gen_{i,j-1}^{\phantom 2} \,  \gen_{i+1,j-2}^{\phantom 2} \,\gen_{i+1,j-1}^{-1}\,\gen_{s,j-1}^{\phantom 2} \,\gen_{s+1,j-1}^{-1} \, \gen_{r,j-2}^{-1} \,\gen_{r+1,j-2}^{\phantom 2}  ( \gen_{s,j-1}^{-1}  \,\gen_{s+1,j-1}^{\phantom 2} \,&\\ \gen_{s,j-2}^{\phantom 2} \,\gen_{r+1,s-1}^{\phantom 2}  \gen_{r+1,j-2}^{-1} ) \gen_{r,j-2}^{\phantom 2} .
\end{aligned}
\]
By the  box relations, the last equation equals
\begin{align*}
\gen_{r,j-2}^{-1} (\gen_{s,j-1}^{\phantom 2}\,\gen_{s+1,j-1}^{-1} \, \gen_{s,j-2}^{-1} \,\gen_{r+1,s-1}^{-1}  \, \gen_{r+1,j-2}^{\phantom 2}) \gen_{r+1,j-2}^{-1}  \,\gen_{r,j-2}^{\phantom 2} \,\gen_{s+1,j-1}^{\phantom 2}  \,   \gen_{s,j-1}^{-1} \,\gen_{i,j-2}^{-1}\, &\\ \gen_{i,j-1}^{\phantom 2} \,  \gen_{i+1,j-2}^{\phantom 2} \,\gen_{i+1,j-1}^{-1} \, \gen_{s,j-1}^{\phantom 2} \,\gen_{s+1,j-1}^{-1} \,  \gen_{r,j-2}^{-1} \,\gen_{r+1,j-2}^{\phantom 2} (\gen_{r+1,j-2}^{-1} \,\gen_{r+1,s-1}^{\phantom 2} \, &\\ \gen_{s,j-2}^{\phantom 2} \,\gen_{s+1,j-1}^{\phantom 2} \, \gen_{s,j-1}^{-1}  ) \gen_{r,j-2}^{\phantom 2} ,
\end{align*}
which by obvious cancellations  simplifies to
\[
\begin{aligned}
\gen_{r,j-2}^{-1} \,\gen_{s,j-1}^{\phantom 2}\,\gen_{s+1,j-1}^{-1} \, \gen_{s,j-2}^{-1} \,\gen_{r+1,s-1}^{-1} \, \gen_{r,j-2}^{\phantom 2} \, \gen_{s+1,j-1}^{\phantom 2}  \,   \gen_{s,j-1}^{-1} \,\gen_{i,j-2}^{-1}\,\gen_{i,j-1}^{\phantom 2} \,   \gen_{i+1,j-2}^{\phantom 2} \,&\\  \gen_{i+1,j-1}^{-1} \, \gen_{s,j-1}^{\phantom 2} \,\gen_{s+1,j-1}^{-1} \,  \gen_{r,j-2}^{-1} \, \gen_{r+1,s-1}^{\phantom 2} \, \gen_{s,j-2}^{\phantom 2} \,\gen_{s+1,j-1}^{\phantom 2} \, \gen_{s,j-1}^{-1}  \, \gen_{r,j-2}^{\phantom 2}.
\end{aligned}
\]
Inserting  \(\gen_{r,s-1}^{\phantom 2}\,\gen_{r,s-1}^{-1}=1\)  twice  and using \([\gen_{r,s-1}^{\phantom 2}, \gen_{r,j-2}^{\phantom 2}] =1\), this equals
\[
\begin{split}
\gen_{r,j-2}^{-1} (\gen_{s,j-1}^{\phantom 2}\,\gen_{s+1,j-1}^{-1} \, \gen_{s,j-2}^{-1} \,\gen_{r,s-1}^{-1} \, \gen_{r,j-2}^{\phantom 2}) \gen_{r,s-1}^{\phantom 2} \,\gen_{s+1,j-1}^{\phantom 2}  \, \gen_{r+1,s-1}^{-1} \,  \gen_{s,j-1}^{-1} \,  \gen_{i,j-2}^{-1}\, &\\ \gen_{i,j-1}^{\phantom 2} \,   \gen_{i+1,j-2}^{\phantom 2}  \, \gen_{i+1,j-1}^{-1} \, \gen_{s,j-1}^{\phantom 2} \, \gen_{r+1,s-1}^{\phantom 2} \,  \gen_{s+1,j-1}^{-1} \,   \gen_{r,s-1}^{-1}( \gen_{r,j-2}^{-1} \gen_{r,s-1}^{\phantom 2}&\\ \gen_{s,j-2}^{\phantom 2} \,\gen_{s+1,j-1}^{\phantom 2} \, \gen_{s,j-1}^{-1}  ) \gen_{r,j-2}^{\phantom 2} ,
\end{split}
\]
which again by the box relations becomes 
\[
\begin{aligned}
\gen_{r,j-2}^{-1} (\gen_{r,j-2}^{\phantom 2} \gen_{r,s-1}^{-1} \,\gen_{s,j-2}^{-1} \,\gen_{s+1,j-1}^{-1} \cdot\gen_{s,j-1}^{\phantom 2}    )\gen_{r,s-1}^{\phantom 2}  \,\gen_{s+1,j-1}^{\phantom 2}  \,  \gen_{r+1,s-1}^{-1} \,\gen_{s,j-1}^{-1}\,  \gen_{i,j-2}^{-1}\,  &\\ \gen_{i,j-1}^{\phantom 2} \,   \gen_{i+1,j-2}^{\phantom 2}  \,  \gen_{i+1,j-1}^{-1} \,\gen_{s,j-1}^{\phantom 2}  \, \gen_{r+1,s-1}^{\phantom 2}\,\gen_{s+1,j-1}^{-1} \, \gen_{r,s-1}^{-1}(\gen_{s,j-1}^{-1}\,\gen_{s+1,j-1}^{\phantom 2} \, &\\ \gen_{s,j-2}^{\phantom 2}  \,\gen_{r,s-1}^{\phantom 2} \,\gen_{r,j-2}^{-1}  )  \gen_{r,j-2}^{\phantom 2}.
\end{aligned}
\]
By commutativity, this simplifies to
\begin{equation}\label{33 of 05 may 2022}
\begin{split}
\gen_{r,s-1}^{-1} \,   \gen_{s,j-2}^{-1} \,   \gen_{s,j-1}^{\phantom 2} \gen_{r,s-1}^{\phantom 2} \,  \gen_{r+1,s-1}^{-1} \, \gen_{s,j-1}^{-1}\, \gen_{i,j-2}^{-1}\, \gen_{i,j-1}^{\phantom 2} \,   \gen_{i+1,j-2}^{\phantom 2}  \, \gen_{i+1,j-1}^{-1} \,  \gen_{s,j-1}^{\phantom 2} \, \,\gen_{r+1,s-1}^{\phantom 2} \,&\\  \gen_{r,s-1}^{-1} \, \gen_{s,j-1}^{-1}  \, \gen_{s,j-2}^{\phantom 2}  \,   \gen_{r,s-1}^{\phantom 2}.
\end{split}
\end{equation}
Now, conjugate   both \eqref{32 of 05 may 2022}  and \eqref{33 of 05 may 2022}  by \(\gen_{r,s-1}^{\phantom 2}\),  and use that \([\gen_{r+1,s-2}^{\phantom 2}, \gen_{r,s-1}^{\phantom 2} ]=1 \). Further, left   multiply  both \eqref{32 of 05 may 2022}  and \eqref{33 of 05 may 2022}  by \(\gen_{s,j-2}^{\phantom 2}\),  and   use the fact that \(\gen_{s,j-2}^{\phantom 2}\) commutes all through \eqref{32 of 05 may 2022}.  Then, right multiply both \eqref{32 of 05 may 2022}  and \eqref{33 of 05 may 2022}  by  \(\gen_{s,j-2}^{-1}\). Furthermore, conjugate  both \eqref{32 of 05 may 2022}  and \eqref{33 of 05 may 2022}  by \(\gen_{s,j-1}^{-1}\) and using commutativity,   it suffices to show that 
\begin{align*} 
&\gen_{r+1,s-2}^{-1} \, \gen_{r,s-2}^{\phantom 2} ( \gen_{i,j-1}^{\phantom 2} \, \gen_{s,j-1}^{-1} \, \gen_{i,j-2}^{-1} \,   \gen_{i+1,j-2}^{\phantom 2} \,\gen_{s,j-1}^{\phantom 2} \,\gen_{i+1,j-1}^{-1})  \gen_{r,s-2}^{-1} \, \gen_{r+1,s-2}^{\phantom 2}
\end{align*}
equals 
\begin{align}\label{01 of 07 may 2022}
\gen_{r,s-1}^{\phantom 2} \,  \gen_{r+1,s-1}^{-1} \, \gen_{s,j-1}^{-1}\, \gen_{i,j-2}^{-1}\, \gen_{i,j-1}^{\phantom 2} \,   \gen_{i+1,j-2}^{\phantom 2}  \, \gen_{i+1,j-1}^{-1} \,  \gen_{s,j-1}^{\phantom 2} \, \,\gen_{r+1,s-1}^{\phantom 2} \, \gen_{r,s-1}^{-1}.  
\end{align} 
Inserting \(\gen_{r,s-2}^{-1} \,   \gen_{r,j-2}^{\phantom 2}\gen_{r,j-2}^{-1} \, \gen_{r,s-2}^{\phantom 2}=1  \),  the top line of the claim becomes
\begin{align*} 
{\scriptstyle\gen_{r+1,s-2}^{-1} \, \gen_{r,s-2}^{\phantom 2} ( \gen_{i,j-1}^{\phantom 2} \, \gen_{s,j-1}^{-1} \, \gen_{i,j-2}^{-1} \,  \gen_{r,s-2}^{-1} \,   \gen_{r,j-2}^{\phantom 2} ) (\gen_{r,j-2}^{-1} \, \gen_{r,s-2}^{\phantom 2} \,  \gen_{i+1,j-2}^{\phantom 2} \,\gen_{s,j-1}^{\phantom 2} \,\gen_{i+1,j-1}^{-1})  \gen_{r,s-2}^{-1} \, \gen_{r+1,s-2}^{\phantom 2}}
\end{align*}
which by the box relations equals
\begin{equation*}
{\scriptstyle\gen_{r+1,s-2}^{-1} \gen_{r,s-2}^{\phantom 2} ( \gen_{r,j-2}^{\phantom 2}  \,  \gen_{r,s-2}^{-1} \,\gen_{i,j-2}^{-1}\, \gen_{s,j-1}^{-1} \,   \gen_{i,j-1}^{\phantom 2} ) (\gen_{i+1,j-1}^{-1}\, \gen_{s,j-1}^{\phantom 2} \,\gen_{i+1,j-2}^{\phantom 2} \,  \gen_{r,s-2}^{\phantom 2} \,\gen_{r,j-2}^{-1})  \gen_{r,s-2}^{-1}  \gen_{r+1,s-2}^{\phantom 2},}
\end{equation*}
which by inclusion commutativity  and cancellations  simplifies to
\begin{equation}\label{34 of 05 may 2022}
\begin{aligned} \gen_{r+1,s-2}^{-1} \, \gen_{r,j-2}^{\phantom 2}  \,\gen_{i,j-2}^{-1}\,    \gen_{i,j-1}^{\phantom 2} \,\gen_{i+1,j-1}^{-1} \,\gen_{i+1,j-2}^{\phantom 2} \,\gen_{r,j-2}^{-1} \, \gen_{r+1,s-2}^{\phantom 2}.
\end {aligned}
\end{equation}
Inserting  \(\gen_{s+1,j-1}^{\phantom 2}\,\gen_{s+1,j-1}^{-1} =1 \) in \eqref{34 of 05 may 2022}, using commutativity  and conjugating \eqref{34 of 05 may 2022}  and \eqref{01 of 07 may 2022} by \(\gen_{r,s-1}^{-1}\), the claim becomes
\begin{align*}
&\gen_{r+1,s-2}^{-1} (\gen_{r,j-2}^{\phantom 2}  \,\gen_{r,s-1}^{-1}   \,\gen_{i,j-2}^{-1}\,\gen_{s+1,j-1}^{-1} \,   \gen_{i,j-1}^{\phantom 2} ) (\gen_{i+1,j-1}^{-1} \,\gen_{s+1,j-1}^{\phantom 2} \, \gen_{i+1,j-2}^{\phantom 2} \, \gen_{r,s-1}^{\phantom 2} \,  \gen_{r,j-2}^{-1})   \gen_{r+1,s-2}^{\phantom 2}  \\
&=\gen_{r+1,s-1}^{-1} \,  \gen_{s,j-1}^{-1}  \gen_{i,j-2}^{-1}\, \gen_{i,j-1}^{\phantom 2} \,   \gen_{i+1,j-2}^{\phantom 2}  \, \gen_{i+1,j-1}^{-1}  \gen_{s,j-1}^{\phantom 2}  \,  \gen_{r+1,s-1}^{\phantom 2}.
\end{align*}
Using  box relations the top line of the claim equals
\[
\gen_{r+1,s-2}^{-1} (\gen_{i,j-1}^{\phantom 2}  \,\gen_{s+1,j-1}^{-1}   \,\gen_{i,j-2}^{-1}\,\gen_{r,s-1}^{-1} \,   \gen_{r,j-2}^{\phantom 2} ) (\gen_{r,j-2}^{-1} \,\gen_{r,s-1}^{\phantom 2} \, \gen_{i+1,j-2}^{\phantom 2} \,\gen_{s+1,j-1}^{\phantom 2} \, \gen_{i+1,j-1}^{-1})   \gen_{r+1,s-2}^{\phantom 2}, 
\]
which simplifies to
\[
\gen_{r+1,s-2}^{-1} (\gen_{i,j-1}^{\phantom 2}  \,\gen_{s+1,j-1}^{-1}   \,\gen_{i,j-2}^{-1}  \, \gen_{i+1,j-2}^{\phantom 2} \,\gen_{s+1,j-1}^{\phantom 2} \, \gen_{i+1,j-1}^{-1})  \gen_{r+1,s-2}^{\phantom 2},
\]
which after adding  \( \gen_{r+1,s-1}^{-1} \,\gen_{r+1,j-2}^{\phantom 2} \, \gen_{r+1,j-2}^{-1}  \,  \gen_{r+1,s-1}^{\phantom 2} =1\) becomes
\[
{\scriptstyle \gen_{r+1,s-2}^{-1} (\gen_{i,j-1}^{\phantom 2}\, \gen_{s+1,j-1}^{-1} \,\gen_{i,j-2}^{-1} \,  \gen_{r+1,s-1}^{-1} \,\gen_{r+1,j-2}^{\phantom 2} ) (\gen_{r+1,j-2}^{-1}  \,  \gen_{r+1,s-1}^{\phantom 2} \,\gen_{i+1,j-2}^{\phantom 2}\, \gen_{s+1,j-1}^{\phantom 2} \,   \gen_{i+1,j-1}^{-1} )  \gen_{r+1,s-2}^{\phantom 2}.}
\]
Thus by  the  box relations this equals
\[
{\scriptstyle\gen_{r+1,s-2}^{-1} (\gen_{r+1,j-2}^{\phantom 2}\, \gen_{r+1,s-1}^{-1} \,\gen_{i,j-2}^{-1} \,  \gen_{s+1,j-1}^{-1} \,\gen_{i,j-1}^{\phantom 2} ) (\gen_{i+1,j-1}^{-1}  \,  \gen_{s+1,j-1}^{\phantom 2} \,\gen_{i+1,j-2}^{\phantom 2}\, \gen_{r+1,s-1}^{\phantom 2} \,   \gen_{r+1,j-2}^{-1} ) \gen_{r+1,s-2}^{\phantom 2}},
\]
which by   inclusion commutativity  becomes 
\begin{align}\label{02 of 07 may 2022}
\gen_{r+1,s-2}^{-1} \gen_{r+1,j-2}^{\phantom 2}\, \gen_{r+1,s-1}^{-1}\,\gen_{i,j-2}^{-1} \,\gen_{i,j-1}^{\phantom 2} \,\gen_{i+1,j-1}^{-1}    \,\gen_{i+1,j-2}^{\phantom 2}\,\gen_{r+1,s-1}^{\phantom 2}\,  \gen_{r+1,j-2}^{-1}  \gen_{r+1,s-2}^{\phantom 2}. 
\end{align}
Now conjugate   the bottom line of the claim  and \eqref{02 of 07 may 2022} by \(\gen_{r+1,s-1}^{\phantom 2}\)  and use commutativity. We further insert  \(\gen_{s,j-1}^{\phantom 2}\, \gen_{s,j-1}^{- 2} =1\) in   \eqref{02 of 07 may 2022} and use commutativity so that the claim becomes 
\begin{align*}
&(\gen_{r+1,j-2}^{\phantom 2}\,\gen_{r+1,s-2}^{-1} \,   \gen_{i,j-2}^{-1} \, \gen_{s,j-1}^{-1} \, \gen_{i,j-1}^{\phantom 2} \,\gen_{i+1,j-1}^{-1})  \,(\gen_{s,j-1}^{\phantom 2} \,    \gen_{i+1,j-2}^{\phantom 2} \,  \gen_{r+1,s-2}^{\phantom 2} \,   \gen_{r+1,j-2}^{-1}) \\
&=\gen_{s,j-1}^{-1}  \gen_{i,j-2}^{-1}\, \gen_{i,j-1}^{\phantom 2} \,   \gen_{i+1,j-2}^{\phantom 2}  \, \gen_{i+1,j-1}^{-1}  \gen_{s,j-1}^{\phantom 2}. 
\end{align*}
By  use of the box relations on the the top line of the claim, commutativity, and  obvious cancellations, the claim  holds. 
\end{proof}

As a slight abuse of notation, set $\scrA\colonequals \ell_{\scrA}^2$. The following is our main result.
\begin{theorem}\label{Feb 2022}
The pure braid group $\PB_{A_{n}}$  has a presentation with generators given by connected subgraphs $\scrA \subseteq  A_n=\Afour{B}{B}{B}{B}$, subject to the relations
\begin{enumerate}
\item  \( \scrA\cdot\scrB = \scrB\cdot \scrA\) if \(d(\scrA, \scrB)\geq 2\), or \(\scrA \subseteq \scrB\), or \(\scrB\subseteq\scrA\).
\item For all $\scrA$ and all $\scrC$ such that $d(\scrA,\scrC)=2$, then
\[(\scrA\cup\scrB)^{-1}\cdot(\scrA\cdot\scrB\cdot\scrC)\cdot(\scrB\cup\scrC)^{-1}=
(\scrC\cup\scrB)^{-1}\cdot(\scrC\cdot\scrB\cdot\scrA)\cdot(\scrB\cup\scrA)^{-1}
\]
for all   $\scrB$  compatible with \((\scrA, \scrC) \).
\end{enumerate}		
\end{theorem}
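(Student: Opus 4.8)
The plan is to show that the homomorphism $\upphi\colon G\to H$ of Lemma~\ref{lem:well defined} is an isomorphism, where $G=\langle A_{i,j}\mid R_1\rangle\cong\PB_{A_n}$ is the classical presentation of \cite{A,FV} and $H=\langle\gen_{i,j}\mid R_2\rangle$ is the new presentation, whose relations are exactly (1) and (2) of the statement under the dictionary $\scrA=[i,j]$ (so that (1) combines far-away and inclusion commutativity, and (2) is the box relation of Proposition~\ref{3 of 29 january 2022}(3) with $\scrA=[i,j],\ \scrB=[a,k],\ \scrC=[j+2,p]$, using $\scrA\cup\scrB=[i,k]$ and $\scrB\cup\scrC=[a,p]$). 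Alongside $\upphi$ I would introduce the evaluation homomorphism $\uppsi\colon H\to\PB_{A_n}$, $\gen_{i,j}\mapsto\ell_{i,j}^{2}$: this is well defined because each $\ell_{\scrA}^{2}$ lies in $\PB_{A_n}$, and under the substitution $\gen_{i,j}=\ell_{i,j}^{2}$ the defining relations $R_2$ of $H$ become precisely the identities established in Proposition~\ref{3 of 29 january 2022}. Henceforth identify $\PB_{A_n}$ with $G$.

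First I would check that $\uppsi\circ\upphi=\mathrm{id}_G$. Since $G$ is generated by the $A_{i,j}$ it suffices to evaluate on a generator: $\upphi$ sends $A_{i,j}$ to $\gen_{i,j-2}^{-1}\gen_{i,j-1}\gen_{i+1,j-2}\gen_{i+1,j-1}^{-1}$, and $\uppsi$ then sends this to $\ell_{i,j-2}^{-2}\ell_{i,j-1}^{2}\ell_{i+1,j-2}^{2}\ell_{i+1,j-1}^{-2}$, which equals $A_{i,j}$ by Proposition~\ref{02 of 15 may 2022}. Hence $\upphi$ is injective (and $\uppsi$ is surjective, which also follows from Corollary~\ref{04 of 15 may 2022}).

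It remains to prove that $\upphi$ is surjective, i.e.\ that every generator $\gen_{i,j}$ with $1\le i\le j\le n$ lies in $\operatorname{im}\upphi$. Applying $\upphi$ to $A_{i,j+1}$ (legitimate since $j\le n$) gives $\upphi(A_{i,j+1})=\gen_{i,j-1}^{-1}\gen_{i,j}\gen_{i+1,j-1}\gen_{i+1,j}^{-1}$, hence
\[
\gen_{i,j}=\gen_{i,j-1}\cdot\upphi(A_{i,j+1})\cdot\gen_{i+1,j}\cdot\gen_{i+1,j-1}^{-1}.
\]
I would then induct on the right endpoint $j$, and within each fixed $j$ induct downward on the left endpoint $i$ from $i=j$ to $i=1$. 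The base case $i=j$ reads $\gen_{j,j}=\upphi(A_{j,j+1})$, the other three factors being $1$ by the convention $\gen_{a,b}=1$ for $b<a$; in the inductive step $\gen_{i,j-1}$ and $\gen_{i+1,j-1}$ have smaller right endpoint and $\gen_{i+1,j}$ has the same right endpoint but larger left endpoint, so all three already lie in $\operatorname{im}\upphi$, whence so does $\gen_{i,j}$. Therefore $\upphi$ is an isomorphism and $H\cong G\cong\PB_{A_n}$, which is the asserted presentation.

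The genuinely hard content is not in this argument but in its inputs: Proposition~\ref{3 of 29 january 2022}, which verifies that the commutator and box relations really do hold among the $\ell_{\scrA}^{2}$, and above all Lemma~\ref{lem:well defined}, the long case-by-case check that $\upphi$ is well defined — that each of the five families of Artin relations in $R_1$ is killed in $H$ — which is where the asymmetric classical generators must be reconciled with the symmetric box relations. Given those, the proof of the theorem itself is essentially formal; the only points needing care are the ordering used in the surjectivity induction and the bookkeeping that relation (2) of the statement is verbatim Proposition~\ref{3 of 29 january 2022}(3) after the substitution above.
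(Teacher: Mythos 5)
Your proposal is correct and follows essentially the same route as the paper: the same maps $\upphi$ and the evaluation $\gen_{i,j}\mapsto\ell_{i,j}^{2}$, injectivity via $\uppsi\circ\upphi=\mathrm{id}$ using Proposition~\ref{02 of 15 may 2022} and Corollary~\ref{04 of 15 may 2022}, and surjectivity from the identity $\gen_{i,j}=\gen_{i,j-1}\,\upphi(A_{i,j+1})\,\gen_{i+1,j}\,\gen_{i+1,j-1}^{-1}$. The only (immaterial) difference is your induction ordering (on the right endpoint $j$, then downward on $i$) versus the paper's induction on $j-i$; both are valid.
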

\begin{proof}
Consider the homomorphism $\upphi\colon G\to H$ defined in Lemma~\ref{lem:well defined}. By Proposition \ref{3 of 29 january 2022}, there is also a homomomorphism \(H\twoheadrightarrow \PB_{A_{n}} \) sending \(\gen_{i,j} \mapsto \ell_{i,j}^{2},\)  which is surjective by Corollary  \ref{04 of 15 may 2022}. By Proposition~\ref{02 of 15 may 2022}, by chasing the generators $A_{ij}$ in both directions, the following diagram commutes
\[
\begin{tikzpicture}[bend angle=20, looseness=1]
\node (a) at (-2,0) [] {$H$};
\node (b) at (0,0) [] {\(\mathrm{\PB}_{A_{n}}\)};
\node (C2) at (0,2) [] {$G$};
\draw[transform canvas={yshift=0ex},->>] (a) --node[below] {}  (b) ;
\path[->,font=\scriptsize,>=angle 90] ;
\draw[transform canvas={yshift=0ex}, <-] (b) --node[right] {$ \cong $} (C2); 
\draw[,<-] (a) --node[above left] {$\scriptstyle \upphi$} (C2);
\end{tikzpicture}
\]  
and so it follows that $\upphi$ is injective.   We will now prove that $\upphi$ is also surjective, by establishing that every generator $\gen_{i,j}$ of $H$ is in the image of $\upphi$ using induction on $j-i$.

If $j-i=0$ then by convention $\upphi(A_{i,i+1})=\gen_{i,i}$, and so $\gen_{i,i}$ belongs to the image.  Thus we can consider an arbitrary $\gen_{i,j}$ and assume that all $\gen_{a,b}$ are in the image of $\upphi$ whenever $b-a<j-i$.  Now
\[
\upphi(A_{i,j+1})=\gen_{i,j-1}^{-1}\, \gen_{i,j}^{\phantom 2} \,   \gen_{i+1,j-1}^{\phantom 2} \, \gen_{i+1,j}^{-1},
\] 
and by hypothesis there exists $g_1,g_2,g_3\in G$ such that $\upphi(g_1)=\gen_{i,j-1}$, $\upphi(g_2)=\gen_{i+1,j-1}$ and $\upphi(g_3)=\gen_{i+1,j}$.  But then $\upphi(g_1A_{i,j+1}g_3g_2^{-1})=\gen_{i,j}$, and so $\upphi$ is surjective.\end{proof}

A comparison with other presentations in the literature requires us to reindex.  For any pair $ij$ with $1\leq i<j\leq n+1$ we can consider the connected subset $[i,j-1]$ of $A_n$.  This gives a bijection between such pairs, and connected subsets.

In this way, $\PB_{A_{n}}$ is generated by pairs $ij$ such that $1\leq i<j\leq n+1$, and as is standard we draw this as a triangle,  where the case of $A_n$ with $n=8$ is illustrated below. 
\[
\begin{tikzpicture}[scale=1.5]
\node at (0,0) {$\scriptstyle 12$};
\node at (0.5,0) {$\scriptstyle 23$};
\node at (1,0) {$\scriptstyle 34$};
\node at (1.5,0) {$\scriptstyle 45$};
\node at (2,0) {$\scriptstyle 56$};
\node at (2.5,0) {$\scriptstyle 67$};
\node at (3,0) {$\scriptstyle 78$};
\node at (3.5,0) {$\scriptstyle 89$};
%

\node at (0.25,-0.25)  {$\scriptstyle 13$};
\node at (0.75,-0.25)  {$\scriptstyle 24$};
\node at (1.25,-0.25)  {$\scriptstyle 35$};
\node at (1.75,-0.25)  {$\scriptstyle 46$};
\node at (2.25,-0.25)  {$\scriptstyle 57$};
\node at (2.75,-0.25)  {$\scriptstyle 68$};
\node at (3.25,-0.25)  {$\scriptstyle 79$};
\node at (0.5,-0.5) {$\scriptstyle 14$};
\node at (1,-0.5) {$\scriptstyle 25$};
\node at (1.5,-0.5) {$\scriptstyle 36$};
\node at (2,-0.5) {$\scriptstyle 47$};
\node at (2.5,-0.5) {$\scriptstyle 58$};
\node at (3,-0.5) {$\scriptstyle 69$};
\node at (0.75,-0.75)  {$\scriptstyle 15$};
\node at (1.25,-0.75)  {$\scriptstyle 26$};
\node at (1.75,-0.75)  {$\scriptstyle 37$};
\node at (2.25,-0.75)  {$\scriptstyle 48$};
\node at (2.75,-0.75)  {$\scriptstyle 59$};

%
\node at (1,-1) {$\scriptstyle 16$};
\node at (1.5,-1) {$\scriptstyle 27$};
\node at (2,-1) {$\scriptstyle 38$};
\node at (2.5,-1) {$\scriptstyle 49$};
\node at (1.25,-1.25)  {$\scriptstyle 17$};
\node at (1.75,-1.25)  {$\scriptstyle 28$};
\node at (2.25,-1.25)  {$\scriptstyle 39$};
\node at (1.5,-1.5) {$\scriptstyle 18$};
\node at (2,-1.5) {$\scriptstyle 29$};
\node at (1.75,-1.75)  {$\scriptstyle 19$};			
\end{tikzpicture}
\]
In this notation, Theorem \ref{Feb 2022}  translate into the following.	
\begin{cor}\label{07 of 15 may 2022}
The $\PB_{A_{n}}$ is generated by $ \{z_{ij} \mid 1\leq i<j\leq n+1 \}$ subject to the relations
\begin{enumerate}
\item $z_{ij}z_{kl}=z_{kl}z_{ij}$ if $k\geq j+2$, or $k\leq i\leq j \leq l $ or   $l\leq k\leq l \leq j $.
\item  There is an equality \[z_{iy}^{-1}(z_{ij}z_{xy}z_{j+1k})z_{xk}^{-1}=z_{xk}^{-1}(z_{j+1k}z_{xy}z_{ij})z_{iy}^{-1}\]
whenever 	\(1\leq i<x\leq j   <y  < k \leq n+1.\)	
\end{enumerate} 
\end{cor}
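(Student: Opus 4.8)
The plan is to recognise that Corollary~\ref{07 of 15 may 2022} is exactly Theorem~\ref{Feb 2022} transported through the bijection recalled above, which sends a pair $ij$ with $1\le i<j\le n+1$ to the connected subgraph $[i,j-1]\subseteq A_n$; under this correspondence one sets $z_{ij}\colonequals\ell_{i,j-1}^{2}=[i,j-1]$, so it suffices to check that each of the two families of relations in Theorem~\ref{Feb 2022} is carried, term by term, onto the corresponding family in the statement. First I would record the elementary dictionary for connected subgraphs: $[i,j-1]\subseteq[k,l-1]$ iff $k\le i$ and $j\le l$; if $[i,j-1]$ and $[k,l-1]$ are disjoint with the latter to the right, their distance is $k-(j-1)$; and a union $[i,j-1]\cup[k,l-1]$ is again an interval precisely when the two overlap or are adjacent, in which case it equals $[\min(i,k),\max(j,l)-1]$.

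For the commutator relations, I would apply this dictionary directly: the three alternatives in Theorem~\ref{Feb 2022}(1) — $d(\scrA,\scrB)\ge 2$, $\scrA\subseteq\scrB$, $\scrB\subseteq\scrA$ — translate, up to exchanging the names of the two pairs, into the three clauses of Corollary~\ref{07 of 15 may 2022}(1), the only inputs being the distance formula $d=k-(j-1)$ for disjoint intervals and the endpoint description of containment just recorded. So the commutator relations match.

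For the box relations I would start from Notation~\ref{03 april 2022}: given $\scrA,\scrC$ with $d(\scrA,\scrC)=2$ and $\scrB$ compatible, write $\scrA=[i,j]$, $\scrC=[j+2,p]$ and $\scrB=[a,k]$ with $1\le i<a\le j+1\le k<p\le n$. The compatibility inequalities force $\scrA$ and $\scrB$ to overlap or be adjacent (since $a\le j+1\le k$), and likewise $\scrB$ and $\scrC$ (since $a\le j+1$ and $k\ge j+1$), so that $\scrA\cup\scrB=[i,k]$ and $\scrB\cup\scrC=[a,p]$; under the bijection $\scrA=z_{i,j+1}$, $\scrB=z_{a,k+1}$, $\scrC=z_{j+2,p+1}$, while $\scrA\cup\scrB=z_{i,k+1}$ and $\scrB\cup\scrC=z_{a,p+1}$. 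Substituting these into Theorem~\ref{Feb 2022}(2) and then relabelling through the (invertible) shift $i'=i$, $x'=a$, $j'=j+1$, $y'=k+1$, $k'=p+1$ turns the hypothesis $1\le i<a\le j+1\le k<p\le n$ into the chain $1\le i'<x'\le j'<y'<k'\le n+1$ and turns the relation itself into
\[
z_{i'y'}^{-1}\big(z_{i'j'}\,z_{x'y'}\,z_{j'+1,k'}\big)z_{x'k'}^{-1}=z_{x'k'}^{-1}\big(z_{j'+1,k'}\,z_{x'y'}\,z_{i'j'}\big)z_{i'y'}^{-1},
\]
which is precisely Corollary~\ref{07 of 15 may 2022}(2); conversely every quintuple satisfying that chain arises in this way. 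Since Theorem~\ref{Feb 2022} already supplies the presentation, and the substitution is a bijection both on generators and on relations, the corollary follows.

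I expect the only genuine work to be the combinatorial bookkeeping in the last step — verifying that $\scrA\cup\scrB$ and $\scrB\cup\scrC$ really are the intervals $[i,k]$ and $[a,p]$, which uses the overlap/adjacency forced by compatibility rather than general position, and applying the index shift $[a,b]\leftrightarrow(a,b+1)$ consistently so that the inequality chain is transported without an off-by-one. There is no new group-theoretic content beyond Theorem~\ref{Feb 2022}.
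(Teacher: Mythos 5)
Your proposal is correct and is essentially the paper's own argument: Corollary~\ref{07 of 15 may 2022} is obtained from Theorem~\ref{Feb 2022} purely by the reindexing $[i,j-1]\leftrightarrow ij$, and your verification of the box relations via Notation~\ref{03 april 2022} (using $\scrA\cup\scrB=[i,k]$, $\scrB\cup\scrC=[a,p]$ and the shift $j'=j+1$, $y'=k+1$, $k'=p+1$, which transports $1\le i<a\le j+1\le k<p\le n$ onto $1\le i'<x'\le j'<y'<k'\le n+1$) is exactly the intended bookkeeping. One caveat: your own dictionary gives $d(\scrA,\scrB)\ge 2$ iff $k\ge j+1$, and $\scrB\subseteq\scrA$ iff $i\le k\le l\le j$, so the clauses of (1) as printed ($k\ge j+2$, and the degenerate ``$l\le k\le l\le j$'') do not literally coincide with your translation; you should state the corrected clauses rather than assert a verbatim match. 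This is more than pedantry: as printed, clause (1) omits the $k=j+1$ commutations (for instance $[z_{12},z_{34}]$ already in $A_3$), which are genuine relations coming from $d=2$ in Theorem~\ref{Feb 2022}(1) and are not obviously consequences of the remaining relations, so the literal printed statement should be read as containing an off-by-one typo that your translation in fact corrects.
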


\begin{remark}\label{01 of 08 july 2022}
Choose $\scrA=15$, i.e.\ $\scrA=[1,4]$, then the possible $\scrC$ are those on the dotted line.  For the choice $\scrC=69$, the subset $\scrB$ can be any of those in the box illustrated below. 
\[
\begin{tikzpicture}[scale=1.5]
\node at (0,0) {$\scriptstyle 12$};
\node at (0.5,0) {$\scriptstyle 23$};
\node at (1,0) {$\scriptstyle 34$};
\node at (1.5,0) {$\scriptstyle 45$};
\node at (2,0) {$\scriptstyle 56$};
\node at (2.5,0) {$\scriptstyle 67$};
\node at (3,0) {$\scriptstyle 78$};
\node at (3.5,0) {$\scriptstyle 89$};
\draw[densely dotted] (2.5,0) -- (3,-0.5);

\node at (0.25,-0.25)  {$\scriptstyle 13$};
\node at (0.75,-0.25)  {$\scriptstyle 24$};
\node at (1.25,-0.25)  {$\scriptstyle 35$};
\node at (1.75,-0.25)  {$\scriptstyle 46$};
\node at (2.25,-0.25)  {$\scriptstyle 57$};
\node at (2.75,-0.25)  {$\scriptstyle 68$};
\node at (3.25,-0.25)  {$\scriptstyle 79$};
\node at (0.5,-0.5) {$\scriptstyle 14$};
\node at (1,-0.5) {$\scriptstyle 25$};
\node at (1.5,-0.5) {$\scriptstyle 36$};
\node at (2,-0.5) {$\scriptstyle 47$};
\node at (2.5,-0.5) {$\scriptstyle 58$};
\node at (3,-0.5) {$\scriptstyle 69$};
\draw (3,-0.5) circle (4pt);
\draw (0.75,-0.75) circle (4pt);
\node at (0.75,-0.75)  {$\scriptstyle 15$};
\node at (1.25,-0.75)  {$\scriptstyle 26$};
\node at (1.75,-0.75)  {$\scriptstyle 37$};
\node at (2.25,-0.75)  {$\scriptstyle 48$};
\node at (2.75,-0.75)  {$\scriptstyle 59$};

\draw (1,-0.75) -- (1.75,-1.5) -- (2.75,-0.5) -- (2,0.25 ) -- cycle;
\node at (1,-1) {$\scriptstyle 16$};
\node at (1.5,-1) {$\scriptstyle 27$};
\node at (2,-1) {$\scriptstyle 38$};
\node at (2.5,-1) {$\scriptstyle 49$};
\node at (1.25,-1.25)  {$\scriptstyle 17$};
\node at (1.75,-1.25)  {$\scriptstyle 28$};
\node at (2.25,-1.25)  {$\scriptstyle 39$};
\node at (1.5,-1.5) {$\scriptstyle 18$};
\node at (2,-1.5) {$\scriptstyle 29$};
\node at (1.75,-1.75)  {$\scriptstyle 19$};
\end{tikzpicture}		
\]		
This justifies calling the length five relations the box relations, since each relation is characterized by a choice of an element in the box.	
\end{remark}
\begin{example}
\(\PB_{A_{4}}\) has as generators
\[
\begin{array}{c}
\begin{tikzpicture}[scale=1.5]
	\node at (0,0) {$\scriptstyle 12$};
	\node at (0.5,0) {$\scriptstyle 23$};
	\node at (1,0) {$\scriptstyle 34$};
	\node at (1.5,0) {$\scriptstyle 45$};
	\node at (0.25,-0.25)  {$\scriptstyle 13$};
	\node at (0.75,-0.25)  {$\scriptstyle 24$};
	\node at (1.25,-0.25)  {$\scriptstyle 35$};
	\node at (0.5,-0.5) {$\scriptstyle 14$};
	\node at (1,-0.5) {$\scriptstyle 25$};
	\node at (0.75,-0.75)  {$\scriptstyle 15$};
\end{tikzpicture}		
\end{array}
=
\begin{array}{c}
\begin{tikzpicture}[scale=1.5]
	\node at (0,0) {$\scriptstyle a$};
	\node at (0.5,0) {$\scriptstyle b$};
	\node at (1,0) {$\scriptstyle c$};
	\node at (1.5,0) {$\scriptstyle d$};
	\node at (0.25,-0.25)  {$\scriptstyle e$};
	\node at (0.75,-0.25)  {$\scriptstyle f$};
	\node at (1.25,-0.25)  {$\scriptstyle g$};
	\node at (0.5,-0.5) {$\scriptstyle h$};
	\node at (1,-0.5) {$\scriptstyle i$};
	\node at (0.75,-0.75)  {$\scriptstyle j$};
\end{tikzpicture}		
\end{array} 
\]
As a slight abuse of notation in the sense of the introduction \(\ell_{ij}^{2}=[i,j]\). There are \(29\)  commutator relations, together with the 6 box relations
\begin{align*}
e^{-1}abcf^{-1}&=f^{-1}cbae^{-1}\\
f^{-1}bcdg^{-1}&=g^{-1}dcbf^{-1}\\
e^{-1}abgi^{-1}&=i^{-1}gbae^{-1}\\
h^{-1}afgi^{-1}&=i^{-1}gfah^{-1}\\
h^{-1}ecdg^{-1}&=g^{-1}dceh^{-1}\\
h^{-1}efdi^{-1}&=i^{-1}dfeh^{-1}.
\end{align*}
\end{example}
\subsection{Other Coxeter Types}
This section explains that  pure braid groups of other Coxeter arrangements are not in general generated by the analogue of \(\ell_{ij}^{2}.\)
Consider the Dynkin diagram
\[
\begin{tikzpicture}[scale=1.5] 
\node (A) at (-2,0) {$I_{n}$};
\node (B) at (-1.5,0) {\colonequals};
\node (C) at (-1,0) {$\circ$};
\node (D) at (0,0) {$\circ$};
\path[-,font=\scriptsize,>=angle 90] 
(C) edge node[above]{n} (D);				
\end{tikzpicture}
\]
with associated braid group \[
B_{I_{n}}\colonequals \left\langle s_{1}, s_{2} \left|
\begin{array}{l}
\underbrace{s_{1}s_{2}\hdots}_{n} =\underbrace{s_{2}s_{1}\hdots}_{n} 
\end{array}
\right.\right\rangle
\] and Weyl group \[W_{I_{n}}\colonequals \left\langle s_{1}, s_{2} \left|
\begin{array}{ll}
	s^{2}_{1}=s^{2}_{2}=1\\
\underbrace{s_{1}s_{2}\hdots}_{n} =\underbrace{s_{2}s_{1}\hdots}_{n}
\end{array}
\right.\right\rangle \cong D_{2n}, \]  where $D_{2n}$ is the dihedral group  of order $2n$. The pure braid group $\PB_{I_{n}}$  associated to the corresponding finite Coxeter group is still  the kernel of the natural map $ \upphi \colon B_{I_{n}} \twoheadrightarrow W_{I_{n}}, $ and is isomorphic to \(\pi_{1}(\mathbb{C}^{n}\setminus \bigcup_{i=1}^n (H_i)_{\mathbb{C}})\) where \(\scrH=\{H_i\}_{i=1}^n\) is the corresponding reflection arrangement.

Now, in general consider $\PB_{\mathcal{H}}\colonequals\pi_{1}(\mathbb{C}^{n}\setminus \bigcup H_{\mathbb{C}})$ for any Coxeter  hyperplane  arrangement $\mathcal{H}$, where $\bigcup H_{\mathbb{C}}$ is the union of complexified hyperplanes.  By \cite[Proposition 2.2(2)]{BMR} the abelianization of \(\PB_{\mathcal{H}}\) is the free abelian group over  a set of hyperplanes. 

\begin{remark}
In \cite{DW3}, it is proved that the derived category of a flopping contraction \(X \to \Spec R\) carries
an action of a subgroup of the fundamental group of the complexified complement of an associated \(\mathcal{H}\),	without  knowledge of the group presentation.  This subgroup \(K\) is defined to be generated by monodromy around all walls (including those of high codimension) from any fixed chamber. In type \(A\), this corresponds to the \(\ell_{i,j}^{2}\) from the earlier sections.
\end{remark}
 Write $H_{1}, \ldots, H_{n}$ for the \(n\) hyperplanes in \(\mathbb{R}^{2}\) associated to the Dynkin diagram $I_{n}$. Starting from a given chamber, we make a choice on the generators of the pure braid group $\PB_{I_{n}}$ by  finding the shortest  way to loop around each of  the hyperplanes; see e.g.\  \cite[Proposition 2.2(1)]{BMR}.

\begin{cor}\label{cor:not gen}
The pure braid group  $\PB_{I_{n}}$ has at least $n$ generators, and so  \(K \neq \PB_{I_{n}}\) whenever \(n \geq 4\).
\end{cor}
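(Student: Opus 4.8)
The approach is to bound the minimal number of generators of $\PB_{I_n}$ from below, bound that of $K$ from above, and compare. For the lower bound I would invoke \cite[Proposition 2.2(2)]{BMR}, already recalled above: the abelianization of $\PB_{\mathcal{H}}$ is free abelian on the set of hyperplanes of $\mathcal{H}$. The reflection arrangement of $W_{I_n}\cong D_{2n}$ consists of the $n$ lines in $\mathbb{R}^2$ fixed by the $n$ reflections of $D_{2n}$, so $\PB_{I_n}^{\mathrm{ab}}\cong\mathbb{Z}^n$. Since the abelianization of an $m$-generated group is $m$-generated, and $\mathbb{Z}^n$ is not generated by fewer than $n$ elements, $\PB_{I_n}$ requires at least $n$ generators. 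This is the first assertion of the corollary.

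For the second assertion I would describe $K$ explicitly. The subgroup $K$ is generated by the monodromy loops around the faces, of all codimensions, of a single fixed chamber $C_0$; as in the type $A$ case, where this produces exactly the $\ell_{\scrA}^{2}$, these generators are indexed by the connected subgraphs of the Dynkin diagram, the one attached to $\scrA$ being the monodromy around the face $\bigcap_{i\in\scrA}H_i$. The diagram $I_n$ has just two nodes joined by a single edge, so its connected subgraphs are $\{s_1\}$, $\{s_2\}$ and $\{s_1,s_2\}=I_n$; geometrically, $C_0$ is a two-dimensional sector with two bounding rays (codimension one) and a single vertex at the origin (codimension two), again three faces. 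The associated elements are $s_1^{2}$, $s_2^{2}$ and $\ell_{I_n}^{2}$, where $\ell_{I_n}$ denotes the standard lift to $B_{I_n}$ of the longest element of $D_{2n}$, and each lies in $\PB_{I_n}$ because it maps to the identity in $W_{I_n}$. Hence $K=\langle s_1^{2},s_2^{2},\ell_{I_n}^{2}\rangle\leq\PB_{I_n}$ is generated by at most three elements. Combining with the first part, when $n\geq 4$ we have $n>3$, so $K$ is too small to equal $\PB_{I_n}$, and therefore $K\neq\PB_{I_n}$.

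The step I expect to be delicate is the explicit identification of $K$ in the second paragraph: one must be confident that "monodromy around all walls from the fixed chamber $C_0$" genuinely yields only the three elements above, rather than, say, a loop around each of the $n$ hyperplanes, which would make $K$ all of $\PB_{I_n}$. The resolution is exactly the observation, implicit already in type $A$, that these monodromy generators are indexed by the faces of $C_0$ — equivalently by connected subdiagrams — and that a two-dimensional cone has only three proper faces. Making this rigorous may require recalling from \cite{DW3} the precise definition of monodromy around a high-codimension wall and checking that, for the dihedral arrangement, the codimension-two case agrees with $\ell_{I_n}^{2}$; everything else is routine.
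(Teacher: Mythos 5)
Your argument is correct and follows the paper's proof essentially verbatim: the lower bound of $n$ comes from the abelianization being free abelian on the $n$ hyperplanes by \cite[Proposition 2.2(2)]{BMR}, while $K$ has at most $3$ generators because a chamber of a rank-two arrangement has exactly three walls (two of codimension one, one of codimension two). Your extra identification of the three generators as $s_1^2$, $s_2^2$, $\ell_{I_n}^2$ is a harmless elaboration of the same counting argument.
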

\begin{proof}
The number of generators of \(\PB_{I_{n}}\) is at least the number of generators for its abelianization. By \cite[Proposition 2.2(2)]{BMR} this is the number of hyperplanes, which is \(n.\) But $K$ has only $3$ generators since  the arrangement is in \(\mathbb{R}^{2}\)  so any chamber has only 3 walls: two of codimension one  and one of codimension two.
\end{proof}

	\end{document}